\numberwithin{equation}{section}
\newcommand{\Z}{\mathbb{Z}}
\newcommand{\R}{\mathbb{R}}
\newcommand{\C}{\mathbb{C}}
\newcommand{\mbS}{\mathbb{S}}
\newcommand{\Q}{\ms Q}
\newcommand{\B}{\ms B}
\newcommand{\mb}{\mathbf}
\newcommand{\mc}{\mathcal}
\newcommand{\ms}{\mathsf}
\newcommand{\delbar}{\bar \partial}
\newcommand{\del}{\partial}
\newcommand{\D}{\mathbb{D}}
\newcommand{\mbhat}[1]{\hat{\mb #1}}
\newcommand{\G}{G}
\newcommand{\reg}{\mathrm{reg}}
\newcommand{\gr}{\mathrm{gr}}
\newcommand{\AD}{\mathrm{AD}}
\newcommand{\Lip}{\mathrm{Lip}}
\renewcommand{\H}{\mathbb{H}}
\renewcommand{\Im}{\operatorname{Im}}
\renewcommand{\Re}{\operatorname{Re}}
\DeclareMathOperator{\supp}{supp}
\newtheorem{theorem}{Theorem}[section]
\newtheorem{corollary}[theorem]{Corollary}
\newtheorem{proposition}[theorem]{Proposition}
\newtheorem{lemma}[theorem]{Lemma}
\theoremstyle{definition}
\newtheorem{definition}[theorem]{Definition}
\newtheorem*{example*}{Example}
\newtheorem*{claim*}{Claim}
\theoremstyle{remark}
\newtheorem*{remark*}{Remark}
\newtheorem*{remarks*}{Remarks}
\DeclareMathOperator{\Err}{Err}
\DeclareMathOperator{\SL}{SL}
\def\namedlabel#1#2{\begingroup
    #2%
    \def\@currentlabel{#2}%
    \phantomsection\label{#1}\endgroup
}
\title{Fractal uncertainty in higher dimensions}
\author{Alex Cohen}
\thanks{Supported by an NSF Graduate Research Fellowship and a Hertz Foundation Fellowship. Partial support from NSF CAREER grant DMS-1749858 is acknowledged.}
\email{alexcoh@mit.edu}
\begin{document}

\begin{abstract}
We prove that if a fractal set in $\R^d$ avoids lines in a certain quantitative sense, which we call line porosity, then it has a fractal uncertainty principle. The main ingredient is a new higher dimensional Beurling--Malliavin multiplier theorem.
\end{abstract}

\maketitle

\section{Introduction}
\subsection{Main result}
A fractal uncertainty principle (FUP) says that a function cannot be localized to a fractal set in physical space and a fractal set in Fourier space at the same time. It has striking applications to quantum chaos---by applying FUP to fractal sets coming from chaotic dynamical systems, we can control high frequency waves on those systems. Bourgain and Dyatlov \cite{BourgainDyatlov} proved an FUP for sets in $\R$ satisfying a \textit{porosity} property, with applications to lower bounds for mass of eigenfunctions (Dyatlov, Jin,
and Nonnenmacher~\cites{DyatlovJinFullSupport,DyatlovJinNonnenmacher}), control for the Schr\"odinger equation and exponential decay for the damped wave equation~\cites{Jin-Control,Jin-DWE,DyatlovJinNonnenmacher}, and spectral gaps for open quantum systems (Dyatlov--Zahl and Dyatlov--Zworski \cites{DyatlovZahl,DyatlovZworski}).
See the surveys \cites{DyatlovSurvey,DyatlovJournees} for more details.

These results apply to surfaces because Bourgain and Dyatlov's FUP applies to subsets of $\R$. To show analogues for $d+1$ dimensional manifolds would need an FUP for subsets of $\R^d$. We prove such a result for any~$d\geq 1$, see below for definitions used ($h < 1/100$ denotes a small parameter):
\begin{theorem}\label{thm:FUP_higher_dim}
Let $\nu>0$ and assume that
\begin{itemize}
    \item $\mb X \subset [-1, 1]^d$ is $\nu$-porous on balls from scales $h$ to $1$, and 
    \item $\mb Y \subset [-h^{-1},h^{-1}]^d$ is $\nu$-porous on lines from scales $1$ to $h^{-1}$. 
\end{itemize}
Then there exist $\beta, C > 0$ depending only on $\nu$ and $d$ such that for all $f \in L^2(\R^d)$ 
\begin{equation}\label{eq:higher_dim_FUP_estimate}
    \supp \hat f \subset \mb Y\, \Longrightarrow\, \| f 1_{\mb X}\|_2 \leq C\, h^{\beta} \| f \|_2.
\end{equation}
\end{theorem}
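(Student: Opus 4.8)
The plan is to follow the strategy of Bourgain and Dyatlov that underlies Theorem~\ref{thm:FUP_1D}, with the one-dimensional Beurling--Malliavin multiplier theorem replaced by a higher-dimensional version. After the standard semiclassical rescaling $x\mapsto x/h$, the estimate \eqref{eq:higher_dim_FUP_estimate} becomes an operator norm bound $\|1_{\mb X}\,\mc F_h^{-1}\,1_{\mb Y}\,\mc F_h\|_{L^2\to L^2}\le C h^\beta$ for the semiclassical Fourier transform $\mc F_h$, where now $\mb X$ and $\mb Y$ both live in $[-1,1]^d$ and are porous from scales $h$ to $1$, $\mb X$ on balls and $\mb Y$ (rescaled to $h\mb Y$) on lines; a routine almost-orthogonality and mollification step lets one work with smooth frequency cutoffs. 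One is tempted to deduce this from Theorem~\ref{thm:FUP_1D} by slicing: the projection-slice identity $\widehat f(\xi\omega)=\widehat{P_\omega f}(\xi)$ shows that $\supp\widehat f\subset\mb Y$ forces $\supp\widehat{P_\omega f}$ into the one-dimensional slice $\{\xi:\xi\omega\in\mb Y\}$, which is $1$-dimensionally porous precisely because $\mb Y$ is porous on lines. But this breaks down on the physical side: the projection $P_\omega(\mb X)$ of a set that is only ball-porous need not be porous (a Sierpinski-carpet-type $\mb X$ projects onto a full interval), so a genuinely $d$-dimensional argument is unavoidable.

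The key new tool is a Beurling--Malliavin multiplier theorem in $\R^d$: given $\mb Y\subset[-K,K]^d$ that is $\nu$-porous on lines from scales $1$ to $K$, construct an entire function $\Phi$ on $\C^d$ of exponential type at most $\varepsilon=\varepsilon(\nu,d)$, so that (by Paley--Wiener) $\supp\widehat\Phi\subset\B_{\varepsilon}(0)$, with $\Phi\not\equiv 0$, $|\Phi|\le 1$ on $\R^d$, on which $-\log|\Phi|$ is large on the $1$-neighborhood of $\mb Y$ while $\Phi$ still obeys the $d$-dimensional logarithmic-integral bound $\int_{\R^d}\frac{-\log|\Phi(\xi)|}{(1+|\xi|)^{d+1}}\,d\xi\le C(\nu,d)$. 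I would build $U:=\log|\Phi|$ first as a plurisubharmonic function on the tube $\R^d+i\,\B_{\varepsilon}(0)\subset\C^d$: along each real line $\ell$, the line porosity of $\mb Y$ supplies a multiscale gap structure for $\mb Y\cap\ell$, and the classical one-dimensional Beurling--Malliavin regularization produces the right subharmonic profile on $\ell$; the problem is to make these profiles compatible across all lines and all directions. Having produced such a plurisubharmonic weight, one passes to an honest holomorphic $\Phi$ by solving a $\delbar$-equation with H\"ormander's $L^2$ estimate against the weight $U$, the smallness of $\varepsilon$ guaranteeing that the correction keeps $\Phi$ of the prescribed exponential type.

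The passage from lines to $\R^d$---globally gluing the direction-by-direction Beurling--Malliavin constructions into a single plurisubharmonic function---is the step I expect to be the main obstacle. I would attack it via a Whitney-type decomposition of $[-K,K]^d$ adapted to the porosity hierarchy of $\mb Y$: on each dyadic block one records, for the directions in an $\varepsilon$-net of $S^{d-1}$, the gap of $\mb Y$ furnished by line porosity; since $\mb Y$ is a single fixed set these gaps overlap consistently across neighboring blocks and nearby directions, so the local subharmonic pieces patch together up to a bounded additive error, which one then absorbs into the $\delbar$-correction. An alternative closer to Bourgain--Dyatlov is to avoid $\C^d$ altogether, prove a single-scale version of the multiplier estimate directly, and iterate it across the $\sim\log_{1/\nu}(1/h)$ scales of porosity, taking an infinite product of single-scale multipliers of summable exponential type.

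Finally, granting the multiplier $\Phi$, the deduction of \eqref{eq:higher_dim_FUP_estimate} proceeds as in Bourgain--Dyatlov: for $f$ with $\supp\widehat f\subset\mb Y$ one uses $\Phi$ and its small Fourier support to trade the sharp spectral condition for an estimate that decomposes over the gap structure of $\mb Y$, gaining a fixed factor at each scale; combining this with the ball porosity of $\mb X$---which enters only through a crude volume bound on the $h$-neighborhood of $\mb X$---and iterating over the porosity scales yields \eqref{eq:higher_dim_FUP_estimate} with $\beta$ and $C$ depending only on $\nu$ and $d$.
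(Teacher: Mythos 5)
Your high-level architecture (a higher-dimensional Beurling--Malliavin multiplier built from a plurisubharmonic weight plus a H\"ormander $\delbar$-correction, then an iteration over scales) is the same skeleton as the paper, but the proposal leaves the central difficulty unsolved and the patch you suggest for it would not work. The heart of the matter is \ref{BM_higher_dim_step:1}: producing a single \emph{global} plurisubharmonic function on $\C^d$ lying below a weight adapted to the line-porous set $\mb Y$. Your plan is to run the classical one-dimensional regularization on each line and then glue the line-by-line subharmonic profiles ``up to a bounded additive error, which one then absorbs into the $\delbar$-correction.'' Plurisubharmonicity is a pointwise sub-mean-value constraint; it is not preserved under Whitney-type patching by cutoffs, and H\"ormander's theorem (Theorem \ref{thm:hormander_delbar}) requires a genuinely (indeed strictly) plurisubharmonic weight as input, so there is no mechanism by which a failure of plurisubharmonicity can be ``absorbed'' at the $\delbar$ stage. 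The paper's resolution is quite different: it writes down one explicit global weight $\omega$ on $\R^d$ (bump functions on cubes covering dyadic annuli, of depth $\sim 2^k/k^{\alpha}$ on $\mb Y$), extends it to $\C^d$ by the operator (\ref{eq:defn_of_extension_op}) which harmonically extends along \emph{every} complex line simultaneously, and shows $E\omega + C|\mb y|$ is plurisubharmonic only under two line-integral conditions (Proposition \ref{prop:Ew_is_psh}); since the second condition fails for the natural weight, the key step is the modification $\omega \to \widetilde\omega$ of \S\ref{sec:modifying_weight_funcs}, arranging each dyadic piece to have constant weighted spherical projection so that the second-derivative integrals over lines telescope to the summable constants $q_k$. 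Nothing in your proposal plays the role of this cancellation mechanism, and your $d$-dimensional logarithmic-integral condition $\int (-\log|\Phi|)(1+|\xi|)^{-d-1}$ is too weak to encode line avoidance; the paper's growth condition (\ref{eq:higher_dim_BM_growth_cond}) is a supremum over directions of integrals along lines, which is where line porosity actually enters (via Corollary \ref{cor:line_porous_small_intersection}).

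There is a second gap in the deduction of the FUP from the multiplier. You assert that the scale-by-scale gain comes from the gap structure of $\mb Y$ and that the ball porosity of $\mb X$ ``enters only through a crude volume bound on the $h$-neighborhood of $\mb X$.'' This inverts the mechanism and cannot close the argument: in the supercritical regime $\delta+\delta'>d$ a volume bound on $\mb X$ gives nothing (cf.\ (\ref{e:trivial-bound})). In the Bourgain--Dyatlov/Han--Schlag scheme the multiplier (damping function) for $\mb Y$ is used to prove a quantitative unique continuation principle, which then shows that at \emph{each} scale $h<r<1$ the function keeps a fixed fraction of its mass in the holes of $\mb X$; it is the iteration of this single-scale estimate over the porosity scales of $\mb X$ that produces $h^{\beta}$. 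The paper imports exactly this machinery as Theorem \ref{thm:FUP_conditional_damping} and reduces Theorem \ref{thm:FUP_higher_dim} to constructing damping functions for rescaled translates of $\mb Y$ (Proposition \ref{prop:existence_damping}); if you do not invoke such a result you would need to reprove the unique continuation and iteration steps, which your sketch does not address.
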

One could remove the hypothesis that $\mb X \subset [-1,1]^d$ and $\mb Y \subset [-h^{-1},h^{-1}]^d$ using almost orthogonality in a similar way to~\cite{DyatlovJinNonnenmacher}*{Proposition 2.9}.
Porosity on lines is a stronger condition than porosity on balls and it is needed because of a counterexample in dimensions $d\geq 2$, see \eqref{eq:orthogonal_lines}. We believe that this is a natural assumption which can be established in applications.
We prove Theorem \ref{thm:FUP_higher_dim} by combining previous work of Han and Schlag \cite{HanSchlag} with a higher dimensional version of the Beurling--Malliavin multiplier theorem (Theorem \ref{thm:higher_dim_BM} below). This multiplier theorem is the main new ingredient, the proof involves an explicit construction of certain plurisubharmonic functions and H\"ormander's theorem on solvability of the $\delbar$ equation. The core of this paper is about constructing plurisubharmonic functions.

\subsection{Porosity and the one-dimensional case}

We say a set $\mb X \subset \R^d$ is \textit{$\nu$-porous on balls} from scales $\alpha_0$ to $\alpha_1$ if for every ball $\B$ of diameter $\alpha_0 < R < \alpha_1$ there is some $\mb x \in  \B$ such that $\B_{\nu R}(\mb x) \cap \mb X = \emptyset$. Here $\B_{\nu R}(\mb x)$ is the radius $\nu R$-ball about $\mb x$.
Similarly, we say a set $\mb X$ is \textit{$\nu$-porous on lines from scales $\alpha_0$ to $\alpha_1$} if for all line segments $\tau$ with length $\alpha_0 < R < \alpha_1$, there is some $\mb x \in \tau$ such that $\B_{\nu R}(\mb x) \cap \mb X = \emptyset$.\footnote{Directional porosity is an existing notion similar to line porosity, see Chousionis's paper \cite{Chousionis}.}
We always assume $\nu \leq 1/3$. For subsets of $\R$ porosity on balls is the same as porosity on lines, and we just say a set is \textit{porous}. 

\begin{figure}
\centering
\begin{minipage}[c]{.45\linewidth}
  \centering
  \includegraphics[height=2in]{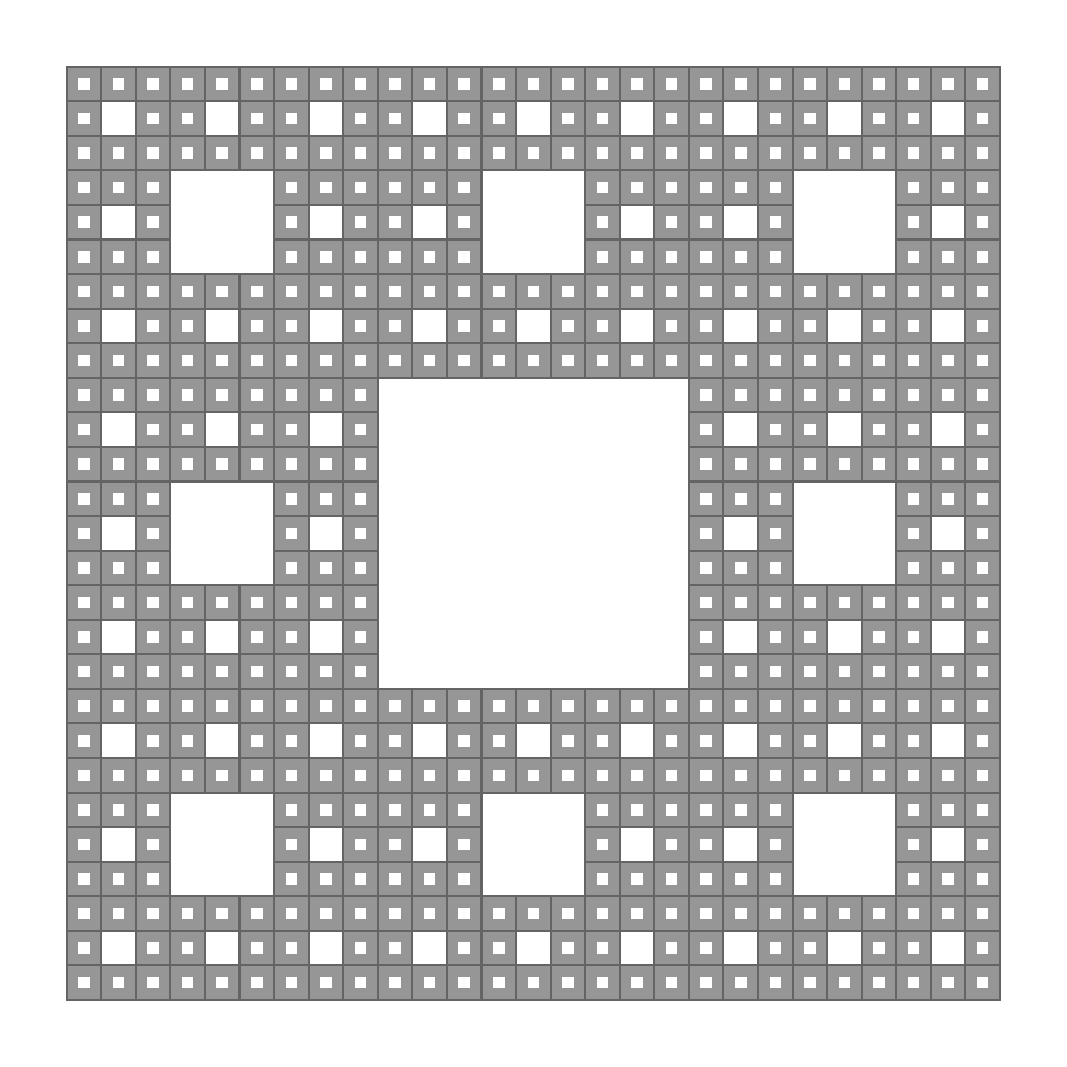}
  \captionsetup{width=\linewidth}
  \caption{The Sierpinski carpet is porous on balls but not porous on lines}
  \label{fig:sierpinski_carpet}
\end{minipage}
\hspace{0.3in}
\begin{minipage}[c]{.45\linewidth}
  \centering
  \includegraphics[height=2in]{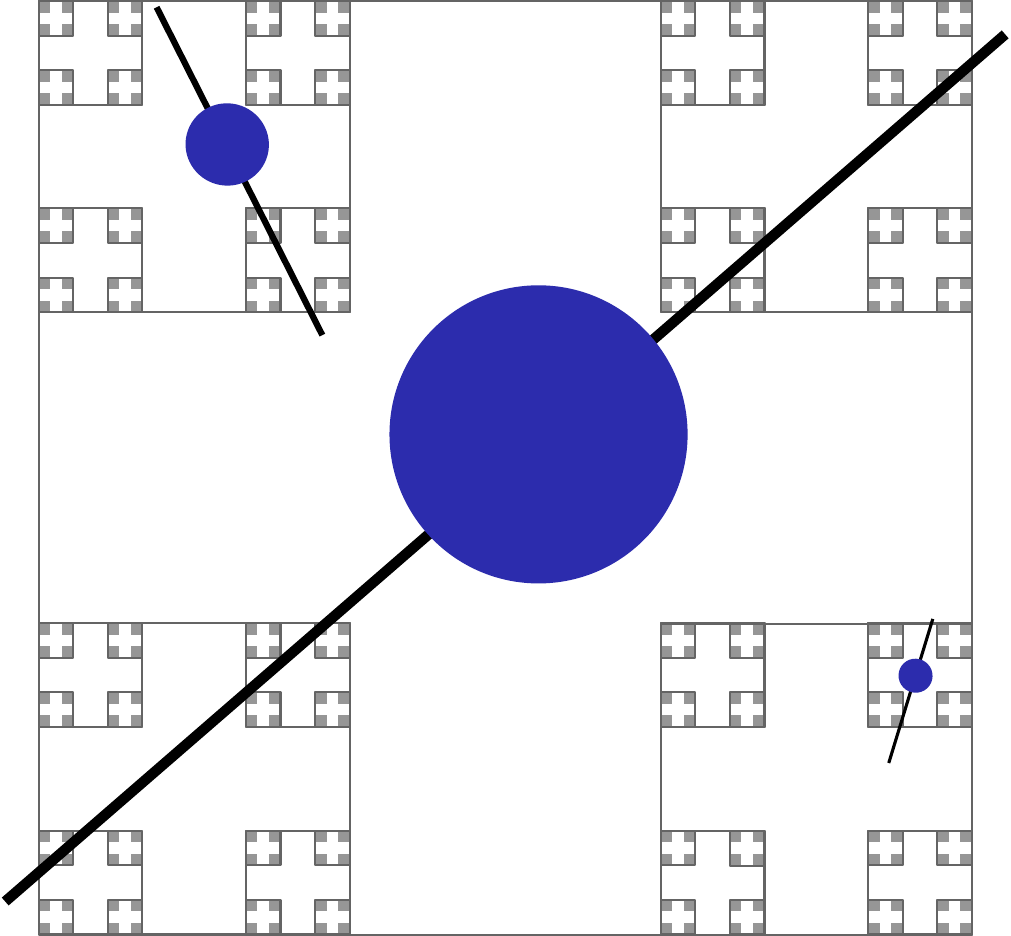}
  \captionsetup{width=\linewidth}
  \caption{The product of two middle thirds Cantor sets is porous on lines}
  \label{fig:prod_middle_thirds}
\end{minipage}
\end{figure}

Porosity on lines is the stronger condition. For example, any line is porous on balls but not porous on lines. See Figure \ref{fig:sierpinski_carpet} for another set which is porous on balls but not lines, and Figure \ref{fig:prod_middle_thirds} for a set which is porous on lines.
We now state Bourgain and Dyatlov's main theorem, which is the one dimensional case of Theorem \ref{thm:FUP_higher_dim}.
\begin{theorem}[Bourgain-Dyatlov]\label{thm:FUP_1D}
Let 
\begin{itemize}
    \item $\mb X \subset [-1,1]$ be $\nu$-porous from scales $h$ to $1$, and
    \item $\mb Y \subset [-h^{-1},h^{-1}]$ be $\nu$-porous from scales $1$ to $h^{-1}$.
\end{itemize}
There exists $\beta, C > 0$ depending only on $\nu$ such that for all $f \in L^2(\R)$
\begin{equation}
    \supp \hat f \subset \mb Y \, \Longrightarrow\, \| f 1_{\mb X} \|_{2} \leq Ch^{\beta}\, \| f\|_2.
\end{equation}
\end{theorem}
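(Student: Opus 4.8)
The plan is to reduce the FUP to a norm bound for the operator $T = 1_{\mb X}\,\mc F^{-1}1_{\mb Y}\,\mc F$ on $L^2(\R)$: since $\supp\hat f\subset\mb Y$, the desired conclusion is exactly $\|Tf\|_2\le Ch^\beta\|f\|_2$ for all $f$, and by a $TT^*$ argument this is equivalent to controlling $\|1_{\mb X}P_{\mb Y}1_{\mb X}\|_{L^2\to L^2}$, where $P_{\mb Y}$ is the Fourier multiplier by $1_{\mb Y}$. After rescaling frequency by $h$ (replacing $\mb Y$ by $h\mb Y\subset[-1,1]$), both $\mb X$ and $h\mb Y$ are $\nu$-porous from scale $h$ to scale $1$, and the problem becomes symmetric in physical and frequency space; this is the semiclassical normalization in which I would run the argument, thinking of $T$ as a rescaled Fourier integral operator whose symbol is cut off to two porous sets.

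The engine is the Beurling--Malliavin multiplier theorem applied to weights manufactured from the porous structure. Given $\mb X$, I would build $\omega_{\mb X}\colon\R\to[1,\infty)$ equal to $1$ on a neighborhood of $\mb X$ and growing like $\exp\!\big(c\,\ell/\dist(\cdot,\partial I)\big)$ across each complementary gap $I$ of length $\ell$, so that $\log\omega_{\mb X}$ is Lipschitz with controlled Poisson balayage. The crucial point is that $\nu$-porosity from scale $h$ to $1$ forces
\begin{equation}
    \int_{\R}\frac{\log\omega_{\mb X}(x)}{1+x^2}\,dx\ \le\ C(\nu),
\end{equation}
uniformly in $h$ --- the gaps, organized dyadically, contribute a geometric series whose ratio is governed by $\nu$. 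The multiplier theorem then yields, for any prescribed $a>0$, a nonzero $\psi$ with $\supp\hat\psi\subset[-a,a]$ and $|\psi|\,\omega_{\mb X}\le 1$ pointwise: $\psi$ is genuinely band-limited, is $O(1)$ near $\mb X$, and is superpolynomially small deep inside every large gap. Using $\psi$ (and, on the frequency side, the analogous multiplier for $\mb Y$) one replaces the sharp cutoffs $1_{\mb X},1_{\mb Y}$ by smooth analytic weights, trading a genuinely band-limited object against the complementary band-limitedness of $f$; it is this interplay that produces a quantitative improvement over the trivial bound at each dyadic scale.

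To upgrade a single-scale gain to the power $h^\beta$, I would set up a submultiplicative-type recursion across scales: writing $A(h)$ for the best constant in the rescaled inequality over all admissible porous pairs, and splitting the range $[h,1]$ at $\sqrt h$ --- so that a set $\nu$-porous from $h$ to $1$ restricts (after rescaling) to sets $\nu$-porous from $h$ to $\sqrt h$ and from $\sqrt h$ to $1$ --- one expects a bound of the form $A(h)\le C\,A(\sqrt h)^{2}$. It then suffices to establish $A(h_0)<1$ for a single $h_0=h_0(\nu)$, which is what the multiplier construction delivers once $a$ is taken small compared with $\nu$; iterating the recursion produces $A(h)\le C h^{\beta}$ with $\beta,C$ depending only on $\nu$. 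Alternatively the same $\log(1/h)$-fold gain can be packaged directly into the weight, each of the $\sim\log(1/h)$ scales of porosity contributing a multiplicative factor $(1-c\nu)$.

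I expect the main obstacle to be the multiplier construction and the bookkeeping around it: verifying the Poisson-integral bound with the right dependence on $\nu$, and --- since $\hat\psi$ compactly supported precludes compact support of $\psi$ --- showing that the far-from-$\mb X$ contribution of $\psi$ is negligible, so that ``supported near $\mb X$'' can be used quantitatively. A secondary difficulty is routing the deduction so that the lossy steps (Cauchy--Schwarz and convolution estimates relating $\|\psi f\|_2$ to $\|\hat\psi\|_1$ and to the local density of $\mb Y$) do not swamp the gain from band-limitedness, and checking that the smoothing does not destroy porosity when scales are split in the iteration.
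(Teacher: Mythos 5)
You have assembled the right toolbox---Beurling--Malliavin (Theorem \ref{thm:BM}) with a porosity-controlled logarithmic integral, a per-scale gain, and an iteration over $\sim\log h^{-1}$ scales---and that is indeed the skeleton of Bourgain--Dyatlov's argument (this paper only cites the result; the strategy is recalled after Theorem \ref{thm:FUP_conditional_damping}). But your central construction is oriented the wrong way. In the actual proof the BM multiplier is adapted to the frequency-side set $\mb Y$ and is forced to decay rapidly \emph{on} $\mb Y$ (cf.\ Definition \ref{defn:damping_func_l2} and Proposition \ref{prop:existence_damping}): one takes a weight $\omega\le 0$ of size $\sim -|x|/(\log|x|)^{\alpha}$ on $\mb Y$ and mild elsewhere, and porosity enters precisely to give $\int_{\R}|\omega(x)|(1+x^2)^{-1}dx\le C(\nu)$ uniformly in $h$, because $|\mb Y\cap[2^k,2^{k+1}]|\lesssim 2^{k(1-\gamma(\nu))}$. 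Your weight is instead large on the \emph{complement} of $\mb X$ (you want $\psi\approx1$ near $\mb X$ and tiny in the gaps). For that orientation porosity buys nothing uniform in $h$: the gaps carry essentially all of the mass of $[-1,1]$, and your profile $\exp\bigl(c\,\ell/\dist(\cdot,\partial I)\bigr)$ is in fact $O(1)$ deep inside each gap and blows up at the gap endpoints, so it does not even produce the ``superpolynomially small deep inside every large gap'' behavior you assert. A band-limited majorant of $1_{\mb X}$ (or of $1_{\mb Y}$) is not the mechanism; what is needed is that multiplying $\hat f$ by a function decaying fast on $\mb Y$ yields a function whose Fourier transform decays like $\exp\bigl(-c|\xi|/(\log(2+|\xi|))^{\alpha}\bigr)$ with $\alpha<1$.

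The second gap is the engine that converts this into a power of $h$. In the known proof the damping step is followed by a quantitative unique continuation principle: a function with the above Fourier decay retains a fixed fraction of its $L^2$ mass on any union of width-$w$ intervals, one in each unit interval; this yields the single-scale estimate $\|f1_{\mb X+\B_{r/L}}\|_2\le(1-c)\|f1_{\mb X+\B_r}\|_2$, which is then iterated $\sim\log h^{-1}$ times. Your substitute---a submultiplicative recursion $A(h)\le C\,A(\sqrt h)^2$ obtained by splitting scales at $\sqrt h$---is not justified: localizing $f$ to $\sqrt h$-boxes in physical space destroys the exact Fourier support in $\mb Y$, so the estimate for the scale range $[h,\sqrt h]$ cannot be applied to the localized pieces, and with $C>1$ the recursion would not produce $h^{\beta}$ without exactly the kind of single-scale input you have not supplied. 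The appeal to an ``interplay of band-limitedness'' does not fill this hole; the unique continuation lemma (or an explicit substitute for it) is the heart of the proof and is missing from the proposal.
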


\begin{remark*}
In Bourgain and Dyatlov's paper the hypothesis is that $\mb X$ and $\mb Y$ are Ahlfors--David regular rather than porous. These two notions are equivalent up to a change in parameters: any regular set of dimension $<1$ is porous, and any porous set is contained in a regular set of dimension $<1$. The first statement of FUP using porous sets appeared in \cite{DyatlovJinFullSupport}. 
\end{remark*}

Not all porous sets $\mb X, \mb Y \subset \R^d$ have a fractal uncertainty principle. Speaking formally, we could have 
\begin{equation}\label{eq:orthogonal_lines}
\mb X = \{(t, 0)\, :\, t \in \R\},\quad \mb Y = \{(0, t)\, :\, t \in \R\}.
\end{equation}
If $\mu_{\mb X}$ and $\mu_{\mb Y}$ are the standard measures on $\mb X$ and $\mb Y$, then $\widehat {\mu_{\mb X}} = \mu_{\mb Y}$. See \cite{DyatlovSurvey}*{Example 6.1} for more details. We need porosity on lines to rule out this example.

\subsection{Prior work on higher dimensional fractal uncertainty}

A set $\mb X \subset \R^d$ is \textit{Ahlfors--David $\delta$-regular} with constant $C_{\AD}$ from scales $\alpha_0$ to $\alpha_1$ if there is a measure $\mu$ supported on $\mb X$ satisfying the following. For every ball $\B$ with diameter $\alpha_0 < R < \alpha_1$,
\begin{equation}
    \mu(\B) \leq C_{\AD}\, R^{\delta},
\end{equation}
and if in addition $\B$ is centered at a point in $\mb X$, then
\begin{equation}
    \mu(\B) \geq C_{\AD}^{-1}\, R^{\delta}.
\end{equation}
For $\mb X \subset [-1,1]^d$ a $\delta$-regular set from scales $h$ to $1$ and $\mb Y\subset [-h^{-1}, h^{-1}]^d$ a $\delta'$-regular set from scales $1$ to $h^{-1}$, there is a trivial bound 
\begin{equation}
\label{e:trivial-bound}
    \supp \hat f \subset \mb Y\, \Longrightarrow\, \| f 1_{\mb X}\|_2  \leq C\min(1, h^{(d-(\delta+\delta'))/2}) \| f \|_2
\end{equation}
where $C$ depends only on $\delta, \delta', C_{\AD}, d$. The estimate $\| f 1_{\mb X}\|_2  \leq Ch^{(d-(\delta+\delta'))/2} \| f \|_2$ follows from combining $L^1\to L^{\infty}$ boundedness of the Fourier transform with a volume bound on the sets $\mb X$ and $\mb Y$. 
An FUP is any improvement over this trivial bound, and the regimes $\delta+\delta' < d$ and $\delta+\delta' > d$ are quite different. Recently, Backus, Leng, and Z. Tao \cite{BackusLengTao} gave a definitive result in the former setting. They proved an FUP if $\delta+\delta' < d$ and $\mb X, \mb Y$ are not orthogonal in a certain sense. The present paper is about the $\delta + \delta' > d$ regime (the bound~\eqref{eq:higher_dim_FUP_estimate} trivially follows from~\eqref{e:trivial-bound} if $\delta+\delta' < d$). 

Han and Schlag \cite{HanSchlag} proved an FUP when $\mb X$ is an arbitrary porous set and $\mb Y$ is a Cartesian product of one dimensional porous sets. Cladek and T. Tao \cite{CladekTao} proved an additive energy estimate for fractal sets and used this to prove an FUP when the ambient dimension $d$ is odd and $\mb X, \mb Y$ are $\delta$-regular with $d/2 - \varepsilon(d, C_{\AD}) < \delta < d/2+\varepsilon(d, C_{\AD})$. The author \cite{Cohen} proved an FUP when $\mb X, \mb Y$ are Cantor sets in $\Z/N\Z \times \Z/N\Z$ which don't contain a pair of orthogonal lines (the ideas in the current paper are unrelated to that work). 

We also mention that Dyatlov~\cite{DyatlovExpositoryNote} wrote an expository note giving an alternative
point of view on some of the proofs in the present paper.

\subsection{The Beurling--Malliavin multiplier problem}

A key ingredient in Bourgain and Dyatlov's proof of Theorem \ref{thm:FUP_1D} is the Beurling--Malliavin (BM) multiplier theorem, a classical result in harmonic analysis. This theorem has been revisited many times by many authors, see in particular Beurling and Malliavin's original paper \cite{BM_original} and the recent survey by Mashregi, Nazarov, and Havin \cite{NazarovHavinMashregi}.

\begin{theorem}[Beurling--Malliavin]{\label{thm:BM}}
Let $\omega: \R \to \R_{\leq 0}$ be a weight function satisfying 
\begin{align}
    |\omega(x_1) - \omega(x_2)| &\leq C_{\Lip} |x_1 - x_2|\quad \text{for all $x_1, x_2 \in \R$},\label{eq:BM_cond_lip} \\ 
    \int_{\R} \frac{\omega(x)}{1+x^2}\, dx &> -\infty. \label{eq:BM_cond_growth} 
\end{align}
For every $\sigma > 0$, there is a nonzero function $f \in L^2(\R)$ such that $\supp \hat f \subset [-\sigma, \sigma]$ and
\begin{align*}
    |f(x)|& \leq e^{\omega(x)} \qquad \text{for all $x \in \R$}.
\end{align*}
\end{theorem}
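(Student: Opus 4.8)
This is a classical result; the plan is to run the standard Beurling--Malliavin argument, whose only serious ingredient is the ``little multiplier theorem.'' \emph{Reductions.} By Paley--Wiener, requiring $f\in L^2(\R)$ with $\supp\hat f\subset[-\sigma,\sigma]$ is the same as requiring that $f$ extend to an entire function of exponential type $\le\sigma$ with $f|_\R\in L^2(\R)$. Set $w:=e^{-\omega}\colon\R\to[1,\infty)$, so the hypotheses read: $\log w$ is Lipschitz and $\int_\R\frac{\log w(x)}{1+x^2}\,dx<\infty$. Suppose one can produce a nonzero entire $f_0$ of exponential type $\le\sigma/2$ with $\|f_0 w\|_{L^\infty(\R)}\le e^{C_0}$, i.e.\ $\log|f_0(x)|\le\omega(x)+C_0$ on $\R$ (no $L^2$ control, with an additive slack). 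Then
\[
    f(z):=e^{-C_0}\,f_0(z)\,\frac{\sin(\varepsilon z)}{z},\qquad \varepsilon:=\min\{\sigma/2,\,1\},
\]
is entire of exponential type $\le\sigma$, not identically zero, lies in $L^2(\R)$ since $|\sin(\varepsilon x)/x|\le1$ is square-integrable on $\R$, and satisfies $|f(x)|\le e^{-C_0}\cdot e^{\omega(x)+C_0}\cdot 1=e^{\omega(x)}$. So the whole statement reduces to the Beurling--Malliavin \emph{little multiplier theorem}: if $\log w$ is Lipschitz with $\int\frac{\log w}{1+x^2}<\infty$, then for every $\delta>0$ there is a nonzero entire $m$ of exponential type $\le\delta$ with $mw\in L^\infty(\R)$.

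\emph{The little multiplier theorem.} The naive attempts fail instructively. Taking the Poisson extension $u\le0$ of $\omega$ to the upper half plane, forming its harmonic conjugate, and exponentiating produces a function that is analytic and bounded by $1$ on the half plane with boundary modulus exactly $e^{\omega}$ --- but it is not entire and carries no exponential-type structure. A plain canonical product with real zeros, meanwhile, tends to \emph{grow} on $\R$ rather than decay. The correct classical route is to build $m$ as a product of \emph{elementary multipliers} localized to the ``long intervals'': the maximal intervals on which a suitably regularized version of $\frac{\log w(x)}{1+|x|}$ exceeds a threshold of order $\delta$. Over each long interval $I_j$ one places an elementary factor $e_j$ that is $\le 1$ on $\R$, supplies the decay $\gtrsim 1/w$ on $I_j$, and has exponential type equal to its ``density contribution''; then $m=\prod_j e_j$ (times a fixed normalizing factor) majorizes $1/w$ on $\R$ and has exponential type $\sum_j(\text{type of }e_j)$. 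The entire difficulty is now that one must show $\sum_j(\text{type of }e_j)<\delta$.

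\emph{Main obstacle.} That last inequality is the \emph{Beurling--Malliavin covering lemma}: a set of finite logarithmic integral can be covered by ``long'' intervals of arbitrarily small total Beurling--Malliavin density. This is the technical heart of the theory and admits no shortcut; in particular ordinary counting density is too coarse to see the right threshold. For instance $\omega(x)=-|x|/\log(2+|x|)$ is Lipschitz but has a divergent logarithmic integral, so it admits \emph{no} multiplier of finite type at all --- a nonzero entire function of exponential type bounded by $1$ on $\R$ necessarily satisfies $\int_\R\frac{\log|f(x)|}{1+x^2}\,dx>-\infty$ by the Cartwright--Levinson theory, which would be violated --- yet the zero set one would naively use has counting density $0$. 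So the Lipschitz condition alone is not enough and the logarithmic-integral hypothesis is genuinely essential, forcing one into the efficient-covering-by-long-intervals notion of density and the Phragmén--Lindelöf/potential-theoretic combinatorics behind it; I would cite Beurling and Malliavin's original paper \cite{BM_original} and the survey \cite{NazarovHavinMashregi} for this lemma. Granting it, the elementary-multiplier construction, the bookkeeping of exponential types, and the reductions above are all routine.
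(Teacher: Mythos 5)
Your proposal is correct and follows the classical canonical-product / covering-lemma route to the Beurling--Malliavin multiplier theorem: reduce via Paley--Wiener and an auxiliary $\sin(\varepsilon z)/z$ factor to the ``little multiplier theorem,'' then defer the hard combinatorial step --- the covering lemma for sets of finite logarithmic integral by long intervals of small density --- to \cite{BM_original} and \cite{NazarovHavinMashregi}. Since Theorem \ref{thm:BM} is itself quoted in the paper as a cited classical result (with exactly those references), deferring at that point is reasonable. That said, the proof strategy the paper \emph{sketches} in \S\ref{sec:BM_problem_intro} is genuinely different: it splits the problem into a subharmonic step \ref{BM_1D_step:1} (build a subharmonic $u\le\omega$ on $\R$ with $u(0)=0$ and $u(x+iy)\le\sigma|y|$, via the Poisson extension $E\omega + C|y|$ when $\|H[\omega']\|_\infty<\infty$, and via the Mashreghi--Nazarov--Havin weight modification in general) and an analytic step \ref{BM_1D_step:2} (convert $u$ into an entire $f$ with $\log|f|\lesssim u$ and $f(0)=1$ using H\"ormander's $L^2$ theory of $\delbar$, following Bourgain's note). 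The two routes buy different things: your covering-lemma argument is the classical, sharp version but is bound to the zero-set structure and canonical products of one-variable entire functions, which have no good analogue in $\C^d$; the subharmonic-plus-$\delbar$ decomposition is precisely the part of the machinery that survives in higher dimensions (with plurisubharmonic replacing subharmonic), which is why the paper frames the one-dimensional theorem this way before proving Theorem \ref{thm:higher_dim_BM}. One small bookkeeping point: with the paper's convention $\hat f(\xi)=\int f(x)e^{-2\pi i x\xi}\,dx$, support of $\hat f$ in $[-\sigma,\sigma]$ corresponds to exponential type at most $2\pi\sigma$, not $\sigma$; your type budget should be scaled by $2\pi$, but this is purely a normalization and does not affect the argument.
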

Condition \eqref{eq:BM_cond_lip} asserts Lipschitz regularity and \eqref{eq:BM_cond_growth} controls the growth of $\omega$. 
\begin{remarks*} \leavevmode
\begin{enumerate}[label=\arabic*.]
    \item Here are some examples to help digest the growth condition \eqref{eq:BM_cond_growth}.
    \begin{enumerate}[label=(\roman*)]
        \item If $\omega(x) = -|x|$ then the growth condition is not satisfied.
        \item If $\omega(x) = -\frac{|x|}{\log(2+|x|)}$ then the growth condition is not satisfied.
        \item If $\omega(x) = -\frac{|x|}{(\log(2+|x|))^2}$ then the growth condition is satisfied.
    \end{enumerate}

    \item If we look for functions with Fourier support in $[0,\infty)$ rather than $[-\sigma, \sigma]$ there is a very precise result. 
    Given a measurable function $\omega: \R \to [-\infty,\infty)$, the following are equivalent:
    \begin{enumerate}[label=(\roman*)]
        \item There exists $f \in L^2(\R)$ with $\supp \hat f \subset [0,\infty)$ and $|f| = e^{\omega}$,\label{thmitem:supp_in_half_line}
        \item We have $e^{\omega} \in L^2(\R)$ and the growth condition \eqref{eq:BM_cond_growth} is satisfied.\label{thmitem:growth_satisfied}
    \end{enumerate}
    \vspace{0.1in}
    The direction \ref{thmitem:supp_in_half_line}$\Rightarrow$\ref{thmitem:growth_satisfied} is called the second F. \& M. Riesz Theorem, and the direction \ref{thmitem:growth_satisfied}$\Rightarrow$\ref{thmitem:supp_in_half_line} is the construction of outer functions. See \cite{NazarovHavinMashregi}*{\S1.1}. The direction \ref{thmitem:supp_in_half_line}$\Rightarrow$\ref{thmitem:growth_satisfied} shows that the growth condition in Theorem \ref{thm:BM} is necessary. 
\end{enumerate}
\end{remarks*}
We prove a higher dimensional Beurling--Malliavin theorem which is the key ingredient for Theorem \ref{thm:FUP_higher_dim}. We hope this result will be of independent interest. Let $\omega: \R^d \to \R_{\leq 0}$ be a weight function and define
\begin{align}
    \G(\mb x) &= \int_{1/2}^{2} |\omega(s\mb x)|\, ds, \quad  \label{eq:defn_of_H}\\ 
    \G^*(r) &= \sup_{|\mb x| = r} G(\mb x). \label{eq:defn_of_G*} 
\end{align}
Also let $\langle \mb x \rangle = (1+|\mb x|^2)^{1/2}$. 
\begin{theorem}\label{thm:higher_dim_BM}
Let $\omega: \R^d \to \R_{\leq 0}$ be a weight satisfying 
\begin{align}
    \omega(\mb x) &= 0 && \text{for $|\mb x| \leq 2$}, \label{eq:higher_dim_BM_weight_zero_small_x}\\
    |D^a\omega (\mb x)| &\leq C_{\reg} \langle \mb x \rangle^{1-a} && \text{for $0 \leq a \leq 3$}, \label{eq:regularity_condition_0to3}\\ 
    \int_0^{\infty} \frac{G^*(r)}{1+r^2}\, dr &\leq C_{\gr}. \label{eq:higher_dim_BM_growth_cond}
\end{align}
For any $\sigma > 0$, there exists a function $f \in L^2(\R^d)$ such that 
\begin{align}
    \supp \hat f &\subset \B_{\sigma}, \\ 
    |f(\mb x)| &\geq \frac{1}{2} && \text{for all $\mb x \in \B_{r_{\min}}$}, \\ 
    |f(\mb x)| &\leq Ce^{c\sigma\, \omega(\mb x)} && \text{for all $\mb x \in \R^d$}.
\end{align}
We may take
\begin{align}
    c &= \frac{c_d}{\max(C_{\reg}, C_{\gr})} \\
    r_{\min} &= c_d\, \min(\sigma, \sigma^{-1}) \\ 
    C &= C_d\, \max(\sigma^{-C_d}, e^{3\sigma})
\end{align}
where $c_d, C_d > 0$ are constants that depend only on the dimension.
\end{theorem}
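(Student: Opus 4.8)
The plan is to prove Theorem~\ref{thm:higher_dim_BM} by induction on $d$, with base case $d=1$ being an effective form of the classical Beurling--Malliavin theorem (Theorem~\ref{thm:BM}) in which the dependence of $\sigma$, $c$, $C$, $r_{\min}$ on $C_{\reg}$ and $C_{\gr}$ is tracked; this effective form can be read off from the classical construction (as in \cite{BourgainDyatlov}), so I will concentrate on the inductive step $d-1 \to d$. The structural feature that makes a dimensional reduction possible is that the target Fourier support $\B_\sigma$ is rotation-invariant, so we are free to choose coordinates adapted to whatever part of $\omega$ we are handling.

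For the inductive step I would first decompose $S^{d-1}$ into $m = 2d$ spherical caps $S_1, \dots, S_m$, one around each $\pm e_i$, of angular radius $\arccos(1/\sqrt{d}) < \pi/2$, so that the cones $C_j = \{\mb x \ne 0 : \mb x/|\mb x| \in S_j\}$ cover $\R^d \setminus \{0\}$ and, in coordinates $\mb x = (x_1, \mb x')$ adapted to the $j$-th axis, $C_j \subset \{|\mb x'| \le K_d\, x_1\}$. Taking a degree-zero partition of unity $1 = \sum_j \chi_j$ on $\{|\mb x| \ge 1\}$ with $\chi_j$ supported in a slightly dilated cone, and using that $\omega$ vanishes on $\B_2$, we get $\omega = \sum_j \omega\chi_j$ where each $\omega\chi_j$ is supported in $C_j$ and again satisfies (\ref{eq:higher_dim_BM_weight_zero_small_x})--(\ref{eq:higher_dim_BM_growth_cond}) with $C_{\reg}, C_{\gr}$ enlarged by an $O_d(1)$ factor (the derivative bounds survive since $|D^a\chi_j| \lesssim_d |\mb x|^{-a}$ and $|\omega(\mb x)| \le C_{\reg}|\mb x|$, and the growth bound survives since $|\omega\chi_j| \le |\omega|$ forces $G^*_{\omega\chi_j} \le G^*_\omega$). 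It then suffices to produce, for each $j$, a function $f_j \in L^2(\R^d)$ with $\supp \hat f_j \subset \B_{\sigma/m}$, $|f_j(\mb x)| \le C\,e^{c\sigma(\omega\chi_j)(\mb x)}$, and $|f_j|$ bounded below by a positive constant on a small ball about the origin; a suitable constant multiple of $\prod_j f_j$ then has Fourier support in $\B_\sigma$, is $\ge \tfrac12$ near the origin, and is $\le C' e^{(c/m)\sigma\, \omega(\mb x)}$, which is of the required form once the dimensional constants are absorbed.

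To build such an $f_j$, set $\varpi := \omega\chi_j$, rotated so it is supported in $\{|\mb x'| \le K_d\, x_1,\ |\mb x| \ge 2\}$, where $|\mb x| \sim_d x_1$. I would look for $f_j$ in product form $f_j(\mb x) = \psi(x_1)\,\eta(\mb x')$, with $\hat\psi$ supported in $[-\sigma/(2m), \sigma/(2m)]$ and $\hat\eta$ supported in $\B^{d-1}_{\sigma/(2m)}$, so that $\hat f_j$ lies in a box inside $\B_{\sigma/m}$ and $f_j \in L^2(\R^d)$. Here $\psi$ would be supplied by the base case applied to an \emph{axial weight} $h(x_1)$ and $\eta$ by the inductive hypothesis in dimension $d-1$ applied to a \emph{transverse weight} $V(\mb x')$: if $h(x_1) + V(\mb x') \le \varpi(\mb x)$ holds for all $\mb x$ (reading $\varpi \equiv 0$ off the cone), then $|f_j(\mb x)| \le C\,e^{c\sigma(h(x_1)+V(\mb x'))} \le C\,e^{c\sigma\varpi(\mb x)}$, and the remaining requirements on $f_j$ descend from those on $\psi$ and $\eta$. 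Hypotheses (\ref{eq:regularity_condition_0to3}) and (\ref{eq:higher_dim_BM_growth_cond}) enter exactly when constructing $h$ and $V$: (\ref{eq:higher_dim_BM_growth_cond}) controls $\omega$, hence $\varpi$, along lines through the origin (indeed $\int_0^\infty |\omega(t\theta)|(1+t^2)^{-1}\,dt \lesssim \int_0^\infty G^*(r)(1+r^2)^{-1}\,dr$ for every unit $\theta$, by Fubini), and the higher-derivative bounds say $\varpi$ is flat on scale $|\mb x|$ at radius $|\mb x|$, limiting how it varies transversally.

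The main obstacle is producing the split $\varpi \ge h \oplus V$ with $h$ and $V$ both legitimate weights. The naive choice $h(x_1) = \inf_{|\mb x'| \le K_d x_1} \varpi(x_1,\mb x')$, $V \equiv 0$ fails: since $\varpi$ is only Lipschitz, the transverse supremum $\sup_{|\mb x'| \le K_d x_1} |\varpi(x_1,\mb x')|$ that $h$ would then have to dominate can be of size $\sim \sqrt{C_{\reg}\,x_1\,G^*(x_1)}$, and $\int \sqrt{C_{\reg}\,x_1\,G^*(x_1)}\,(1+x_1^2)^{-1}\,dx_1$ need not be finite even though $\int G^*(x_1)(1+x_1^2)^{-1}\,dx_1 \le C_{\gr}$ is. One must instead route the part of $\varpi$ that is thin radially but spread transversally into $V$ and feed it to the $(d-1)$-dimensional statement, all while keeping $h$ and $V$ Lipschitz and summable against their Poisson weights; this is the technical core. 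Threading the constants through the induction so that they close at $c = c_d/\max(C_{\reg},C_{\gr})$, $r_{\min} = c_d\min(\sigma,\sigma^{-1})$, $C = C_d\max(\sigma^{-C_d}, e^{2\sigma})$ --- in particular the appearance of $\min(\sigma,\sigma^{-1})$ and $e^{2\sigma}$, which I expect to trace back to a rescaling $\mb x \mapsto \sigma\mb x$ normalizing the Fourier support --- is a further bookkeeping task, as is checking that $|f| \ge \tfrac12$ on $\B_{r_{\min}}$ survives the product over $j$.
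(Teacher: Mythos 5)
There is a genuine gap, and it is exactly the one you flag yourself: the splitting $\varpi(\mb x)\ \le\ h(x_1)+V(\mb x')$ on each cone, with $h$ an admissible one-dimensional weight and $V$ an admissible $(d-1)$-dimensional weight, is never constructed, and nothing in the proposal indicates how it could be. This is not a bookkeeping issue but the entire content of the theorem in your scheme. The hypothesis (\ref{eq:higher_dim_BM_growth_cond}) is a condition on Poisson-weighted integrals of $|\omega|$ along rays through the origin, and such a condition does not obviously decouple into a radial condition on $h$ plus a transverse condition on $V$: as your own computation shows, the transverse supremum of $|\varpi|$ at radius $x_1$ can be as large as $\sqrt{C_{\reg}\,x_1\,G^*(x_1)}$, whose radial Poisson integral may diverge, so $h$ cannot simply take everything; on the other hand $V$ is itself constrained by the $a=0$ case of (\ref{eq:regularity_condition_0to3}) to satisfy $|V(\mb x')|\lesssim\langle \mb x'\rangle$ and by its own $(d-1)$-dimensional ray-integral condition, so $V$ can only absorb the part of $\varpi$ whose size does not exceed its transverse distance, and only insofar as the transverse ray integrals of that part stay summable when aggregated over all radii $x_1$. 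Whether every weight satisfying (\ref{eq:higher_dim_BM_weight_zero_small_x})--(\ref{eq:higher_dim_BM_growth_cond}) admits such a decomposition with constants comparable to $C_{\reg},C_{\gr}$ is a nontrivial quantitative claim that would itself need a proof of roughly the same difficulty as the theorem; the tensor ansatz $f_j=\psi(x_1)\eta(\mb x')$ only ever produces majorants of the special additive form $h\oplus V$ on each cone, which is a genuinely restrictive subclass. In addition, your base case is not available off the shelf: Theorem \ref{thm:BM} as classically stated gives neither the lower bound $|f|\ge\tfrac12$ on $\B_{r_{\min}}$ nor the explicit dependence $c\sim 1/\max(C_{\reg},C_{\gr})$, $C\sim C_d\max(\sigma^{-C_d},e^{2\sigma})$, so the ``effective 1D form'' is a second unproven input.

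For comparison, the paper does not induct on dimension at all. It solves a plurisubharmonic Beurling--Malliavin problem directly in $\C^d$: the extension operator (\ref{eq:defn_of_extension_op}) harmonically extends $\omega$ along every real-linear complex line, Proposition \ref{prop:Ew_is_psh} gives criteria (a Hilbert-transform bound on lines and a second-derivative ray-integral bound) under which $E\omega+C|\mb y|$ is plurisubharmonic, and \S\ref{sec:modifying_weight_funcs} modifies $\omega$ dyadically so that the spherical projections $\pi_{S^{d-1}}\widetilde\omega_k$ are constant, which is what makes the ray-integral condition summable using (\ref{eq:higher_dim_BM_growth_cond}). The passage from the plurisubharmonic majorant to an entire function with the stated lower bound near the origin and the constants $r_{\min}$, $C$ is then done by H\"ormander's $\bar\partial$ theory (Proposition \ref{prop:psh_to_bounded_fourier}) together with Paley--Wiener; this same mechanism is what supplies, in effect, the quantitative one-dimensional statement you were hoping to cite. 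If you want to pursue your route, the place to start is to try to prove (or refute) the splitting lemma for weights supported in a cone; without it the argument does not get off the ground.
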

The constants blow up as $\sigma \to 0$ because the condition $\supp \hat f \subset \B_{\sigma}$ becomes very hard to satisfy, and they blow up as $\sigma \to \infty$ because the condition $|f(\mb x)| \leq Ce^{c\sigma \omega(\mb x)}$ becomes very hard to satisfy. Only the constant $c$ depends on $C_{\reg}$ and $C_{\gr}$ whereas $r_{\min}$ and $C$ are given in terms of the ambient dimension $d$ and spectral radius $\sigma$. 

The regularity condition \eqref{eq:regularity_condition_0to3} is a Kohn-Nirenberg symbol condition up to three derivatives. 
Setting $a = 0$ gives the mild growth condition $|\omega(\mb x)| \leq C_{\reg}\langle \mb x \rangle$, and setting $a = 3$ gives the 3rd derivative condition $|D^3\omega(\mb x)| \leq C_{\reg}\langle \mb x\rangle^{-2}$. 
Theorem \ref{thm:higher_dim_BM} is much weaker than the Beurling--Malliavin theorem in one dimension because we require a lot more regularity. Nevertheless, the weights we construct for fractal sets will satisfy \eqref{eq:regularity_condition_0to3}. 

Let us discuss for a moment the growth condition \eqref{eq:higher_dim_BM_growth_cond}. 
On the one hand, taking $\omega \to G$ smooths out $\omega$ and makes it grow less quickly. On the other hand, $G \to G^*$ is a maximum and makes it grow more quickly. Morally, $G^*$ is constant on dyadic scales $[2^j, 2^{j+1}]$. Notice that in one dimension, 
\begin{align*}
    \int_0^{\infty} \frac{G^*(r)}{1+r^2}\, dr \sim \int_{-\infty}^{\infty}\frac{|\omega(t)|}{1+t^2}\, dt
\end{align*}
up to constants on both sides, so \eqref{eq:higher_dim_BM_growth_cond} is the same growth condition on $\R$ as in the classical Beurling--Malliavin theorem. The proof of Theorem \ref{thm:higher_dim_BM} involves estimating different dyadic pieces and then summing them together. We can get a decent estimate for each dyadic piece using only the regularity of $\omega$, and \eqref{eq:higher_dim_BM_growth_cond} is needed to sum these contributions.
The growth condition controls the mass of $\omega$ on lines through the origin, which makes sense in view of our observation that fractal sets can only have an FUP if they avoid lines.

\begin{remark*}
The condition \eqref{eq:higher_dim_BM_weight_zero_small_x} that $\omega(\mb x) = 0$ for $|\mb x| \leq 2$ is not really necessary in Theorem \ref{thm:higher_dim_BM}. 
One could modify a weight to satisfy \eqref{eq:higher_dim_BM_weight_zero_small_x} up to a change in constants. 
\end{remark*}

\subsection{Outline of the proof of fractal uncertainty}

We use a result of Han and Schlag \cite{HanSchlag} to deduce Theorem \ref{thm:FUP_higher_dim} from Theorem \ref{thm:higher_dim_BM}. The statements below are not exactly as they appear in Han and Schlag's paper, see \S\ref{sec:han_schlag_comparison} for a comparison. The factor $\sqrt{d}$ appears in this section in converting between $\ell_1$ and $\ell_2$ norms on $\R^d$.

\begin{definition}\label{defn:damping_func_l2}
The set $\mb Y \subset \R^d$ \textit{admits a damping function} with parameters $c_1, c_2, c_3, \alpha\in (0, 1)$ if there exists a function $\psi \in L^2(\R^d)$ satisfying
\begin{align}
    \supp \hat \psi &\subset \B_{c_1}, \\ 
    \| \psi \|_{L^2(\B_1)} &\geq c_2, \\ 
    | \psi(\mb x)| &\leq \langle \mb x \rangle^{-d} && \text{for all $\mb x \in \R^d$}, \\ 
    | \psi(\mb x)| &\leq \exp\left(-c_3 \frac{|\mb x|}{(\log (2+|\mb x|))^{\alpha}}\right)&& \text{for all $\mb x \in \mb Y$}.\label{eq:damping_func_decay_Y}
\end{align}
\end{definition}
It is important that $\alpha < 1$. If instead $\alpha > 1$ then \eqref{eq:damping_func_decay_Y} could hold on all of $\R^d$ and the definition wouldn't be interesting. Because $\alpha < 1$ the damping function has to decay much faster on $\mb Y$ than it does on the rest of $\R^d$.
Conditional on the existence of damping functions, Han and Schlag proved the following FUP. 

\begin{theorem}[\cite{HanSchlag}*{Theorem 5.1}]\label{thm:FUP_conditional_damping}
Suppose that
\begin{itemize}
    \item $\mb X \subset [-1,1]^d$ is $\nu$-porous on balls from scales $h$ to $1$, and
    \item $\mb Y \subset [-h^{-1}, h^{-1}]^d$ satisfies the following. There exist $c_2,c_3,\alpha \in (0,1)$ such that for all $h < s < 1$ and $\eta \in [-h^{-1}s-5, h^{-1}s+5]^d$ the set 
    \begin{equation}
        s\mb Y + [-4,4]^d + \eta
    \end{equation}
    admits a damping function with parameters $c_1 = \frac{\nu}{20\sqrt{d}}$, and $c_2, c_3, \alpha$. 
\end{itemize}
 Then there exists $\beta = \beta(\nu, c_2, c_3, d, \alpha) > 0$ and $\widetilde C = \widetilde C(\nu, c_2, c_3, d, \alpha) > 0$ so that for all $f \in L^2(\R^d)$
\begin{equation}\label{eq:estimate_han_schlag}
    \supp \hat f \subset \mb Y\, \Longrightarrow\, \| f 1_{\mb X}\|_2 \leq \widetilde C h^{\beta}\, \| f \|_{2}.
\end{equation}
\end{theorem}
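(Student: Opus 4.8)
The plan is to run the self-improving, induction-on-scales argument of Bourgain--Dyatlov \cite{BourgainDyatlov}; up to a translation of the hypotheses (carried out in \S\ref{sec:han_schlag_comparison}) this statement is \cite{HanSchlag}*{Theorem 5.1}, so part of the task is also to match the two formulations. The first step is to isolate the right scale-invariant inductive quantity: for a dyadic $N$, let $\mathcal A(N)\in[0,1]$ be the best constant in \eqref{eq:estimate_han_schlag} over all configurations in which $\mb X$ is $\nu$-porous on balls from $1/N$ to $1$ and $\mb Y$ obeys the damping hypothesis ``at resolution $N$'', the bound $\mathcal A(N)\le 1$ being automatic since $f\mapsto 1_{\mb X}f$ is a contraction on $L^2$. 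The reason the damping hypothesis is imposed on the whole family $s\mb Y+[-4,4]^d+\eta$ rather than on $\mb Y$ itself is precisely that this family is stable under the operations that appear when one zooms into a sub-cube: if $\chi_Q$ localizes $f$ to a cube $Q\subset[-1,1]^d$ of side $s\in(h,1)$, then after rescaling $Q$ to the unit cube and a suitable modulation the Fourier support of $\chi_Q f$ lies inside some $s\mb Y+[-4,4]^d+\eta$, and the hypothesis supplies a damping function for it. Establishing $\mathcal A(N)\lesssim N^{-\beta}$ then yields the theorem upon taking $N\asymp h^{-1}$.

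The heart of the matter is a single-scale gain. Given a configuration at resolution $N_0 M$ and $f$ with $\supp\hat f\subset\mb Y$, I would decompose $[-1,1]^d$ into $\asymp N_0^d$ cubes $Q$ of side $1/N_0$ with a smooth partition of unity $\{\chi_Q\}$ whose Fourier supports sit in $\B_{c_1 N_0}$ (so each $\chi_Q$ is essentially one cube wide, which is harmless). By $\nu$-porosity of $\mb X$ on balls at scales $\ge 1/N_0$, each rescaled cube contains a definite hole disjoint from $\mb X$, and the damping function $\psi$ attached to the Fourier support of the rescaled $\chi_Q f$ is fed into a multiplier argument --- exactly as Bourgain and Dyatlov use the Beurling--Malliavin function --- that transfers a fixed proportion of the $L^2$ mass of $\chi_Q f$ into that hole while enlarging its Fourier support only by $\B_{c_1}$, so that the inductive hypothesis still applies to the transferred piece inside $Q$. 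It is here, and here only, that the stretched-exponential decay \eqref{eq:damping_func_decay_Y} of $\psi$ on $\mb Y$ is used rather than merely the smallness of $\supp\hat\psi$ (which is all the partition of unity needs): because the exponent is $\alpha<1$, the mass that leaks back out of the hole after the multiplication is a \emph{strict} fraction, and the global bound $|\psi|\le\langle\,\cdot\,\rangle^{-d}$ ensures that what leaks has summable tails across the cubes. Summing over $Q$ with finite overlap and an almost-orthogonality (Schur or Cotlar) estimate for the nearly-disjoint rescaled Fourier supports of the $\chi_Q f$, one arrives at an estimate of the form $\mathcal A(N_0 M)\le (1-c)\,\mathcal A(M)$ --- with an error absorbed by the same mechanism at finer scales --- for fixed $c=c(\nu,c_2,c_3,\alpha,d)>0$ and $N_0=N_0(\nu,d)$; iterating $\asymp\log N/\log N_0$ times gives $\mathcal A(N)\lesssim N^{-\beta}$ with $\beta=-\log(1-c)/\log N_0>0$.

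The main obstacle is the single-scale gain itself. Without the damping function the implication ``$\supp\hat f\subset\mb Y$ and $\mb X$ misses a ball in $[-1,1]^d$'' $\Rightarrow$ ``$\|1_{\mb X}f\|_2\le(1-c)\|f\|_2$'' is false --- a frequency-localized bump can concentrate anywhere in the cube --- so the whole weight of the argument falls on getting the damping function to force a quantitative loss, and this, together with controlling the accumulation of errors over the $\asymp\log N$ scales (the finite-overlap and almost-orthogonality losses, the $[-4,4]^d+\eta$ fattenings, the $\sqrt d$ factors in passing between $\ell^1$ and $\ell^2$), is the delicate part, exactly as in \cite{BourgainDyatlov} and \cite{HanSchlag}. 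The final bit of bookkeeping is to verify that the statement as phrased above coincides with \cite{HanSchlag}*{Theorem 5.1}, which is the content of \S\ref{sec:han_schlag_comparison}.
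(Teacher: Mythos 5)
The paper does not reprove the Han--Schlag theorem: its entire proof of this statement is the reduction carried out in \S\ref{sec:han_schlag_comparison}, where Theorem \ref{thm:FUP_conditional_damping} is deduced from the original formulation (Theorem \ref{thm:han_schlag_original}) by checking that an $\ell_2$ damping function (Definition \ref{defn:damping_func_l2}) is in particular an $\ell_1$ damping function (Definition \ref{defn:damping_func_l1}), that $\nu$-porosity on balls implies box porosity at scale $L=\lceil\nu^{-1}\sqrt d\,\rceil$ (Lemma \ref{lem:compare_porosity}), that $c_1=\nu/(20\sqrt d)\leq (2L)^{-1}$, that the dilations $s=L^{-n}$ and translation ranges for $\eta$ demanded by Theorem \ref{thm:han_schlag_original} fall inside the family $s\mb Y+[-4,4]^d+\eta$ hypothesized here, and that the side conditions $c_3<c_3^*(d)$ and $N\geq N_0$ can be absorbed harmlessly. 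You mention this matching only as ``a final bit of bookkeeping'' and do not perform any of these checks; instead the bulk of your proposal attempts to re-derive the Han--Schlag theorem itself by induction on scales. That is a legitimate route in principle, but as written it has a genuine gap exactly where the whole weight of the argument lies.

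Namely, the single-scale gain is asserted through a ``multiplier argument that transfers a fixed proportion of the $L^2$ mass of $\chi_Q f$ into the hole,'' with the role of $\alpha<1$ explained as making ``the mass that leaks back out of the hole a strict fraction.'' That is not the mechanism, and no proof of it is offered. In \cite{HanSchlag} (and in the outline immediately following the theorem in this paper) the damping function is used to produce from $f$ a function whose Fourier transform decays like $\exp\bigl(-c|\xi|/(\log(2+|\xi|))^{\alpha}\bigr)$, and the heart of the matter is then a quantitative unique continuation principle: any function with such quasi-analytic-type decay (this is where $\alpha<1$ enters) retains a fixed proportion of its $L^2$ mass on every set $U=\bigcup_{\mb n}\Q_{\mb n}$ consisting of one width-$w$ cube per unit cube. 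That lower bound is the main technical content of Han--Schlag's paper, and nothing in your sketch substitutes for it; without it the claimed contraction $\mathcal A(N_0M)\leq(1-c)\,\mathcal A(M)$ is unsupported, as is the absorption of the almost-orthogonality and fattening errors you defer. If, as in the paper, you intend to use \cite{HanSchlag}*{Theorem 5.1} as a black box, then the substance you must actually supply is the definitional comparison described above rather than the induction on scales.
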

The proof has three steps.
\begin{enumerate}[label=\arabic*.]
    \item Prove the following quantitative unique continuation principle for functions with Fourier support in $\mb Y$. Let $w > 0$ be a small parameter and let $\{\Q_{\mb n}\}_{\mb n \in \Z^d}$ be a collection of width-$w$ cubes, exactly one in each integer cube. Set
    \begin{equation*}
        U = \bigcup_{\mb n \in \Z^d} \Q_{\mb n},\quad \Q_{\mb n} \subset \mb n + [0,1]^d.
    \end{equation*}
    There is some $c > 0$ so that for any $f \in L^2(\R^d)$ and $U$ as above,
    \begin{equation*}
        \supp \hat f \subset \mb Y \, \Longrightarrow\, \| f 1_U\|_2 \geq c \| f\|_2.
    \end{equation*}
    The proof starts by convolving $f$ with the damping functions for $\mb Y$ to get functions with Fourier decay like $\hat f(\xi) \lesssim \exp\Bigl(-\frac{|\xi|}{(\log(2+|\xi|))^{\alpha}}\Bigr)$ for some $\alpha < 1$. The problem is then to prove unique continuation for functions with rapidly decaying Fourier transform.
    \item Use the quantitative unique continuation principle from the last step to obtain a single-scale estimate. For $h < r < 1$, roughly speaking
    \begin{equation*}
        \supp \hat f \subset \mb Y \, \Longrightarrow\, \| f1_{\mb X + \B_{r/L}} \|_2 \leq (1-c) \| f 1_{\mb X + \B_{r}}\|
    \end{equation*}
    where $L > 0$ is a large constant. In Han and Schlag's paper smooth cutoffs are used rather than indicator functions. This estimate means that at every scale $h < r < 1$, $f$ has some fixed portion of its mass in the holes of the porous set $\mb X$. 
    \item Iterate the single scale estimate $\sim \log h^{-1}$ many times to obtain the power saving bound \eqref{eq:estimate_han_schlag}.
\end{enumerate}
This is the same strategy that Bourgain and Dyatlov developed for Theorem \ref{thm:FUP_1D}. Jaye and Mitkovski \cite{JayeMitkovski} abstracted the unique continuation part of this argument.
The main contribution of \cite{HanSchlag} is proving the right unique continuation principle for functions on $\R^d$ that have rapidly decaying Fourier transform.

We use our Theorem \ref{thm:higher_dim_BM} to make damping functions for line porous sets, which combined with Theorem \ref{thm:FUP_conditional_damping} gives Theorem \ref{thm:FUP_higher_dim}, see \S\ref{sec:finishing_pf_main_theorem} for details.
\begin{proposition}\label{prop:existence_damping}
Let $\mb Y \subset [-3h^{-1}, 3h^{-1}]^d$ be $\nu$-porous on lines from scales $\mu > 1$ to $h^{-1}$. Then there is some $\alpha = \alpha(\nu) < 1$ such that for any $0 < \sigma < 1$, $\mb Y$ admits a damping function with parameters $\alpha$ and
\begin{align}
    c_1 &= \sigma, \\ 
    c_2 &= c(\mu, d)\, \sigma^{C_d},\\ 
    c_3 &= c(\nu, d)\, \sigma.
\end{align}
\end{proposition}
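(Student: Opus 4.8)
\emph{Overview.} I would construct a weight $\omega$ on $\R^d$ adapted to $\mb Y$, feed it to Theorem~\ref{thm:higher_dim_BM}, and then install the polynomial decay $\langle\mb x\rangle^{-d}$ by hand by multiplying against a fixed band‑limited Schwartz function: the Beurling--Malliavin output itself cannot be polynomially decaying on all of $\R^d$, since its decay rate is tied to the weight and the weight must be supported near $\mb Y$. To keep constants tidy I would first rescale, replacing $\mb Y$ by $\mb Y^\flat:=\tfrac2\mu\mb Y$, which is $\nu$‑porous on lines from scales $2$ to $\tfrac2\mu h^{-1}$; I build a damping function for $\mb Y^\flat$ using Theorem~\ref{thm:higher_dim_BM} at spectral radius $R_0:=\sigma\mu/4$, and recover the statement for $\mb Y$ by the dilation $\psi(\mb x):=c_0\,\Psi(\tfrac2\mu\mb x)$, which turns spectral radius $2R_0$ into $\sigma$ and sends the weight's decay back onto $\mb Y$. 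After rescaling the dangerous small‑scale range becomes a bounded, $\nu$‑only‑dependent range, so all of $\mu$'s influence is pushed into the constant $c_2$.

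\emph{The weight (the heart of the matter).} Since $\mb Y^\flat$ is $\nu$‑porous on lines, its intersection with any line segment of length $L$ is a one‑dimensional $\nu$‑porous set, hence has covering number $\lesssim_\nu(L/\rho)^{\delta(\nu)}$ at every scale $2\le\rho\le L$, with $\delta(\nu)<1$ (the ``porous sets have dimension $<1$'' fact from the Remark); so a $\rho$‑neighbourhood of it inside such a segment has length $\lesssim_\nu L^{\delta}\rho^{1-\delta}$. Fix $\alpha=\alpha(\nu)\in\bigl(\tfrac{3}{4-\delta(\nu)},\,1\bigr)$, put $s(\mb x)=\langle\mb x\rangle(\log(2+|\mb x|))^{-\alpha}$ and $\rho(\mb x)=\langle\mb x\rangle(\log(2+|\mb x|))^{-\alpha/3}$, let $\tilde d$ be a standard variable‑scale smoothing of $\dist(\cdot,\mb Y^\flat)$ at scale $\rho(\mb x)/10$ (so $|\tilde d-\dist(\cdot,\mb Y^\flat)|\lesssim\rho$ and $|D^a\tilde d|\lesssim\rho^{1-a}$), and fix smooth cutoffs $\eta$ ($\equiv1$ on $[0,1]$, $\equiv0$ on $[2,\infty)$) and $\zeta$ ($\equiv0$ on $[0,2]$, $\equiv1$ on $[4,\infty)$). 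Define
\begin{equation*}
\omega(\mb x)=-\,\zeta(|\mb x|)\,s(\mb x)\,\eta\!\bigl(\tilde d(\mb x)/\rho(\mb x)\bigr)\ \le\ 0 .
\end{equation*}
Then $\omega\equiv0$ on $\B_2$ and $\omega(\mb x)\le-\langle\mb x\rangle(\log(2+|\mb x|))^{-\alpha}$ on $\mb Y^\flat\setminus\B_4$. For \eqref{eq:regularity_condition_0to3}: $|D^a s|\lesssim\langle\mb x\rangle^{1-a}(\log(2+|\mb x|))^{-\alpha}$ and $|D^a(\eta(\tilde d/\rho))|\lesssim\rho^{-a}\lesssim\langle\mb x\rangle^{-a}(\log(2+|\mb x|))^{\alpha a/3}$, so Leibniz gives $|D^a\omega|\lesssim_\alpha\langle\mb x\rangle^{1-a}(\log(2+|\mb x|))^{\alpha(a/3-1)}\lesssim_\alpha\langle\mb x\rangle^{1-a}$ for $0\le a\le3$ — this is exactly why the transition scale carries the exponent $\alpha/3$ — so $C_{\reg}\lesssim_\nu1$. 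For \eqref{eq:higher_dim_BM_growth_cond}: $\omega(\mb x)\ne0$ forces $\mb x$ into the $\approx3\rho(\mb x)$‑neighbourhood of $\mb Y^\flat$, so along any ray $\{t\in[r/2,2r]:\omega(t\theta)\ne0\}$ has length $\lesssim_\nu r(\log(2+r))^{-\alpha(1-\delta)/3}$; together with $|s|\lesssim r(\log(2+r))^{-\alpha}$ this gives $G^*(r)\lesssim_\nu r(\log(2+r))^{-\alpha(4-\delta)/3}$, and since $\alpha(4-\delta)/3>1$ the integral in \eqref{eq:higher_dim_BM_growth_cond} converges, $C_{\gr}\lesssim_\nu1$.

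\emph{Assembling and normalizing.} Apply Theorem~\ref{thm:higher_dim_BM} at spectral radius $R_0=\sigma\mu/4$ with weight $\omega$: one gets $f\in L^2$ with $\supp\hat f\subset\B_{R_0}$, $|f|\ge\tfrac12$ on a ball $\B_{r_{\min}}$ of radius $r_{\min}=c_d\min(R_0,R_0^{-1})$, and $|f(\mb x)|\le C\,e^{cR_0\omega(\mb x)}$, where $c=c_d/\max(C_{\reg},C_{\gr})=c(\nu,d)$ and $C=C_d\max(R_0^{-C_d},e^{2R_0})\lesssim_{\mu,d}\sigma^{-C_d}$ (using $\sigma<1$). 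Independently fix once and for all a Schwartz $g_0$ with $\hat g_0\in C_c^\infty(\B_1)$, $\hat g_0\ge0$, renormalized so that $|g_0(\mb x)|\le\langle\mb x\rangle^{-d}$ everywhere and $|g_0|\gtrsim1$ near the origin, and put $g(\mb x)=g_0(R_0\mb x)$, so $\supp\hat g\subset\B_{R_0}$ and $|g(\mb x)|\le\langle R_0\mb x\rangle^{-d}$. Set $\Psi=fg$ (hence $\supp\hat\Psi\subset\B_{2R_0}$) and $\psi(\mb x)=c_0\,\Psi(\tfrac2\mu\mb x)$, so $\supp\hat\psi\subset\B_\sigma$. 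For $\mb x\in\mb Y$ with $\mb x':=\tfrac2\mu\mb x\notin\B_4$,
\begin{equation*}
|\psi(\mb x)|\le c_0C\,\langle\mb x'\rangle^{-d}\,\exp\!\Bigl(-c\,R_0\,|\mb x'|\,(\log(2+|\mb x'|))^{-\alpha}\Bigr),
\end{equation*}
and the decisive cancellation is $c\,R_0\cdot\tfrac2\mu=c(\nu,d)\cdot\tfrac{\sigma\mu}4\cdot\tfrac2\mu=\tfrac12c(\nu,d)\sigma$, so both $\mu$ and the weight's normalization drop out and \eqref{eq:damping_func_decay_Y} holds with $\alpha=\alpha(\nu)<1$ and $c_3=c(\nu,d)\sigma$, once $c_0C\le1$. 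For $\mb x$ with $\mb x'\in\B_4$ (where $\omega(\mb x')=0$) the right side of \eqref{eq:damping_func_decay_Y} is $\ge\kappa(\nu,d,\mu)>0$, so one just shrinks $c_0$ so that $c_0C\le\kappa$. Choosing $c_0=c(\nu,d,\mu)\sigma^{C'_d}$ small enough also to force $|\psi(\mb x)|\le\langle\mb x\rangle^{-d}$ for all $\mb x$ (the factor $g$ already supplies $\langle\mb x'\rangle^{-d}$, and $c_0CR_0^{-d}\lesssim_{\mu,d}1$), and noting that on a ball of radius $\gtrsim_{\mu,d}\sigma$ about the origin $|f(\mb x')|\ge\tfrac12$ and $|g(\mb x')|\gtrsim1$, gives $\|\psi\|_{L^2(\B_1)}\gtrsim c_0\sigma^{d/2}=c(\nu,d,\mu)\sigma^{C_d}$. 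This is Definition~\ref{defn:damping_func_l2} with the stated parameters.

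\emph{Main obstacle.} Everything is bookkeeping except the weight, and there the tension is genuine: \eqref{eq:higher_dim_BM_growth_cond} wants $|\omega|$ small on radial averages, \eqref{eq:damping_func_decay_Y} forces $|\omega|\sim|\mb x|(\log|\mb x|)^{-\alpha}$ at every point of $\mb Y$, and \eqref{eq:regularity_condition_0to3} caps three derivatives. These reconcile only because line porosity makes $\mb Y$ genuinely lower‑dimensional, exponent $\delta(\nu)<1$, along every line, so its neighbourhoods are thin on every ray; because $\alpha$ is taken strictly above $\tfrac{3}{4-\delta(\nu)}$ (still below $1$, as Definition~\ref{defn:damping_func_l2} requires) so the dyadic sum converges; and because the smoothing scale is taken to be $\langle\mb x\rangle(\log(2+|\mb x|))^{-\alpha/3}$, just large enough for the symbol bounds and just small enough not to fatten $\omega$'s support. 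Getting those three exponents to line up is the only substantive step; the rescaling and the multiplication by $g$ are the devices that localize the $\mu$‑dependence into $c_2$ and put in the required $\langle\mb x\rangle^{-d}$ decay by hand.
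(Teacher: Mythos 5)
Your proposal follows the same overall strategy as the paper — build a weight $\omega$ whose support is a thin neighborhood of the porous set, check that line porosity makes the growth integral \eqref{eq:higher_dim_BM_growth_cond} converge, apply Theorem~\ref{thm:higher_dim_BM}, and then normalize — but it takes a different technical route at nearly every stage, and one of the key steps has a genuine logical slip.

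\emph{Where you differ from the paper.} The paper builds $\omega$ as a sum of explicit bump functions $\eta_{\Q}$ on finitely overlapping cubes of width $W_k=2^k/k^{s}$ tiling each dyadic annulus, summed only over cubes meeting $\mb Y$ (equations (5.1)--(5.12)); you instead use a variable-scale smoothed distance $\tilde d$ and a single closed formula $\omega=-\zeta\, s\,\eta(\tilde d/\rho)$. The paper secures the polynomial tail $\langle\mb x\rangle^{-d}$ by \emph{adding} the auxiliary weight $\omega_0$ from \eqref{eq:defn_of_omega_0} to the Beurling--Malliavin input and reading off $e^{\omega_0}\lesssim\langle\mb x\rangle^{-d}$; you instead \emph{multiply} the Beurling--Malliavin output by a fixed band-limited Schwartz factor $g$. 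The paper absorbs the lower cutoff $\mu$ by starting the dyadic sum at $k_0$ with $W_{k_0}>\mu$; you rescale $\mb Y\mapsto\tfrac2\mu\mb Y$. These are honest alternatives, and your post-processing (the cancellation $cR_0\cdot\tfrac2\mu=\tfrac12 c(\nu,d)\sigma$, and the bookkeeping for $c_0$, $c_2$) checks out.

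\emph{The gap.} In verifying \eqref{eq:higher_dim_BM_growth_cond}, what you need is a measure bound for $\{t\in[r/2,2r]:\ t\theta\in\mb Y^\flat+\B_{3\rho}\}$, i.e.\ for the intersection of the ray with an ambient $\rho$-neighbourhood of $\mb Y^\flat$. But what you actually estimate is the $\rho$-neighbourhood \emph{inside} the line of the one-dimensional set $\mb Y^\flat\cap\ell$ — you invoke ``its intersection with any line segment \dots is a one-dimensional $\nu$-porous set, hence has covering number $\lesssim_\nu(L/\rho)^{\delta}$.'' These are not the same: a point $t\theta$ can lie within $\rho$ of $\mb Y^\flat$ without lying within $\rho$ of $\mb Y^\flat\cap\ell$, and indeed $\mb Y^\flat\cap\ell$ could be empty while $\ell\cap(\mb Y^\flat+\B_{3\rho})$ has substantial measure. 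The conclusion you want is nonetheless true, but the correct justification runs through Lemma~\ref{lem:basic_props_line_porous}\ref{lempart:nbhd_porous_set}: the ambient neighbourhood $\mb Y^\flat+\B_{C\rho}$ is itself $\nu/2$-porous on lines from scales $\sim\rho/\nu$, so its intersection with any line is a $\nu/2$-porous subset of $\R$, and Corollary~\ref{cor:line_porous_small_intersection} then gives $|\ell\cap(\mb Y^\flat+\B_{C\rho})\cap[r/2,2r]|\lesssim_\nu r\,(\rho/r)^{\gamma(\nu)}$. (This is exactly the route the paper takes in the passage from Lemma~\ref{lem:basic_props_line_porous} to \eqref{eq:estimate_mass_Xj_l}.) With this substitution, your exponent arithmetic goes through with $1-\delta$ replaced by $\gamma(\nu)$ from Corollary~\ref{cor:line_porous_small_intersection}.

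\emph{A smaller caution.} You assert the existence of a smoothed distance $\tilde d$ with $|D^a\tilde d|\lesssim\rho^{1-a}$ and $|\tilde d-\dist|\lesssim\rho$ where $\rho$ varies like a symbol; this is a genuine variable-scale mollification that needs a Whitney-type construction and a slowly-varying argument to make rigorous. It can be done, but the paper's explicit bump-on-cubes decomposition sidesteps this entirely, and for three derivatives the constants are easier to track there. If you keep your route, this step deserves its own lemma.
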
 

\begin{remarks*}
\begin{enumerate}[leftmargin=*,label=\arabic*.]
    \item In practice we will take $\mu = 10\sqrt{d}/\nu$.
    \item The quantitative dependence on $\sigma$ is of the same form as \cite{JinZhang} (in $d = 1$, they have $c_2 \propto \sigma^6$).
    \item We will be able to take 
\begin{equation*}
    \alpha(\nu) = 1 - c\frac{\nu}{|\log \nu|}
\end{equation*}
for some absolute constant $c > 0$. 
\end{enumerate}
\end{remarks*}

\subsection{An application of fractal uncertainty}

We sketch Dyatlov and Jin's lower bound for eigenfunctions. Our goal is just to give a sense of how FUP is applied and we ignore many important details. This section is about one dimensional FUP, not higher dimensions.

Let $M$ be a connected compact hyperbolic surface. Write $\psi_k$ as the $L^2$-normalized $k$th Laplace eigenfunction with eigenvalue $\lambda_k = h^{-2}$. 
A fundamental question in quantum chaos is how the mass of high-frequency eigenfunctions is distributed. Dyatlov and Jin give information in this direction.

\begin{theorem}[Dyatlov \& Jin \cite{DyatlovJinFullSupport}]\label{thm:DyatlovJin}
Let $U \subset M$ be a nonempty open set. For some $c_U > 0$,
\begin{equation*}
    \| \psi_k 1_U \|_2 \geq c_U\quad \text{for all $k > 0$}.
\end{equation*}
\end{theorem}
Here is a rough sketch of how the fractal uncertainty principle is used to prove this Theorem. 
We can write $M = \Gamma \backslash \D$ where $\D$ is the Poincar\'e disk and $\Gamma\subset\SL(2,\mathbb R)$ is a group of isometries. Then $\psi_k$ lifts to a $\Gamma$-invariant eigenfunction $\widetilde \psi_k$ on $\D$, and $U$ lifts to a $\Gamma$-invariant open subset $\widetilde U \subset \D$. 

For $b\in \mathbb \mbS^1$ and $z\in\mathbb D$, denote by $P_b(z)$ the Poisson kernel.
For any $(b,r)\in \mathbb \mbS^1 \times \R$, the \emph{hyperbolic plane wave}
\begin{equation}
    \psi_b^r(z) := P_b(z)^{\frac12+ir},\quad z\in\mathbb D
\end{equation}
solves the eigenfunction equation
$-\Delta\psi_b^r=(r^2+\frac14)\psi_b^r$ on~$\mathbb D$. 
If $r > 0$ we call this an outgoing wave and if $r < 0$ it is incoming. Because $\psi_k$ has eigenvalue $\lambda_k$, we take $r = \sqrt{\lambda_k - 1/4}$. 

We can synthesize $\widetilde \psi_k$ in two ways, using either outgoing or incoming waves:
\begin{align*}
    \widetilde \psi_k(z) = \int_{\mathbb \mbS^1} f(b) \psi_b^r(z)\, db,\quad \widetilde \psi_k(z) = \int_{\mathbb \mbS^1} g(b) \psi_b^{-r}(z)\, db,\quad r \sim h^{-1},
\end{align*}
where $f,g$ are distributions on~$\mathbb \mbS^1$.
These distributions are related by an explicit formula 
(see e.g.~\cite{Borthwick}*{\S4.4})
\begin{equation}\label{eq:hyperbolic_fourier_transform}
    g(b) = c_r\int_{\mathbb \mbS^1} e^{-(1+2ir) \log |b-a|}\, f(a)\, da.
\end{equation}

Now let $\varepsilon > 0$ be small enough that $B_{1-\varepsilon } \subset \D$ covers $M$. Let $\gamma$ be a geodesic on $\D$ with endpoints $\gamma_+, \gamma_- \in \mathbb \mbS^1$. 
Define
\begin{equation}
    \mb X = \bigcup \{\gamma_+, \gamma_-\}\quad \text{over all $\gamma$ such that $\gamma \cap B_{1-\varepsilon } \neq \emptyset$ and $\gamma \cap \widetilde U = \emptyset$}.
\end{equation}
The set $\mb X \subset \mathbb \mbS^1$ represents the geodesics on $M$ that do not intersect $U$.
Using unique ergodicity of the horocycle flow on $M$ one can show that $\mb X$ is porous.

Morally speaking, if $\| \psi_k 1_U \|_2 = o(1)$, then $f$ and $g$ are both localized $h$-close to the set $\mb X$ where $h = \lambda_k^{-1/2}$. Because $f$ and $g$ are related by an oscillatory integral \eqref{eq:hyperbolic_fourier_transform}, the fractal uncertainty principle applied to the $h$-neighborhood of $\mb X$ rules out this scenario. 
See the survey \cite{DyatlovJournees} for details. 

It is conjectured that Theorem \ref{thm:DyatlovJin} holds in higher dimensions as well. Suppose $M$ is a $d$-dimensional hyperbolic manifold and $U \subset M$ is an open subset. We get fractals $\mb X \subset \mathbb \mbS^{d-1}$ in the same way, and if $\mb X$ is line porous then our Theorem \ref{thm:FUP_higher_dim} applies.

\subsection{Outline of the paper}
In \S\ref{sec:BM_problem_intro} we discuss how the Beurling--Malliavin multiplier problem naturally splits into two steps.
\begin{enumerate}[label=Step \arabic*:,leftmargin=*,labelindent=\parindent]
    \item Plurisubharmonic Beurling--Malliavin \eqref{BM_higher_dim_step:1} is a potential theory problem about constructing plurisubharmonic functions.
    \item Analytic Beurling--Malliavin \eqref{BM_higher_dim_step:2} is a several complex variables problem about constructing entire functions from those pluri\-subharmonic functions.
\end{enumerate}
Towards the end of \S\ref{sec:BM_problem_intro} we state our solution to each of these steps and give the proof of Theorem \ref{thm:higher_dim_BM} modulo the results of \S\S\ref{sec:constr_psh_Eomega_C}-\ref{sec:constr_entire_from_psh}.
In \S\ref{sec:constr_psh_Eomega_C} we define an extension operator taking functions on $\R^d$ to functions on $\C^d$ and use this operator to construct plurisubharmonic functions. 
In \S\ref{sec:modifying_weight_funcs} we show how to take a weight function satisfying the hypotheses of Theorem \ref{thm:higher_dim_BM} and modify it so the construction in \S\ref{sec:constr_psh_Eomega_C} is applicable. 
Together, \S\ref{sec:constr_psh_Eomega_C} and \S\ref{sec:modifying_weight_funcs} complete \ref{BM_higher_dim_step:1} and form the core of this paper. 
In \S\ref{sec:constr_entire_from_psh} we complete \ref{BM_higher_dim_step:2} using H\"ormander's $L^2$ theory of the $\delbar$ equation. This section follows an unpublished note of Bourgain. In \S\ref{sec:finishing_pf_main_theorem} we prove Proposition \ref{prop:existence_damping} and finish the proof of Theorem \ref{thm:FUP_higher_dim}.
In Appendix \ref{sec:loose_ends} we prove some loose ends.

\subsection{Notation}
For $f \in L^2(\R^d)$, we use the Fourier transform
\begin{align*}
    \hat f(\xi) &= \int_{\R^d} f(\mb x)\, e^{-2\pi i\, \mb x \cdot \xi}\, d\mb x.
\end{align*}
We often denote vectors $\mb z \in \C^d$ by $\mb z = \mb x + i \mb y$, with $\mb x, \mb y \in \R^d$. We use $\mbhat y$ to denote a unit vector, and if $\mb y \in \R^d \setminus \{0\}$ we write $\mbhat y = \mb y / |\mb y|$. The $\ell_2$ norm on $\R^d$, $\C^d$ is denoted $|\mb x|, |\mb z|$. We let
\begin{equation*}
    \langle \mb x\rangle = (1+|\mb x|^2)^{1/2}.
\end{equation*}
We denote the Hilbert transform on $L^2(\R)$ by $f \mapsto H[f]$. For functions $f \in C_0^1(\R)$, this is given by 
\begin{equation}\label{eq:defn_of_hilbert_transform}
    H[f](x) = p.v. \int_{-\infty}^{\infty} \frac{f(x-t)}{t}\, \frac{dt}{\pi}.
\end{equation}
For $u \in C^2(\C^d)$, $\del \delbar u$ is a Hermitian form which can be represented in coordinates as the Hermitian matrix 
\begin{align*}
    \langle (\del\delbar u) \mbhat e_j, \mbhat e_k\rangle &= \frac{\partial^2u }{\del z_j \delbar z_k} \\
    &= \frac{1}{4}(\partial_{x_j}\partial_{x_k} + \partial_{y_j}\partial_{y_k})u  + \frac{1}{4}i (\partial_{x_j} \partial_{y_k} -  \partial_{x_k} \partial_{y_j}) u
\end{align*}
where $\mbhat e_j = (0, \ldots, 0, 1, 0, \ldots, 0)$. 

For functions $f \in C^2(\R^d)$, the quadratic form $D^2f(\mb x)$ applied to the vector $\mb v$ is given by
\begin{equation}
    \langle (D^2 f(\mb x)) \mb v, \mb v \rangle. 
\end{equation}
We denote $D^a f = (\partial^{\alpha} f)_{|\alpha| = a}$ where $\alpha$ ranges over multi indices, and 
\begin{equation}
    |D^a f(\mb x)| = \sup_{|\alpha| = a} |\partial_{\alpha} f(\mb x)|.
\end{equation}
We use $A \lesssim B$ to denote that $A \leq C_d B$ where $C_d > 0$ only depends on the ambient dimension. We use $c_d, C_d > 0$ to denote small/large constants depending only on the dimension which may change from line to line. 

\subsection*{Acknowledgements}
Thanks to Tuomas Sahlsten for pointing out prior work on directional porosity. Thanks to Larry Guth and Ruixiang Zhang for several helpful discussions. Many thanks to Semyon Dyatlov for detailed and helpful comments, and for several useful conversations along the way. Thanks to anonymous referees for helpful comments that improved the paper. 

\section{The Beurling--Malliavin multiplier problem}\label{sec:BM_problem_intro}
The Beurling--Malliavin (BM) problem is about constructing functions with bounded Fourier support that have certain decay properties. In this section we discuss the one dimensional Beurling--Malliavin problem and then outline our approach in higher dimensions. In the outline we state the main results of \S\ref{sec:modifying_weight_funcs} and \S\ref{sec:constr_entire_from_psh} and use these to prove Theorem \ref{thm:higher_dim_BM}.

The starting point for the BM problem is the Paley--Wiener characterization of functions with bounded Fourier support. 

\begin{theorem}[Paley--Wiener]\label{thm:paley_wiener}
A function $f \in L^2(\R^d)$ has Fourier support in $\B_{\sigma/2\pi} = \Bigl\{\mb \xi\, :\, |\mb \xi| \leq \frac{\sigma}{2\pi}\Bigr\}$ if and only if $f$ is the restriction to $\R^d$ of an entire function $\tilde f: \C^d \to \C$ such that 
\begin{equation}
    |\tilde f(\mb x + i \mb y)| \leq A\, e^{\sigma |\mb y|} \quad \text{for some $A > 0$.}\label{eq:PaleyWiener_critetion}
\end{equation}
\end{theorem}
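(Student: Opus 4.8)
The plan is to prove the two implications separately. The forward direction (bounded Fourier support $\Rightarrow$ a holomorphic extension with exponential growth) is elementary; the reverse direction (growth $\Rightarrow$ the support condition) is the substantive one and I would handle it by shifting the contour of integration after a Gaussian regularization.

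For $(\Rightarrow)$: assume $\supp\hat f\subset\B_{\sigma/2\pi}$. Since the ball is bounded, $\hat f\in L^2(\B_{\sigma/2\pi})\subset L^1(\B_{\sigma/2\pi})$, so I can define $\tilde f(\mb z)=\int_{\B_{\sigma/2\pi}}\hat f(\xi)\,e^{2\pi i\,\mb z\cdot\xi}\,d\xi$. For each fixed $\xi$ the integrand is entire in $\mb z$, and on $\supp\hat f$ one has $|e^{2\pi i\,\mb z\cdot\xi}|=e^{-2\pi\,\Im\mb z\cdot\xi}\le e^{\sigma|\Im\mb z|}$, so on compact subsets of $\C^d$ the integrand is dominated by $C|\hat f(\xi)|$; differentiation under the integral sign (equivalently Morera's theorem in each variable) shows $\tilde f$ is entire. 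Fourier inversion gives $\tilde f|_{\R^d}=f$, and taking absolute values gives $|\tilde f(\mb x+i\mb y)|\le e^{\sigma|\mb y|}\,\|\hat f\|_{L^1}=:A\,e^{\sigma|\mb y|}$.

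For $(\Leftarrow)$: fix $\epsilon>0$ and put $\chi_\epsilon(\mb x)=e^{-\pi\epsilon|\mb x|^2}$, so $g:=f\chi_\epsilon\in L^1\cap L^2$ and $g$ extends to the entire function $\tilde g(\mb z)=\tilde f(\mb z)\,e^{-\pi\epsilon\sum_j z_j^2}$, which obeys $|\tilde g(\mb x+i\mb y)|\le A\,e^{\sigma|\mb y|+\pi\epsilon|\mb y|^2}\,e^{-\pi\epsilon|\mb x|^2}$ — Gaussian decay in $\mb x$, mild growth in $\mb y$. Because of this decay, Cauchy's theorem applied one complex coordinate at a time justifies, for every $\mb y\in\R^d$,
\[
\hat g(\xi)=\int_{\R^d}\tilde g(\mb x+i\mb y)\,e^{-2\pi i(\mb x+i\mb y)\cdot\xi}\,d\mb x = e^{2\pi\,\mb y\cdot\xi}\int_{\R^d}\tilde g(\mb x+i\mb y)\,e^{-2\pi i\,\mb x\cdot\xi}\,d\mb x,
\]
the boundary contributions over the slabs $\{|x_j|=R\}$ vanishing as $R\to\infty$. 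A crude bound gives $|\hat g(\xi)|\le A\,\epsilon^{-d/2}\,e^{2\pi\,\mb y\cdot\xi+\sigma|\mb y|+\pi\epsilon|\mb y|^2}$. Now if $|\xi|\ge\sigma/2\pi+\delta$, the choice $\mb y=-(\delta/\epsilon)\,\xi/|\xi|$ makes the exponent $\le-\pi\delta^2/\epsilon$, so $|\hat g(\xi)|\le A\,\epsilon^{-d/2}e^{-\pi\delta^2/\epsilon}$ uniformly on $\{|\xi|\ge\sigma/2\pi+\delta\}$. Since $g\to f$ in $L^2$ as $\epsilon\to0$, Plancherel gives $\hat g\to\hat f$ in $L^2$, so on every bounded subset of $\{|\xi|\ge\sigma/2\pi+\delta\}$ we get $\int|\hat f|^2\le 2\|\hat g-\hat f\|_2^2+2(\text{meas})\cdot A^2\epsilon^{-d}e^{-2\pi\delta^2/\epsilon}\to0$; hence $\hat f=0$ a.e. on $\{|\xi|\ge\sigma/2\pi+\delta\}$, and letting $\delta\to0$ shows $\supp\hat f\subset\B_{\sigma/2\pi}$.

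The main obstacle is the rigorous justification of the $d$-fold contour shift: one has to iterate Cauchy's theorem and check that the lateral integrals die at infinity. This is precisely what forces the insertion of the Gaussian factor $e^{-\pi\epsilon\sum_j z_j^2}$, which supplies the needed decay in the real directions at the cost of only an $e^{\pi\epsilon|\mb y|^2}$ worsening of the growth — harmless because $\epsilon$ is sent to $0$ at the very end. (An alternative to the reverse direction is to restrict $\tilde f$ to complex lines and invoke the one-dimensional Paley--Wiener theorem together with a Phragmén--Lindelöf argument, but the Gaussian-regularization route is cleaner to carry out directly in $\R^d$.) All remaining steps are routine.
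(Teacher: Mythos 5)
Your forward direction is the same as the paper's: integrate $e^{2\pi i\,\mb z\cdot\xi}$ against $\hat f$ over the ball and bound the integrand pointwise. The reverse direction is where you genuinely diverge. The paper restricts $\tilde f$ to complex lines in the direction $\hat\xi$, forms $G_{\mb x_0,\hat\xi}(z)=\tilde f(\mb x_0+z\hat\xi)\,e^{-2\pi i(\mb x_0+z\hat\xi)\cdot\xi}$, and shifts the one-variable contour down to $-i\infty$ using the exponential decay $|G(a-ib)|\le Ce^{-(2\pi|\xi|-\sigma)b}$; this is a clean geometric reduction to one complex variable, but as a sketch it glosses over the lateral integrals at $|\Re z|=R$, whose vanishing is not immediate from the growth bound and $f\in L^2$ alone (the paper cites Hörmander for the full argument). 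Your Gaussian regularization $\tilde g=\tilde f\cdot e^{-\pi\epsilon\sum z_j^2}$ is precisely designed to fix that: it manufactures the Gaussian decay in $\mb x$ that makes the coordinate-by-coordinate contour shift legitimate, at the harmless cost of an $e^{\pi\epsilon|\mb y|^2}$ worsening that disappears as $\epsilon\to0$; the choice $\mb y=-(\delta/\epsilon)\hat\xi$ then gives the exponent $-\pi\delta^2/\epsilon$ and the $L^2$ limiting argument finishes. Your computation of that exponent and the $\epsilon^{-d/2}$ prefactor both check out. The tradeoff is that the paper's route is shorter and exploits rotational symmetry (shifting in the single direction $\hat\xi$ rather than coordinate by coordinate), while yours is self-contained and sidesteps the decay issue that the paper hands off to a reference. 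Both are valid proofs of the same statement.
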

See \S\ref{subsec:PaleyWienerProof} for a proof sketch and \cite{HormanderVol1}*{Theorem~7.3.1} for a full proof. 

\subsection{Beurling--Malliavin in \texorpdfstring{$\R$}{R}}
Let us start in one dimension. We are given a weight function $\omega: \R \to \R_{\leq 0}$ with $\omega(0) = 0$. We would like to find a nonzero entire function $f: \C \to \C$ such that
\begin{align}
    \log |f(x)| &\leq \omega(x) \quad \text{for $x \in \R$}, \label{eq:BM_f_decay}\\
     \log |f(0)| &\geq -1, \label{eq:BM_f0=1}\\ 
    \log |f(x + iy)| &\leq \sigma|y| + A'.\label{eq:BM_PW}
\end{align}
Equation \eqref{eq:BM_f_decay} quantifies the decay of $f$, equation \eqref{eq:BM_f0=1} quantifies the non-vanishing of $f$, and equation \eqref{eq:BM_PW} ensures the Paley--Wiener criterion is satisfied so $\supp \hat f \subset [-\sigma/2\pi , \sigma/2\pi]$.

A function $u: \C \to \R$ is \textit{subharmonic} if it is upper semicontinuous and satisfies $\Delta u \geq 0$ in the distributional sense. If $f$ is an entire function then $\log |f|$ is subharmonic on $\C$. In fact, if $Z(f)$ is the zero locus of $f$ and $\mu_{Z(f)}$ is the counting measure on $Z(f)$ then
\begin{equation}
    \Delta \log |f| = 2\pi\mu_{Z(f)}. \label{eq:laplace_log_entire}
\end{equation}
Not much is lost by viewing $\log |f|$ as a general subharmonic function, and this is the best way to think about the magnitude of $f$. 

If we could solve the BM problem we could find a subharmonic function $u: \C \to \R$ such that $u \leq \omega$ on $\R$, $u(0) = 0$, and $u(x+iy) \leq \sigma|y|$. Several of the proofs work by finding a converse to this situation. There are two steps: the subharmonic Beurling--Malliavin problem and the analytic Beurling--Malliavin problem.

\begin{description}[style=multiline,leftmargin=\widthof{\bfseries $\mc{SH}$-BM.}+0.25in,align=parright]
  \item[\namedlabel{BM_1D_step:1}{$\mc{SH}$-BM}] Find a subharmonic function $u: \C \to \R$ such that $u|_{\R} \leq \omega$, $u(0) = 0$, and $u(x+iy) \leq \sigma|y|$.
  \item[\namedlabel{BM_1D_step:2}{$\mc A$-BM}] Find an analytic function $f$ such that $\log |f| \lesssim u$ and $f(0) = 1$. 
\end{description}
Each of these steps are approachable problems. First let's discuss the subharmonic BM problem. 
As a first attempt, one could try solving
\begin{description}[style=multiline,leftmargin=\widthof{\bfseries Exact $\mc{SH}$-BM.}+0.25in,labelindent=\parindent]
\item[\namedlabel{BM_1D_exact_step:1}{Exact $\mc{SH}$-BM}] Find a subharmonic function $u: \C \to \R$ such that $u|_{\R} = \omega$ and $u(x+iy) \leq \sigma |y|$. 
\end{description}
A natural candidate solution is to take $u = E\omega + C|y|$ where $E\omega: \C \to \R$ is obtained by separately harmonically extending $\omega$ to the upper and lower half planes.  We compute 
\begin{equation}
    \Delta u = 2\partial_y u(x+i0)\delta_{\R} = 2(H[-\omega'] + C)\delta_{\R}.\label{eq:laplace_harmonic_ext_1D}
\end{equation}
The operator $\omega \to H[-\omega']$ arises as the Dirichlet-to-Neumann operator of $\C \setminus \R$. As long as 
\begin{equation}\label{eq:hilbert_of_deriv_bdd}
 \| H[\omega'] \|_{\infty} < \infty
\end{equation}
we can take $C\geq\| H[\omega'] \|_{\infty}$ and $u$ will be subharmonic on $\C$ as desired. If \eqref{eq:hilbert_of_deriv_bdd} holds, then the \ref{BM_1D_exact_step:1} problem is solved in a canonical way.

The main challenge of Theorem \ref{thm:BM} is solving \ref{BM_1D_step:1} under the weaker condition that $\omega$ is just Lipschitz and Poisson integrable---in general the solution will have $u|_{\R} \leq \omega$ rather than $u|_{\R} = \omega$. 
There have been many approaches to this problem over the years. 
In their original paper Beurling \& Malliavin \cite{BM_original} use a variational argument based on the energy method for Dirichlet's problem. 
Koosis \cite{Koosis} developed an approach based on Perron's method of subsolutions for the Dirichlet problem. 
Mashreghi, Nazarov, and Havin \cite{NazarovHavinMashregi} solve \ref{BM_1D_step:1} by explicitly manipulating a Lipschitz weight $\omega$ to a modified weight $\widetilde \omega \leq \omega$ which satisfies $\| H[\widetilde \omega']\| < \infty$.
It turns out that the weights we care about for fractal uncertainty satisfy \eqref{eq:hilbert_of_deriv_bdd} so this main challenge is not relevant to us.

Now let's turn to the analytic BM problem. We start with a subharmonic function $u: \C \to \R$ and want to construct an entire function $f$ with $\log |f| \sim u$. Most proofs solve \ref{BM_1D_step:2} by carefully choosing the zero locus of $f$ in view of \eqref{eq:laplace_log_entire} and then writing $f$ as a Weierstrass product.
Bourgain wrote an unpublished note \cite{BourgainHormander} giving a different approach to \ref{BM_1D_step:2} based on H\"ormander's $L^2$ theory for the $\delbar$ equation. The upshot is that there exists an entire function $f: \C \to \C$ such that $f(0) = 1$ and 
\begin{equation*}
    \int_{\C} |f(z)|^2 e^{-u(z)} < C. 
\end{equation*}
This $L^2$ bound can be converted to an $L^{\infty}$ bound using subharmonicity, solving \ref{BM_1D_step:2}.

\subsection{Beurling--Malliavin in \texorpdfstring{$\R^d$}{Rd}}

In the last section we saw that the best way to think about the log of the magnitude of an entire function on $\C$ is as a general subharmonic function. How should we think about the magnitude of an entire function on $\C^d$?

Let $f: \C^d \to \C$ be entire. Then $\log |f|$ is a \textit{plurisubharmonic} function. A function $u: \C^d \to \R$ is plurisubharmonic if it is upper semicontinuous and its restriction to every complex line is subharmonic. Written explicitly, this means that we have the sub-mean value property
\begin{equation}\label{eq:sub_mean_value_prop_psh}
    u(\mb z) \leq \fint_0^{2\pi} u(\mb z + e^{i\theta}\mb v)\, d\theta \quad \text{ for any $\mb z,\mb v \in \C^d$}.
\end{equation}
Here $\fint_0^{2\pi} = \frac{1}{2\pi} \int_0^{2\pi}$ is a mean value. See the beginning of \S\ref{subsec:pf_prop_Ew_psh} for more discussion of the sub-mean value property and equivalence with the definition in terms of a nonnegative Laplacian.
A $C^2$ function $u$ is plurisubharmonic if the Hermitian matrix $\left(\frac{\del u}{\del z_j \delbar z_k}\right)$ is positive semidefinite. 
It is an important insight from several complex variables that the best way to think about the log of the magnitude of an entire function is as a general plurisubharmonic function. 
Once again we split the Beurling--Malliavin problem into two steps: the plurisubharmonic BM problem and the analytic BM problem.

\begin{description}[style=multiline,leftmargin=\widthof{\bfseries $\mc{PSH}$-BM.}+0.25in,align=parright]
  \item[\namedlabel{BM_higher_dim_step:1}{$\mc{PSH}$-BM}] Find a plurisubharmonic function $u: \C^d \to \R$ such $u|_{\R^d} \leq \omega$, $u(0) = 0$, and $u(\mb x+i\mb y) \leq \sigma|\mb y|$.
  \item[\namedlabel{BM_higher_dim_step:2}{$\mc{A}$-BM}] Find an entire function $f$ such that $\log |f| \lesssim u$ and $f(0) = 1$. 
\end{description}
This is the same two steps but with plurisubharmonic in place of subharmonic and vectors $\mb x, \mb y$ in place of scalars $x, y$. 

The main difficulty when we try to extend the Beurling--Malliavin theorem to higher dimensions is solving the plurisubharmonic BM problem.
As a first step, in \S\ref{sec:constr_psh_Eomega_C} we solve 
\begin{description}[style=multiline,leftmargin=\widthof{\bfseries Exact $\mc{PSH}$-BM.}+0.25in,labelindent=\parindent]
\item[\namedlabel{BM_higher_dim_exact_step:1}{Exact $\mc{PSH}$-BM}] Find a plurisubharmonic function $u: \C^d \to \R$ such that $u|_{\R^d} = \omega$ and $u(\mb x+i\mb y) \leq \sigma |\mb y|$. 
\end{description}
In equation \eqref{eq:defn_of_extension_op} we define an extension operator $\omega \to E\omega$  which takes a function on $\R^d$ to a function on $\C^d$. 
In one dimension $E$ is the Poisson extension operator, and in higher dimensions it is a suitable generalization. 
Proposition \ref{prop:Ew_is_psh} says that if $\omega$ satisfies two conditions (labeled \ref{item_Ew_psh:hilbert} and \ref{item_Ew_psh:second_deriv}) then $E\omega + C|\mb y|$ is plurisubharmonic on $\C^d$. 
Condition \ref{item_Ew_psh:hilbert} says that \eqref{eq:hilbert_of_deriv_bdd} holds uniformly for every restriction of $\omega$ to a line. Condition \ref{item_Ew_psh:second_deriv} is new to higher dimensions, and involves the second derivative of the integral of $\omega$ over lines. 

It turns out that in higher dimensions the exact problem really is too restrictive and the weights we care about for fractal sets do not satisfy condition \ref{item_Ew_psh:second_deriv}. 
In \S\ref{sec:modifying_weight_funcs} we show how to modify a weight $\omega$ to a weight $\widetilde \omega \leq \omega$ which has similar regularity but behaves better with respect to integrals over lines. Proposition \ref{prop:Ew_is_psh} can be applied to $\widetilde \omega$, and this solves \ref{BM_higher_dim_step:1}. Here is the precise statement,
see \S\ref{sec:modifying_weight_funcs} for the proof:

\begin{proposition}[\ref{BM_higher_dim_step:1}]\label{prop:BM_step1_prop}
Suppose that $\omega: \R^d \to \R_{\leq 0}$ satisfies the conditions (\ref{eq:higher_dim_BM_weight_zero_small_x},\ref{eq:regularity_condition_0to3},\ref{eq:higher_dim_BM_growth_cond}) of Theorem \ref{thm:higher_dim_BM} with constants $C_{\reg}$ and $C_{\gr}$. Then there exists a continuous plurisubharmonic function $u: \C^d \to \R$ satisfying
\begin{align}
    u(\mb x) &\leq \omega(\mb x) && \text{for $\mb x \in \R^d$}, \label{eq:u_less_omega_step1prop}\\ 
    u(\mb x) &= 0 && \text{for $|\mb x| \leq 2$, $\mb x \in \R^d$}, \label{eq:u_zero_step1prop}\\ 
    |u(\mb x_1) - u(\mb x_2)| &\leq C_{\Lip}|\mb x_1 - \mb x_2| && \text{for $\mb x_1, \mb x_2 \in \R^d$},\label{eq:u_Lip_step1prop} \\ 
    u(\mb x) &\leq u(\mb x + i \mb y) \leq u(\mb x) + \rho |\mb y| && \text{for $\mb x + i \mb y \in \C^d$}. \label{eq:u_bdd_y_step1prop}
\end{align}
We may take $C_{\Lip} \leq C_d C_{\reg}$ and $\rho \leq C_d\max(C_{\reg}, C_{\gr})$.
\end{proposition}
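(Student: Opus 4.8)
The plan is to combine the extension operator and plurisubharmonicity criterion of \S\ref{sec:constr_psh_Eomega_C} with a modification of the weight carried out in \S\ref{sec:modifying_weight_funcs}. Recall that \S\ref{sec:constr_psh_Eomega_C} attaches to a weight $\widetilde\omega$ on $\R^d$ an extension $E\widetilde\omega\colon \C^d \to \R$ generalizing the Poisson extension, and that Proposition~\ref{prop:Ew_is_psh} asserts that $E\widetilde\omega + C|\mb y|$ is plurisubharmonic, with $C$ quantitatively controlled, provided $\widetilde\omega$ obeys the uniform-on-lines Hilbert bound \ref{item_Ew_psh:hilbert} and the line-integral second-derivative bound \ref{item_Ew_psh:second_deriv}. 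The weight $\omega$ of Proposition~\ref{prop:BM_step1_prop} already satisfies \ref{item_Ew_psh:hilbert}: the symbol estimate (\ref{eq:regularity_condition_0to3}) for $1 \le a \le 2$ bounds $\|H[\phi']\|_\infty \lesssim C_{\reg}$ uniformly over the restrictions $\phi$ of $\omega$ to lines. What $\omega$ need not satisfy is \ref{item_Ew_psh:second_deriv}, because the hypothesis (\ref{eq:higher_dim_BM_growth_cond}) bounds only the weighted dyadic sum $\int_0^\infty G^*(r)(1+r^2)^{-1}\,dr$ --- the line-integral mass on average over scales, not scale by scale.

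The crux, and the step I expect to be the main obstacle, is the construction in \S\ref{sec:modifying_weight_funcs} of a minorant $\widetilde\omega \le \omega$ (hence $\le 0$), vanishing on $\{|\mb x| \le 2\}$, with Kohn--Nirenberg estimates comparable to those of $\omega$ and Lipschitz constant $\lesssim C_d C_{\reg}$, which in addition satisfies \ref{item_Ew_psh:second_deriv} with constant $\lesssim C_d\max(C_{\reg},C_{\gr})$. I would build $\widetilde\omega$ by a dyadic decomposition in $|\mb x|$: on the annulus $|\mb x|\sim 2^j$, replace $\omega$ by a smoothed lower envelope which is flat at the angular scale where $\omega$ can grow, pushing it down by an amount $\delta_j$ governed by the scale-$2^j$ piece of $G^*$, and then glue the pieces with a smooth partition of unity. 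The $a=0$ case of (\ref{eq:regularity_condition_0to3}) already forces $G^*(2^j) \lesssim C_{\reg}2^j$, so the flattening costs slope only $\lesssim C_{\reg}$ at each scale and $\widetilde\omega$ stays $\lesssim C_{\reg}$-Lipschitz; the growth hypothesis enters only when the dyadic contributions to \ref{item_Ew_psh:second_deriv} are summed, where schematically $\sum_j 2^{-j}\delta_j \lesssim \int_0^\infty G^*(r)(1+r^2)^{-1}\,dr \le C_{\gr}$. The delicate point is to arrange the flattening so that it simultaneously keeps $\widetilde\omega \le \omega$, preserves all three Kohn--Nirenberg derivative estimates, and genuinely controls the complex Hessian of $E\widetilde\omega$ well enough that adding $C|\mb y|$ restores plurisubharmonicity. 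This is where essentially all of the work lies.

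Granting such a $\widetilde\omega$, the proposition follows quickly. Apply Proposition~\ref{prop:Ew_is_psh} to $\widetilde\omega$ to obtain a continuous plurisubharmonic $u := E\widetilde\omega + C|\mb y|$ with $C \le C_d\max(C_{\reg},C_{\gr})$; here $|\mb y| = |\Im \mb z|$ is plurisubharmonic since its restriction to any complex line is the $\ell_2$-norm of an affine function of the line parameter, hence convex. The extension reproduces boundary values, so $u|_{\R^d} = \widetilde\omega \le \omega$, which is (\ref{eq:u_less_omega_step1prop}), and the same identity with $\widetilde\omega = 0$ on $\{|\mb x|\le 2\}$ gives (\ref{eq:u_zero_step1prop}), while Lipschitz-ness of $\widetilde\omega$ gives (\ref{eq:u_Lip_step1prop}) with $C_{\Lip} \le C_d C_{\reg}$. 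For (\ref{eq:u_bdd_y_step1prop}), enlarge $C$ if necessary so that $C \ge C_{\reg}$ and use that $E\widetilde\omega$ is $\lesssim C_{\reg}$-Lipschitz on $\C^d$ (from (\ref{eq:regularity_condition_0to3}) and the mapping properties of $E$ established in \S\ref{sec:constr_psh_Eomega_C}):
\begin{align*}
    u(\mb x) = E\widetilde\omega(\mb x) &\le E\widetilde\omega(\mb x + i\mb y) + C_{\reg}|\mb y| \le E\widetilde\omega(\mb x + i\mb y) + C|\mb y| = u(\mb x + i\mb y), \\
    u(\mb x + i\mb y) = E\widetilde\omega(\mb x+i\mb y) + C|\mb y| &\le E\widetilde\omega(\mb x) + (C_{\reg} + C)|\mb y| = u(\mb x) + \rho|\mb y|,
\end{align*}
so (\ref{eq:u_bdd_y_step1prop}) holds with $\rho = C_{\reg} + C \le C_d\max(C_{\reg},C_{\gr})$. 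Thus the whole proposition is reduced to the weight-modification statement, which is the content of \S\ref{sec:modifying_weight_funcs}.
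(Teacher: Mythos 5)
There is a genuine gap: the heart of Proposition \ref{prop:BM_step1_prop} is precisely the construction of the modified weight, and your proposal stops where that construction would have to begin. You correctly reduce the problem to producing $\widetilde\omega\le\omega$ satisfying conditions \ref{item_Ew_psh:hilbert} and \ref{item_Ew_psh:second_deriv} of Proposition~\ref{prop:Ew_is_psh}, but the ``smoothed lower envelope, flat at the angular scale, pushed down by $\delta_j$'' is only a heuristic; you yourself note that keeping $\widetilde\omega\le\omega$, preserving the symbol bounds, and actually controlling the line integrals of $D^2\widetilde\omega$ ``is where essentially all of the work lies,'' and then defer to \S\ref{sec:modifying_weight_funcs} --- which is exactly the proof you were asked to supply, so the argument is circular. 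The missing idea in the paper is specific: decompose $\omega=\sum_k\psi_k\omega=\sum_k\omega_k$ dyadically, set $q_k=\inf_{\mbhat v}\pi_{S^{d-1}}\omega_k(\mbhat v)$, and correct by $g_k=p_k^{-1}\psi_k(\mb x)\bigl(q_k-\pi_{S^{d-1}}\omega_k(\mbhat x)\bigr)\le 0$ so that the weighted spherical projection $\pi_{S^{d-1}}\widetilde\omega_k\equiv q_k$ is \emph{exactly} constant; then Lemma~\ref{lem:second_deriv_spherical_prj} shows the angular second-derivative term vanishes and $\int_0^\infty\langle D^2\widetilde\omega_k(t\mbhat y)\mbhat v,\mbhat v\rangle\,dt=q_k$ for lines through the origin, with $\sum_k|q_k|\lesssim C_{\gr}$ coming from (\ref{eq:higher_dim_BM_growth_cond}); lines not through the origin are translated to the origin at cost $C_{\reg}|\mb x_0|2^{-k}$ using the third-derivative bound, and the finitely many scales comparable to $|\mb x_0|$ are handled by putting absolute values inside the integral. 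None of this mechanism (nor any workable substitute) appears in the proposal.

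A second, concrete error: condition \ref{item_Ew_psh:hilbert} does \emph{not} follow from (\ref{eq:regularity_condition_0to3}) alone with constant $\lesssim C_{\reg}$. For a smoothed version of $\omega(\mb x)=-|\mb x|$ (cut off to vanish on $|\mb x|\le 2$), the symbol bounds hold with $C_{\reg}=O(1)$, yet $H[\omega|_\ell']$ is logarithmically unbounded; the tail $\int_{|t|\ge 1}|\omega(\mb x+t\mbhat y)|\,t^{-2}dt$ is exactly what the growth condition must control. So \ref{item_Ew_psh:hilbert} also uses (\ref{eq:higher_dim_BM_growth_cond}), and moreover must be verified for the \emph{modified} weight $\widetilde\omega$, which the paper does by a separate dyadic estimate whose far-field terms are again bounded by the $|q_k|$. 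The same issue undercuts your claim that $E\widetilde\omega$ is $C_{\reg}$-Lipschitz in the $i\mb y$ direction: the inequality $|E\widetilde\omega(\mb x+i\mb y)-E\widetilde\omega(\mb x)|\le|\mb y|\,\|H[\widetilde\omega|_\ell']\|_\infty$ is how (\ref{eq:u_bdd_y_step1prop}) is obtained, and it requires the Hilbert bound, not just regularity. Finally, a smaller point you would still need: $\widetilde\omega$ is not compactly supported, so Proposition~\ref{prop:Ew_is_psh} is applied to the partial sums $\widetilde\omega_{\le k}$ and one passes to the limit via Lemma~\ref{lem:Ew_conv_cpct_subsets} to get plurisubharmonicity of $E\widetilde\omega+C|\mb y|$.
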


Now we turn to the analytic BM problem. H\"ormander's $L^2$ theory of the $\delbar$ equation applies just as well as in one dimension, which we establish in the next Proposition. 
\begin{proposition}[\ref{BM_higher_dim_step:2}]\label{prop:psh_to_bounded_fourier}
Let $u: \C^d \to \R$ be a plurisubharmonic function such that $u|_{\R^d} \leq 0$ and $u$ satisfies \eqref{eq:u_zero_step1prop}, \eqref{eq:u_Lip_step1prop}, \eqref{eq:u_bdd_y_step1prop} with constants $C_{\Lip}$ and $\rho$.
Then there exists an entire function $f: \C^d \to \C$ such that 
\begin{align*}
    |f(\mb x + i\mb y)| &\leq A\, e^{2\rho |\mb y|} && \text{for some $A > 0$}, \\ 
    |f(\mb x)| &\geq \frac{1}{2} && \text{for all $\mb x \in \B_{r_{\min}}$},\\ 
    |f(\mb x)| &\leq C\, e^{u(\mb x)}&& \text{for $\mb x \in \R^d$}, \\ 
    \int_{\R^d} |f(\mb x)|^2\, d\mb x &< \infty.
\end{align*}
We may take
\begin{align*}
    r_{\min} &= c_d\, \min(\rho, \rho^{-1}), \\ 
    C &= C_d\, e^{C_{\Lip}}\, \max(\rho^{-C_d}, e^{2\rho}).
\end{align*}
\end{proposition}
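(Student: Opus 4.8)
The plan is to follow Bourgain's unpublished approach via H\"ormander's $L^2$ estimate for the $\delbar$ equation, which is precisely the route advertised for Step 2. The idea is to build $f$ as a perturbation of the constant function $1$: write $f = \chi - v$, where $\chi$ is a smooth cutoff equal to $1$ on a small ball $\B_{r_0}$ and supported in $\B_{2r_0}$ (with $r_0 \sim \min(\rho,\rho^{-1})$), and $v$ solves $\delbar v = \delbar \chi$ so that $f$ is entire. We must produce a solution $v$ which is small on $\R^d$ (in an $L^2$ sense weighted against $e^{-u}$) and which does not grow too fast in the imaginary directions. For the weight in H\"ormander's theorem I would use $\varphi(\mb z) = u(\mb z) + N\log\langle \mb z\rangle + (\text{quadratic confining term in }\mb y)$, or more simply $u(\mb z) + 2\rho\,\psi(\mb y)$ for a suitable smooth approximation $\psi$ of $|\mb y|$ together with a logarithmic term to force $L^2$-integrability near infinity; the plurisubharmonicity of $u$, together with convexity of the added terms, makes $\varphi$ plurisubharmonic with a uniform lower bound on $\del\delbar\varphi$ on the support of $\delbar\chi$. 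Then H\"ormander gives $v$ with $\int_{\C^d} |v|^2 e^{-\varphi} \lesssim \int_{\C^d} |\delbar\chi|^2 e^{-\varphi}/(\text{lower bound})$, and since $\delbar\chi$ is supported in the fixed ball $\B_{2r_0}$ where $u = 0$ by~\eqref{eq:u_zero_step1prop}, the right side is bounded by $C_d\,\rho^{-C_d}$ or so.

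Next I would convert the $L^2$ bound into the three pointwise statements. For the non-vanishing near the origin: $v$ is holomorphic on $\B_{r_0}$ (where $\delbar\chi = 0$), so $|v(\mb x)|^2$ obeys a sub-mean-value inequality over a small polydisc, and since $e^{-\varphi}$ is comparable to a constant there, $|v|^2 \lesssim$ (the $L^2$ bound) pointwise on, say, $\B_{r_0/2}$; choosing the constants in $\chi$ and absorbing a harmless rescaling we arrange $|v| \le 1/2$ on $\B_{r_{\min}}$, hence $|f| = |1 - v| \ge 1/2$ there. For the decay on $\R^d$: on $\R^d$ we have $\varphi = u$ (up to the logarithmic term which only helps), and again $f = -v$ is holomorphic away from $\B_{2r_0}$, so a sub-mean-value estimate over a unit polydisc centered at $\mb x \in \R^d$ gives $|f(\mb x)|^2 e^{-u(\mb x)} \lesssim \int |f|^2 e^{-\varphi}$ provided $u$ does not vary too much over that polydisc --- this is exactly where the Lipschitz bound~\eqref{eq:u_Lip_step1prop} and the growth control~\eqref{eq:u_bdd_y_step1prop} enter, contributing the $e^{C_{\Lip}}$ factor and controlling $u$ on the nearby imaginary slab. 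For the Paley--Wiener bound: $f = \chi - v$ is entire, $\chi$ is compactly supported, and $v$ inherits from the weight $\varphi$ (via another sub-mean-value argument or directly from the weighted $L^2$ bound with the $2\rho|\mb y|$ term) the estimate $|f(\mb x + i\mb y)| \le A e^{2\rho|\mb y|}$; the factor $2\rho$ rather than $\rho$ is the slack we build into $\psi$.

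The main obstacle I anticipate is arranging the weight $\varphi$ so that \emph{simultaneously} (i) $\varphi$ is plurisubharmonic with a quantitatively controlled strictly positive lower bound for $\del\delbar\varphi$ on $\supp\delbar\chi$ (needed for H\"ormander to give a small solution), (ii) $\varphi$ equals $u$ on $\R^d$ up to an $O(\log\langle\mb x\rangle)$ term so the decay $|f|\le Ce^{u}$ survives, and (iii) $\varphi(\mb x+i\mb y) \le u(\mb x) + 2\rho|\mb y| + O(\log\langle\mb z\rangle)$ so the Paley--Wiener bound is not worse than $2\rho|\mb y|$. The tension is between (i), which wants $\varphi$ strictly plurisubharmonic and thus wants to add a genuinely convex bump, and (iii), which forbids adding anything growing faster than $|\mb y|$ in the imaginary directions. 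The resolution is that the bump needed for (i) can be localized to the fixed ball $\B_{2r_0}$ (a standard trick: add $\epsilon\eta(\mb z)|\mb z|^2$ for a cutoff $\eta$ supported near that ball, or simply use that $u$ itself, being plurisubharmonic, already gives $\del\delbar\varphi \ge 0$ and then gain strict positivity from the $N\log\langle\mb z\rangle$ term on the compact set where $\delbar\chi\ne0$), so it never interferes with the far-field behavior. Tracking all the $d$-dependent and $\rho$-dependent constants through these sub-mean-value arguments to land exactly on $r_{\min} = c_d\min(\rho,\rho^{-1})$ and $C = C_d e^{C_{\Lip}}\max(\rho^{-C_d}, e^{2\rho})$ will be the bulk of the routine work.
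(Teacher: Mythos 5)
Your overall route---$f=\chi-v$ with $\delbar v=\delbar\chi$, H\"ormander's estimate (Theorem \ref{thm:hormander_delbar}), then sub-mean-value conversions (Lemma \ref{lem:pointwise_bd_subharmonicity})---is the paper's route, but with your choice of weight two of the three conclusions do not actually follow. First, the lower bound near the origin. With $\varphi=u+N\log\langle\mb z\rangle+(\text{confining term in }\mb y)$, the factor $e^{-\varphi}$ is comparable to $1$ near $\mb z=0$ (note $\log\langle\mb z\rangle$ is \emph{bounded} at the origin), so the weighted $L^2$ bound carries no information forcing $v$ to be small there; and the H\"ormander right-hand side $\int|\delbar\chi|^2e^{-\varphi}/\kappa$ is of size comparable to $\rho^{-1}$ (or worse, uniformly in $\rho$), not small, so the sub-mean-value step only gives $|v|\lesssim\mathrm{const}$ near $0$. ``Choosing the constants in $\chi$'' cannot repair this: making $\kappa$ large on $\supp\delbar\chi$ by inflating $N$ is paid back by the factor $\sup e^{\varphi/2}$ in the sub-mean-value bound, and one is left needing $N$ to grow with $\rho$, which then destroys the estimate on $\R^d$ (see below). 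The paper instead places a genuine singularity at the origin, the term $20d\,\log|\mb z|_{\infty}$ in $\varphi=2u+20d\log|\mb z|_\infty+\rho u_0+\frac{\rho}{2}(\langle\mb y\rangle-1)$; this forces the solution to satisfy $|g(\mb x)|\leq C_d\rho^{-1/2}|\mb x|^{9d}e^{3\rho|\mb x|}$ near $0$, which is $\leq 1/2$ precisely on $\B_{r_{\min}}$ with $r_{\min}=c_d\min(\rho,\rho^{-1})$. Your $\log\langle\mb z\rangle$ cannot play this role.

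Second, and this is the crux you get backwards when you say the logarithmic term ``only helps'' on $\R^d$: in the pointwise conversion $|f(\mb x)|\lesssim\|f e^{-\varphi/2}\|_{L^2}\,\sup_{\B_1(\mb x)}e^{\varphi/2}$, any term added to $\varphi$ that \emph{grows} along $\R^d$ (such as $N\log\langle\mb x\rangle$, or the paper's $20d\log|\mb x|_\infty$) worsens the decay estimate by a polynomial factor in $|\mb x|$. Since the hypotheses allow $u$ to vanish on large regions, this factor cannot be absorbed into $e^{u(\mb x)}$, and the required bound $|f(\mb x)|\leq C e^{u(\mb x)}$ with $C$ independent of $\mb x$ fails. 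The missing ingredient is the compensating plurisubharmonic function $\rho u_0$, where $u_0=c_d E\omega_0+\frac14|\mb y|$ and $\omega_0$ is the radial weight (\ref{eq:defn_of_omega_0}): on $\R^d$ it decays like $-c_d|\mb x|/(\log(2+|\mb x|))^2$, enough to swallow $20d\log(1+|\mb x|)$ at the cost of a factor $\max(1,\log\rho^{-1})$ in $C$, while in the imaginary directions it contributes only $\frac{\rho}{2}|\mb y|$, so the Paley--Wiener exponent stays at $2\rho$. Producing $u_0$ is itself a (radial) instance of the \ref{BM_higher_dim_step:1} problem, i.e.\ it uses Proposition \ref{prop:BM_step1_prop}; your weight has no substitute for it. Two smaller defects: the weight must contain $2u$, not $u$ (the conversion yields $e^{\varphi/2}$, so $u$ alone only gives $|f|\lesssim e^{u/2}$, which is weaker than $e^{u}$ since $u\leq0$), and the ``standard trick'' of adding $\epsilon\,\eta(\mb z)|\mb z|^2$ with a cutoff $\eta$ is not plurisubharmonic; the paper gets global strict positivity from $\frac{\rho}{2}(\langle\mb y\rangle-1)$, whose complex Hessian is $\geq\frac{\rho}{8}\langle\mb y\rangle^{-3}$ everywhere.
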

We prove Proposition \ref{prop:psh_to_bounded_fourier} in \S\ref{sec:constr_entire_from_psh} following Bourgain's one dimensional argument.

Now we can finish the proof of our higher dimensional Beurling--Malliavin multiplier theorem by combining our solutions to \ref{BM_higher_dim_step:1} and \ref{BM_higher_dim_step:2}. 

\begin{proof}[Proof of Theorem \ref{thm:higher_dim_BM}]
Let $\omega: \R^d \to \R_{\leq 0}$ satisfy the conditions of Theorem \ref{thm:higher_dim_BM} with constants $C_{\reg}$ and $C_{\gr}$, and let $\sigma > 0$ be a parameter. 

By Proposition \ref{prop:BM_step1_prop}, there exists a plurisubharmonic function $u: \C^d \to \R$ which satisfies the conditions of Proposition \ref{prop:psh_to_bounded_fourier} with constants $C_{\Lip}$ and $\rho$ where
\begin{equation*}
    C_* = \max(C_{\Lip}, \rho) \leq C_d \max(C_{\reg}, C_{\gr}).
\end{equation*}
Let $u_{\sigma} = \frac{\sigma}{C_*} u$. 
The conditions of Proposition \ref{prop:psh_to_bounded_fourier} are satisfied for $u_{\sigma}$ with 
\begin{align*}
    C_{\Lip}' = \rho' = \sigma,
\end{align*}
so applying that Proposition in conjunction with the Paley--Wiener criterion (Theorem~\ref{thm:paley_wiener}), we see that there exists $f \in L^2(\R^d)$ with $\supp \hat f \subset \B_{\sigma/\pi} \subset \B_{\sigma}$ and satisfying the necessary estimates with $c = 1/C_*$. 
\end{proof}

\section{Exact plurisubharmonic extensions}\label{sec:constr_psh_Eomega_C}
The one dimensional approach to the \ref{BM_1D_exact_step:1} problem is to harmonically extend $\omega$ to each half plane separately. Given $\omega: \R \to \R$, the Poisson kernel gives an explicit solution:
\begin{align}
    E\omega(x+iy) &= \int_{-\infty}^{\infty} \omega(x+t) P_y(t)\, dt\quad &&P_y(t) = \frac{1}{\pi} \frac{|y|}{y^2+t^2}, \nonumber\\ 
    &= \int_{-\infty}^{\infty} \frac{\omega(x+ty)}{1+t^2}\, \frac{dt}{\pi} &&\text{by change of variables}. \label{eq:poisson_ext_1D}
\end{align}

In higher dimensions we define an extension operator taking functions on $\R^d$ to functions on $\C^d$ by 
\begin{equation}\label{eq:defn_of_extension_op}
    E\omega(\mb x + i \mb y) = \int_{-\infty}^{\infty} \frac{\omega(\mb x + t \mb y)}{1+t^2}\, \frac{dt}{\pi}.
\end{equation}
If $\omega$ is Lipschitz and satisfies the growth condition \eqref{eq:higher_dim_BM_growth_cond} then the integral is finite, see Lemma \ref{lem:ext_lip_func_with_growth}.
The operator $\omega \to E\omega$ separately harmonically extends $\omega$ to every real-linear complex line
\begin{equation*}
    \ell_{\mb x, \mb y} = \{\mb x + z\mb y\, :\, z \in \C\}\subset \C^d,\quad (\mb x, \mb y) \in \R^d \times (\R^d \setminus \{0\}).
\end{equation*}
Equivalently, $E\omega$ is the unique bounded solution to the PDE 
\begin{equation}\label{eq:pde_for_Eomega}
\begin{cases}
    \langle (\del \delbar E\omega)(\mb x + i \mb y)\mb y, \mb y\rangle = 0 & \text{for } \mb x + i \mb y \in \C^d \setminus \R^d, \\ 
    E\omega(\mb x) = \omega(\mb x) & \text{for } \mb x \in \R^d.
\end{cases}
\end{equation}
It is not obvious at first that all these separate harmonic extensions combine to give a nice global extension, but equation \eqref{eq:defn_of_extension_op} shows that they do. 

Given a weight $\omega$, 
\begin{equation}
    u = E\omega + C|\mb y|
\end{equation}
will be our candidate plurisubharmonic function. Unlike the one dimensional case $E\omega$ is not plurisubharmonic away from $\R^d$, so adding the term $C|\mb y|$ will have to both make $u$ satisfy the sub-mean value property \eqref{eq:sub_mean_value_prop_psh} on complex disks centered at points of $\R^d$ and points off of $\R^d$.
Analyzing this equation leads to the following proposition which solves the \ref{BM_higher_dim_exact_step:1} problem. For $\ell = \{\mb x + t\hat{\mb y}\, :\, t \in \R\}$ a line in $\R^d$, let $\omega|_{\ell}(t) = \omega(\mb x + t \hat{\mb y})$ be $\omega$ restricted to $\ell$ (this function just depends on the line itself up to translation and reflection). 

\begin{proposition}[\ref{BM_higher_dim_exact_step:1}]\label{prop:Ew_is_psh}
Let $\omega: \R^d \to \R_{\leq 0}$ be a $C^2$ and compactly supported function satisfying 

\begin{enumerate}[leftmargin=*,labelindent=
\parindent,=itemsep=0pt,label=(\roman*), ref=(\roman*)]
    \item 
    For every line $\ell \subset \R^d$, 
    \begin{equation}\label{eq:Ew_psh_hilbert_deriv_bdd}
        \| H[\omega|_{\ell}'] \|_{\infty} \leq C_1.
    \end{equation}
    \label{item_Ew_psh:hilbert}
    \item For every $\mb x \in \R^d$, $\hat{\mb y}$ a unit vector, and $\hat {\mb v}$ a unit vector with $\mbhat y \perp \mbhat v$,
    \begin{equation}\label{eq:second_deriv_int_over_line}
        \int_{-\infty}^{\infty} \langle (D^2 \omega(\mb x + t \mbhat y))\mbhat v, \mbhat v\rangle \, \frac{dt}{\pi} \geq -C_2.
    \end{equation}
    \label{item_Ew_psh:second_deriv}
\end{enumerate}

If $C \geq \max(C_1, C_2)$, then
\begin{equation*}
    u (\mb x + i \mb y) = E\omega(\mb x + i \mb y) + C| \mb y |
\end{equation*}
is plurisubharmonic on $\C^d$ and continuous. We have 
\begin{equation}\label{eq:u_bounds_both_dir_Ew_psh}
    u(\mb x) \leq u(\mb x + i \mb y) \leq u(\mb x) + 2C|\mb y|. 
\end{equation}

\end{proposition}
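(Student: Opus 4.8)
The main obstacle will be understanding $\del\delbar E\omega$ on $\C^d\setminus\R^d$, so I would start there. Since $\omega$ is $C^2$ and compactly supported, for $\mb y$ bounded away from $0$ the $t$-integral defining $E\omega(\mb x+i\mb y)$ runs over a compact set, so $E\omega\in C^2(\C^d\setminus\R^d)$ and we may differentiate under the integral. A direct computation of $\tfrac14(\partial_{x_j}\partial_{x_k}+\partial_{y_j}\partial_{y_k})E\omega$ collapses the weight $\tfrac{1+t^2}{1+t^2}$, while the antisymmetric part $\partial_{x_j}\partial_{y_k}-\partial_{x_k}\partial_{y_j}$ of $E\omega$ vanishes by equality of mixed partials; so for $\mb w=\mb a+i\mb b$ with $\mb a,\mb b\in\R^d$,
\begin{equation*}
  \langle(\del\delbar E\omega)(\mb x+i\mb y)\,\mb w,\mb w\rangle=\frac14\int_{-\infty}^{\infty}\bigl(\langle D^2\omega(\mb x+t\mb y)\mb a,\mb a\rangle+\langle D^2\omega(\mb x+t\mb y)\mb b,\mb b\rangle\bigr)\,\frac{dt}{\pi},
\end{equation*}
which after the substitution $t\mapsto t/|\mb y|$ becomes an integral of $D^2\omega$ along the line through $\mb x$ in the direction $\hat{\mb y}=\mb y/|\mb y|$, divided by $|\mb y|$.

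The decisive point is a cancellation: decomposing $\mb a=\alpha\hat{\mb y}+\mb a'$ with $\mb a'\perp\hat{\mb y}$, the integrals $\int\langle D^2\omega(\mb x+t\hat{\mb y})\hat{\mb y},\hat{\mb y}\rangle\,dt$ and $\int\langle D^2\omega(\mb x+t\hat{\mb y})\hat{\mb y},\mb a'\rangle\,dt$ both vanish, since each integrand is $\tfrac{d}{dt}$ of a compactly supported function along that line. Thus $\langle(\del\delbar E\omega)\,\mb w,\mb w\rangle$ depends only on the components $\mb a',\mb b'$ of $\mb a,\mb b$ orthogonal to $\hat{\mb y}$, and hypothesis \ref{item_Ew_psh:second_deriv} gives $\langle(\del\delbar E\omega)\,\mb w,\mb w\rangle\geq-\tfrac{C_2}{4|\mb y|}(|\mb a'|^2+|\mb b'|^2)$. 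On the other hand $C|\mb y|$ depends only on $\mb y$, with real Hessian $\tfrac{C}{|\mb y|}$ times the orthogonal projection onto $\hat{\mb y}^{\perp}$, so $\langle(\del\delbar(C|\mb y|))\,\mb w,\mb w\rangle=\tfrac{C}{4|\mb y|}(|\mb a'|^2+|\mb b'|^2)$ --- degenerate exactly along $\hat{\mb y}$, matching the degeneracy of $E\omega$ (harmonic on $\ell_{\mb x,\hat{\mb y}}$). Adding, $\del\delbar u\geq0$ on $\C^d\setminus\R^d$ as soon as $C\geq C_2$. For \eqref{eq:u_bounds_both_dir_Ew_psh}: restricting to $\ell_{\mb x,\hat{\mb y}}$ through $\mb x\in\R^d$, the defining property of $E$ gives $u(\mb x+i\mb y)=Eg(i|\mb y|)+C|\mb y|$, where $g(t)=\omega(\mb x+t\hat{\mb y})$, $Eg$ is the one-dimensional Poisson extension \eqref{eq:poisson_ext_1D}, and $u(\mb x)=g(0)=\omega(\mb x)$; since $g\in C^2$ is compactly supported, $\beta\mapsto\partial_\beta Eg(i\beta)$ extends continuously to $\beta=0$ and equals the Poisson extension of $-H[g']$, hence is bounded by $\|H[g']\|_{\infty}\leq C_1$ by hypothesis \ref{item_Ew_psh:hilbert}, and integrating from $0$ to $|\mb y|$ gives $|Eg(i|\mb y|)-\omega(\mb x)|\leq C|\mb y|$, that is, $\omega(\mb x)\leq u(\mb x+i\mb y)\leq\omega(\mb x)+2C|\mb y|$. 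Combined with continuity of $\omega$ this makes $u$ continuous on all of $\C^d$ (it is already $C^2$, hence continuous, off $\R^d$).

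To conclude plurisubharmonicity on $\C^d$ I would show $u$ restricts to a subharmonic function on every affine complex line $L=\{\mb z_0+\zeta\mb v:\zeta\in\C\}$; with continuity, this is the definition. Since $L\cap\R^d$ is an affine real subspace of $L$ and a complex line cannot lie in $\R^d$, it is empty, a single point, or a real line. If it is empty, $L\subset\C^d\setminus\R^d$ and $\Delta_\zeta(u|_L)=4\langle(\del\delbar u)\mb v,\mb v\rangle\geq0$ by the first step. If it is a single point, $u|_L$ is $C^2$ and subharmonic off that point and continuous there, so the singularity is removable and $u|_L$ is subharmonic. If it is a real line, then after rotating the parameter $L=\{\mb p+\zeta\mb u:\zeta\in\C\}$ with $\mb p,\mb u$ real, and $u|_L(\zeta)=Eg(\zeta)+C|\Im\zeta|$ with $g(t)=\omega(\mb p+t\mb u)$ --- this is exactly the one-dimensional Beurling--Malliavin function, whose distributional Laplacian $2\bigl(C-H[g'](\,\cdot\,)\bigr)\,\delta_{\{\Im\zeta=0\}}$ is a nonnegative measure since $\|H[g']\|_{\infty}\leq C_1\leq C$ by hypothesis \ref{item_Ew_psh:hilbert}. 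In every case $u|_L$ is subharmonic, so $u$ is plurisubharmonic.

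I expect the crux to be the Hessian computation and the cancellation in the first two steps: recognizing that $\del\delbar E\omega$ off $\R^d$ sees only the transverse second derivatives of $\omega$ along lines is what singles out hypothesis \ref{item_Ew_psh:second_deriv} as the natural condition, and it explains why the single term $C|\mb y|$ --- flat precisely in the direction $\hat{\mb y}$ --- can repair positivity both away from $\R^d$ and, via the classical one-dimensional computation and hypothesis \ref{item_Ew_psh:hilbert}, across $\R^d$. The case analysis over complex lines meeting $\R^d$ is the remaining point requiring care, but it reduces cleanly to the one-dimensional picture.
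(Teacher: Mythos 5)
Your proof is correct, and the part where it diverges from the paper is worth noting. The Hessian computation on $\C^d\setminus\R^d$, the observation that $\partial\bar\partial E\omega$ annihilates the $\hat{\mb y}$-direction by integration by parts, and the derivation of the two-sided bound \eqref{eq:u_bounds_both_dir_Ew_psh} from the one-dimensional Poisson extension along $\ell_{\mb x,\hat{\mb y}}$ all match the paper's Lemma~\ref{lem:props_Eomega} and the first half of its proof. Where you differ is in proving plurisubharmonicity across $\R^d$. The paper verifies the sub-mean value inequality \eqref{eq:psh_rad_dep_z} at each $\mb x\in\R^d$ by Taylor expanding $\omega$ to second order and using the $\varepsilon|\Im(e^{i\theta}\mb v)|$ term to absorb the quadratic error for $|\mb v|$ small; because this requires strict slack, the paper first proves the result for $C>\max(C_1,C_2)$ and then passes to the limit. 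You instead restrict $u$ to an arbitrary affine complex line $L$ and observe that $L\cap\R^d$ is either empty, a point, or a real line: in the first case the Hessian computation applies directly, in the second you invoke the removable singularity theorem for continuous functions that are subharmonic off a point, and in the third you reduce to the genuinely one-dimensional situation, where $u|_L = Eg + C|\Im\zeta|$ and the distributional Laplacian $2(C - H[g'])\,\delta_{\{\Im\zeta=0\}}$ is a nonnegative measure by hypothesis~\ref{item_Ew_psh:hilbert}. This avoids the $\varepsilon$-limit entirely and works directly at the endpoint $C=\max(C_1,C_2)$, at the cost of invoking two standard facts (removable singularities for subharmonic functions, and the distributional criterion $\Delta v\geq 0$ for continuous $v$). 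Both routes are sound; yours is somewhat more conceptual in that it reduces the edge case to the classical one-dimensional Beurling--Malliavin picture rather than re-proving a sub-mean value estimate from scratch.
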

Condition \ref{item_Ew_psh:hilbert} implies that $E\omega + C|\mb y|$ satisfies the sub-mean value property for complex disks centered at points of $\R^d$. Condition \ref{item_Ew_psh:second_deriv} is new to higher dimensions and it implies that $E\omega + C|\mb y|$ is plurisubharmonic on $\C^d \setminus \R^d$. 

\begin{remarks*}
\begin{enumerate}[leftmargin=*,label=\arabic*.]
    \item It turns out that in $d \geq 2$ condition \ref{item_Ew_psh:hilbert} essentially follows from condition \ref{item_Ew_psh:second_deriv}. This observation is due to Semyon Dyatlov, see \S\ref{subsec:hilbert_from_second_deriv}.
    \item Proposition \ref{prop:Ew_is_psh} is strong enough to prove Proposition \ref{prop:BM_step1_prop} in the special case of radial weights $\omega(\mb x) = f(|\mb x|)$. 
\end{enumerate}
\end{remarks*}

\subsection{Basic properties of the extension operator}
Let 
\begin{equation}\label{eq:defn_E_R}
    E_R\omega(\mb x + i \mb y) = \int_{|t| \leq R} \frac{\omega(\mb x + t \mb y)}{1+t^2}\, \frac{dt}{\pi}
\end{equation}
be the partial integral for $E\omega$. 
\begin{lemma}\label{lem:ext_lip_func_with_growth}
Let $\omega: \R^d \to \R$ be Lipschitz with constant $C_{\Lip}$ and satisfy the growth condition \eqref{eq:higher_dim_BM_growth_cond} with constant $C_{\gr}$. Then the integral defining $E\omega$ is absolutely convergent and $E_R\omega \to E\omega$ uniformly on compact subsets.
\end{lemma}
\begin{proof}
First of all, 
\begin{align*}
    \int \frac{|\omega(\mb x + t\mb y)|}{1+t^2}\, \frac{dt}{\pi} \leq C_{\Lip}|\mb x| + \int \frac{|\omega(t\mb y)|}{1+t^2}\, \frac{dt}{\pi} < \infty
\end{align*}
using both the Lipschitz property and the growth condition.

Let $R\geq 1$ and set 
\begin{align*}
    \mathrm{\Err}_R(\mb x + i\mb y) = \int_{|t|\geq R} \frac{|\omega(\mb x + t \mb y)|}{1+t^2}\, \frac{dt}{\pi}.
\end{align*}
Using that $\omega$ is Lipschitz we have
\begin{align*}
    \mathrm{\Err}_R(\mb x + i\mb y) \leq \frac{2C_{\Lip}|\mb x|}{R} + \int_{|t|\geq R} \frac{|\omega(t \mb y)|}{1+t^2}\, \frac{dt}{\pi}.
\end{align*}
We have 
\begin{align*}
    \int_{|t|\geq R} \frac{|\omega(t \mb y)|}{1+t^2}\, \frac{dt}{\pi} &= \int_{|t|\geq R} \frac{|\omega(t \mb y)|}{1+t^2} 1_{|t\mb y| \leq 1}\, dt + \int_{|t|\geq R} \frac{|\omega(t \mb y)|}{1+t^2} 1_{|t\mb y| \geq 1}\, dt.
\end{align*}
The first term is $\lesssim (|\omega(0)| + C_{\Lip})/R$. For the second term we replace $R$ by $\widetilde R = \max(R, 1/|\mb y|)$ and estimate
\begin{align*}
    \int_{|t|\geq \widetilde R} \frac{|\omega(t \mb y)|}{1+t^2} \, dt &\leq \int_{|t|\geq \widetilde R} \frac{|\omega(t \mb y)|}{t^2} \, dt \lesssim \int_{|t|\geq \widetilde R} \int_{1/2}^2\frac{|\omega(t \mb y)|}{(t/r)^2} \, drdt \\ 
    &\lesssim \int_{|t|\geq \widetilde R} \int_{1/2}^2\frac{|\omega(r t \mb y)|}{t^2} \, drdt \lesssim \int_{\widetilde R}^{\infty}\frac{G^*(|t \mb y|)}{t^2} \, dt \\ 
    &\lesssim \int_{\widetilde R|\mb y|}^{\infty}|\mb y| \frac{G^*(t)}{t^2} \, dt \lesssim |\mb y| \int_{\widetilde R|\mb y|}^{\infty} \frac{G^*(t)}{1+t^2}\, dt \\ 
    &\lesssim R^{-1/2} \int_{0}^{\infty} \frac{G^*(t)}{1+t^2}\, dt + |\mb y| \int_{R^{1/2}}^{\infty} \frac{G^*(t)}{1+t^2}\, dt
\end{align*}
where in the last line we split into the two cases $|\mb y| \leq R^{-1/2}$ and $|\mb y| \geq R^{-1/2}$. 
Suppose that $|\mb y|, |\mb x| \leq M$. Then combining our estimates, 
\begin{align*}
    \mathrm{\Err}_R(\mb x + i\mb y) \lesssim \frac{C_{\Lip}M}{R} + \frac{|\omega(0)| + C_{\Lip}}{R} + R^{-1/2} C_{\gr} + M \int_{R^{1/2}}^{\infty} \frac{G^*(t)}{1+t^2}\, dt.
\end{align*}
The right hand side goes to zero as $R \to \infty$ so $\Err_R(\mb x + i \mb y)$ goes to zero uniformly in compact subsets. It follows that $E_R\omega \to E\omega$ uniformly on compact subsets. 
\end{proof}

Next we prove our earlier claim that the Dirichlet-to-Neumann operator of $\C \setminus \R$ is $\omega \to H[-\omega']$.
\begin{lemma}\label{lem:normal_derivative_harmonic_extension}
Let $\omega \in C^1_0(\R)$ and let $u = E\omega$ be the bounded harmonic extension of $\omega$ to the upper half plane $\H$. Then 
\begin{equation}
    \partial_y u(x+i0) = H[-\omega']. 
\end{equation}
\end{lemma}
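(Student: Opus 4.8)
The plan is to differentiate the Poisson integral (\ref{eq:poisson_ext_1D}) directly. Writing $u(x+iy) = \int_{-\infty}^{\infty}\omega(x+t)\,P_y(t)\,dt$ with $P_y(t) = \frac1\pi \frac{y}{y^2+t^2}$, I would first differentiate under the integral sign in $y$; this is legitimate for $y>0$ because $\omega$ is bounded with compact support and $\partial_y P_y$ is integrable in $t$ uniformly for $y$ near any $y_0>0$. A one-line computation gives $\partial_y P_y(t) = \frac1\pi\frac{t^2-y^2}{(y^2+t^2)^2} = -\frac1\pi\,\partial_t\!\left(\frac{t}{y^2+t^2}\right)$, i.e. the $y$-derivative of the Poisson kernel is minus the $t$-derivative of the conjugate Poisson kernel $Q_y(t) = \frac1\pi\frac{t}{y^2+t^2}$. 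Integrating by parts in $t$, the boundary terms vanish since $\omega\in C^1_0(\R)$, and one obtains $\partial_y u(x+iy) = \int_{-\infty}^{\infty}\omega'(x+t)\,Q_y(t)\,dt = \frac1\pi\int_{-\infty}^{\infty}\frac{t\,\omega'(x+t)}{y^2+t^2}\,dt$.

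It then remains to pass to the limit $y\to 0^+$ and identify $\lim_{y\to0^+}\frac1\pi\int_{-\infty}^{\infty}\frac{t\,\omega'(x+t)}{y^2+t^2}\,dt$ with $\mathrm{p.v.}\,\frac1\pi\int_{-\infty}^{\infty}\frac{\omega'(x+t)}{t}\,dt$, after which the substitution $t\mapsto -t$ turns this into $-H[\omega'](x) = H[-\omega'](x)$ by the definition (\ref{eq:defn_of_hilbert_transform}), finishing the proof. To carry out the limit honestly using only $\omega\in C^1_0$, I would subtract off the singularity: $\frac1\pi\int_{|t|\le1}\omega'(x)\,Q_y(t)\,dt = 0$ by oddness, so $\partial_y u(x+iy) = \frac1\pi\int_{-\infty}^{\infty}\frac{t\,[\omega'(x+t)-\omega'(x)\mathbf 1_{|t|\le1}]}{y^2+t^2}\,dt$, whose integrand is now dominated — near $t=0$ because $\omega'$ is continuous so $t[\omega'(x+t)-\omega'(x)] = o(t)$, and for large $|t|$ because $\omega'$ has compact support and $\frac{|t|}{y^2+t^2}\le |t|^{-1}$. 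Dominated convergence then gives the claimed limit, and simultaneously shows the principal value on the right-hand side exists.

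I expect the limit identification to be the only genuinely analytic point, and it is mild. A self-contained alternative I would keep in reserve is the harmonic-conjugate argument: let $F=u+iv$ be holomorphic on $\H$ with $v$ the conjugate Poisson extension of $\omega$, so that $v(x+i0) = H[\omega](x)$; the Cauchy–Riemann equations give $\partial_y u(x+i0) = -\partial_x v(x+i0) = -\partial_x H[\omega](x) = -H[\omega'](x)$, where the last step uses that $H$ commutes with $\partial_x$. Either route yields $\partial_y u(x+i0) = H[-\omega']$.
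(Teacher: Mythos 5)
Your main argument is correct and takes a genuinely different route from the paper. The paper's proof is exactly the ``harmonic conjugate'' argument you hold in reserve: it writes $u = \Re f$ for $f = u + iv$ holomorphic on $\H$, uses $v_y = H[u_y]$ together with the Cauchy--Riemann equation $\partial_y u = -\partial_x v$, and then lets $y \to 0^+$, using $u_\varepsilon \to \omega$ in $C^1$. Your primary route instead differentiates the Poisson integral \eqref{eq:poisson_ext_1D} directly, observes $\partial_y P_y(t) = -\tfrac1\pi \partial_t\bigl(\tfrac{t}{y^2+t^2}\bigr)$, integrates by parts (no boundary terms since $\omega \in C^1_0$), and identifies the resulting conjugate Poisson integral of $\omega'$ with $-H[\omega']$ in the limit $y\to 0^+$. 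All the algebra checks out, including the sign bookkeeping under the substitution $t \mapsto -t$ against the paper's definition \eqref{eq:defn_of_hilbert_transform}. The direct computation is more elementary in that it avoids invoking the complex-analytic characterization of the Hilbert transform at each height $y>0$, at the price of having to justify a passage to the limit at $y=0$ by hand; the paper's route sidesteps the explicit kernel manipulation but quietly uses the same sort of limit at the end.

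One small point of caution on your dominated-convergence step: you dominate the integrand near $t=0$ by $\tfrac{|\omega'(x+t)-\omega'(x)|}{\pi|t|}$, and you argue this is integrable because $\omega'$ is continuous. Mere continuity gives $\omega'(x+t)-\omega'(x) = o(1)$, not $o(t)$; to make $\tfrac{|\omega'(x+t)-\omega'(x)|}{|t|}$ integrable near $0$ you need a Dini-type modulus of continuity for $\omega'$ at $x$ (automatic if, say, $\omega \in C^2_0$ or $\omega'$ is H\"older). This is a harmless issue in context --- the Lemma is only ever invoked for $\omega \in C^2_0(\R^d)$ via Lemma~\ref{lem:props_Eomega}, and the paper's own proof has the same implicit requirement at its final limit step --- but if you want a proof valid literally for $\omega\in C^1_0$, you should replace the domination by a direct $\varepsilon$--$\delta$ argument: fix $\delta>0$ so that $|\omega'(x+t)-\omega'(x)|<\varepsilon$ for $|t|<\delta$, estimate the contribution from $|t|<\delta$ by $\varepsilon\cdot\tfrac1\pi\int_{|t|<\delta}\tfrac{|t|}{y^2+t^2}\,dt \lesssim \varepsilon\log(\delta/y)$ --- which does not stay bounded as $y\to 0^+$ --- and you will see the Dini condition is genuinely needed for the pointwise principal value to exist. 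So either assume $\omega\in C^2_0$ (matching the paper's actual usage) or interpret the conclusion in an $L^2$ or almost-everywhere sense.
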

\begin{proof}
Because $u$ is harmonic on the upper half plane $\H$ and $u(x+iy) \to 0$ as $y \to \infty$, we can write $u = \Re f$ where $f = u + iv$ is analytic on $\H$ and $f(x+iy) \to 0$ as $y \to \infty$. By the Cauchy-Riemann equations,
\begin{equation}
    \partial_y u = -\partial_x v. 
\end{equation}
For fixed $y > 0$, let $u_y(x) = u(x+iy)$ and $v_y(x) = v(x+iy)$ be functions on $\R$. By the complex analytic characterization of the Hilbert transform, 
\begin{equation}
    v_y = H[u_y] \quad \text{for all $y > 0$}.
\end{equation}
Thus
\begin{equation}
    \partial_y u (x+i\varepsilon) = -\partial_x v_{\varepsilon} = H[-u_{\varepsilon}'](x)\quad \text{for all $\varepsilon > 0$}.
\end{equation}
We have $u_{\varepsilon} \to u$ in $C^1$ as $\varepsilon \to 0$, so taking a limit gives the result. 
\end{proof}
Now we establish some basic properties of $\omega \to E\omega$. 
\begin{lemma}\label{lem:props_Eomega}
Suppose $\omega\in C^2_0(\R^d)$. Then 
\begin{enumerate}[leftmargin=*,labelindent=
\parindent,=itemsep=0pt,label=(\alph*), ref=(\alph*)]
    \item $E\omega$ is $C^2$ on $\C^d \setminus \R^d$. \label{lemitem:C2_prop}
    \item For $\mb x + i \mb y \in \C^d \setminus \R^d$, let $\ell_{\mb x, \mb y} = \{\mb x + t \mbhat y\, :\, t \in \R\}$. We have
    \begin{equation}\label{eq:continuity_using_hilbert_of_deriv}
        |E\omega(\mb x + i \mb y) - E\omega(\mb x)| \leq |\mb y|\, \| H[\omega|_{\ell_{\mb x, \mb y}}']\|_{\infty} . 
    \end{equation} \label{lemitem:hilbert_derivative_continuity}
    \item Let $\mb x + i \mb y \in \C^d \setminus \R^d$. The Hermitian form $\del \delbar E\omega(\mb x + i \mb y)$ has real coefficients. Let $\mb v \in \R^d$ be given by 
    \begin{equation}\label{eq:split_v_v1_y}
        \mb v = \mb v_1 + r \mbhat y,\quad \mb v_1 \perp \mbhat y. 
    \end{equation}
    Then 
    \begin{equation}
    \begin{split}
                \langle (\del \delbar E \omega(\mb x + i \mb y)) \mb v, \mb v\rangle &=\langle (\del \delbar E \omega(\mb x + i \mb y)) \mb v_1, \mb v_1\rangle \\ 
                &= \frac{|\mb v_1|^2}{4|\mb y|} \int_{-\infty}^{\infty} \langle (D^2 \omega(\mb x + t \mbhat y)) \mbhat v_1, \mbhat v_1\rangle\, \frac{dt}{\pi}. 
    \end{split}
    \label{eq:deldelbar_Eomega}
    \end{equation}
    \label{lemitem:deldelbar_Eomega}
\end{enumerate}
\end{lemma}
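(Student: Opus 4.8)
The plan is to verify the three claims of Lemma \ref{lem:props_Eomega} directly from the integral formula \eqref{eq:defn_of_extension_op}, treating each coordinate separately and then assembling. Throughout, fix a point $\mb x_0 + i \mb y_0 \in \C^d \setminus \R^d$ with $\mb y_0 \neq 0$, and work in a neighborhood where $\mb y$ stays bounded away from $0$.

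For part \ref{lemitem:C2_prop}, I would first observe that $E\omega(\mb x + i\mb y) = \int_\R \omega(\mb x + t\mb y)\frac{dt}{\pi(1+t^2)}$ makes sense for all $\mb z = \mb x + i\mb y$, and then argue we can differentiate under the integral sign up to second order. The point is that $\omega \in C^2_0(\R^d)$, so $\partial_{x_j}$ brings down a factor $\partial_j\omega(\mb x + t\mb y)$ and $\partial_{y_j}$ brings down $t\,\partial_j\omega(\mb x + t\mb y)$; the factor $t$ (and eventually $t^2$ from a second $\mb y$-derivative) is harmless against the $1/(1+t^2)$ decay combined with the compact support of $\omega$ (once $|\mb y| \geq \delta > 0$, only $|t| \lesssim_\delta 1$ contributes). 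So all derivatives up to order two of the integrand are dominated locally uniformly by an integrable function, giving $C^2$ regularity on $\C^d \setminus \R^d$ by dominated convergence / standard differentiation-under-the-integral lemmas. This step is routine.

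For part \ref{lemitem:hilbert_derivative_continuity}, the key is that along the line $\ell_{\mb x, \mb y}$, the function $s \mapsto E\omega(\mb x + is\mbhat y)$ is precisely the one-dimensional Poisson extension $E[\omega|_{\ell_{\mb x,\mb y}}]$ evaluated at height $s$: indeed, plugging $\mb y = s\mbhat y$ into \eqref{eq:defn_of_extension_op} and comparing with \eqref{eq:poisson_ext_1D} shows $E\omega(\mb x + is\mbhat y) = \int_\R \frac{\omega|_{\ell_{\mb x, \mb y}}(ts)}{1+t^2}\frac{dt}{\pi}$, the 1D Poisson extension of $\omega|_{\ell_{\mb x,\mb y}}$ to height $s = |\mb y|$. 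Then by the fundamental theorem of calculus in $s$ and Lemma \ref{lem:normal_derivative_harmonic_extension} (the normal derivative of the 1D harmonic extension at height $0$ is $H[-\omega|_\ell']$, and one should note the normal derivative at height $s > 0$ is the Poisson extension of $H[-\omega|_\ell']$ to height $s$, which has sup norm bounded by $\|H[\omega|_\ell']\|_\infty$ since Poisson extension is an averaging operator), we get $|E\omega(\mb x + i\mb y) - E\omega(\mb x)| \le |\mb y| \sup_{0 < s \le |\mb y|} |\partial_s E\omega(\mb x + is\mbhat y)| \le |\mb y|\,\|H[\omega|_{\ell_{\mb x,\mb y}}']\|_\infty$.

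For part \ref{lemitem:deldelbar_Eomega} — which I expect to be the main computational obstacle — I would proceed as follows. Real-linearity of $\del\delbar E\omega(\mb x + i\mb y)$ in the sense stated follows because $E\omega$ is harmonic along each complex line $\ell_{\mb x', \mb y'}$ and in particular the PDE \eqref{eq:pde_for_Eomega} kills the $\mbhat y$-direction: $\langle (\del\delbar E\omega)\mbhat y, \mbhat y\rangle = 0$, and by the Cauchy--Riemann-type structure the cross terms $\langle(\del\delbar E\omega)\mb v_1, \mbhat y\rangle$ vanish as well, so only the $\mb v_1 \perp \mbhat y$ component survives; this reduces the computation to $\langle(\del\delbar E\omega)\mbhat v_1, \mbhat v_1\rangle$ for a fixed unit $\mbhat v_1 \perp \mbhat y$. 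To evaluate this, restrict to the complex $2$-plane spanned by $\mbhat y$ and $\mbhat v_1$; write $\mb z = \mb x + s\mbhat y + (p + iq)\mbhat v_1$ with $s = |\mb y|$, and use the coordinate formula $\frac{\partial^2}{\del z \delbar z} = \frac14(\partial_p^2 + \partial_q^2)$ in the $\mbhat v_1$ variable (here the relevant complex variable is the coefficient of $\mbhat v_1$, and since $E\omega$ evaluated on this plane only sees $\mb x + s\mbhat y + p\mbhat v_1$ shifted — wait, more carefully: at the base point $q = 0$). The cleanest route: differentiate $E\omega(\mb x + t\mb y)$ formula twice. Set $F(\mb x, \mb y) = E\omega(\mb x + i \mb y)$. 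Using \eqref{eq:defn_of_extension_op}, compute $\partial_{p}^2 F$ and $\partial_q^2 F$ at the base point where we perturb $\mb x \to \mb x + p \mbhat v_1$ and $\mb y \to \mb y + q\mbhat v_1$ respectively. The $p$-derivative gives $\int \langle D^2\omega(\mb x + t\mb y)\mbhat v_1, \mbhat v_1\rangle \frac{dt}{\pi(1+t^2)}$, while the $q$-derivative (two factors of $t$ come down) gives $\int \langle D^2\omega(\mb x + t\mb y)\mbhat v_1,\mbhat v_1\rangle \frac{t^2\,dt}{\pi(1+t^2)}$, plus possibly a first-order term that integrates to zero by symmetry or an integration by parts. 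Adding them, the $t$-integral combines to $\int \langle D^2\omega(\mb x + t\mb y)\mbhat v_1, \mbhat v_1\rangle \frac{1+t^2}{1+t^2}\frac{dt}{\pi} = \int_\R \langle D^2\omega(\mb x + t\mb y)\mbhat v_1,\mbhat v_1\rangle\frac{dt}{\pi}$. Finally, the factor $\frac{1}{4|\mb y|}$ and the overall $|\mb v_1|^2$ scaling arise by rescaling $\mb y = |\mb y|\mbhat y$ so that $\mb x + t\mb y = \mb x + (t|\mb y|)\mbhat y$ and substituting $\tau = t|\mb y|$ in the final integral: the $dt/\pi$ becomes $d\tau/(\pi|\mb y|)$, together with the $\frac14$ from the complex Laplacian this gives the stated $\frac{|\mb v_1|^2}{4|\mb y|}\int \langle D^2\omega(\mb x + \tau\mbhat y)\mbhat v_1, \mbhat v_1\rangle\frac{d\tau}{\pi}$. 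I expect the bookkeeping of which $t$'s come down and checking the odd-order terms vanish to be the only delicate part; the differentiation under the integral is justified exactly as in part \ref{lemitem:C2_prop}.
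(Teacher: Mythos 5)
Parts \ref{lemitem:C2_prop} and \ref{lemitem:hilbert_derivative_continuity} of your argument are correct and essentially the paper's proof: differentiation under the integral, and the reduction to the one–dimensional Poisson extension along $\ell_{\mb x,\mb y}$ combined with Lemma \ref{lem:normal_derivative_harmonic_extension} and a bound on $\partial_s E\omega(\mb x+is\mbhat y)$ (the paper phrases the latter via the maximum principle for the harmonic function $\partial_y u$, you via the Poisson representation of the boundary values; both are fine). Likewise your computation of the diagonal entry $\langle(\del\delbar E\omega)\mbhat v_1,\mbhat v_1\rangle$ is exactly the paper's: $\tfrac14(\partial_p^2+\partial_q^2)$ applied to \eqref{eq:defn_of_extension_op} produces the factor $1+t^2$ that cancels the Poisson density, and rescaling $t\mapsto t/|\mb y|$ gives the $\tfrac{1}{4|\mb y|}$; there is in fact no first-order term in the $q$-derivative, so your hedge there is unnecessary.

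The gap is in the two assertions that precede this computation in part \ref{lemitem:deldelbar_Eomega}. First, real-linearity of the $(1,1)$ form does not follow from harmonicity of $E\omega$ along complex lines; it requires checking that $\Im\,\del_{z_j}\delbar_{z_k}E\omega=\tfrac14(\partial_{x_j}\partial_{y_k}-\partial_{x_k}\partial_{y_j})E\omega$ vanishes, which the paper does by noting the integrand becomes $t\,(\partial_j\partial_k\omega-\partial_k\partial_j\omega)=0$. Second, and more importantly, the reduction $\langle(\del\delbar E\omega)\mb v,\mb v\rangle=\langle(\del\delbar E\omega)\mb v_1,\mb v_1\rangle$ needs the cross terms $\langle(\del\delbar E\omega)\mb v_1,\mbhat y\rangle$ to vanish; the PDE \eqref{eq:pde_for_Eomega} only gives the diagonal entry $\langle(\del\delbar E\omega)\mbhat y,\mbhat y\rangle=0$, and ``by the Cauchy--Riemann-type structure'' is not an argument --- this is precisely the nontrivial point of the first equality in \eqref{eq:deldelbar_Eomega}. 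The paper handles it by computing the \emph{full} matrix, $\del\delbar E\omega(\mb x+i\mb y)=\tfrac14\int_{-\infty}^{\infty}D^2\omega(\mb x+t\mb y)\,\tfrac{dt}{\pi}$, and observing that this matrix annihilates $\mbhat y$, since $\int_{-\infty}^{\infty}D^2\omega(\mb x+t\mbhat y)\mbhat y\,dt=\int_{-\infty}^{\infty}\tfrac{d}{dt}\nabla\omega(\mb x+t\mbhat y)\,dt=0$ by compact support (stated in the paper as translation-invariance of the X-ray transform $X\omega$ along the line, so that $D_x^2(X\omega)\mbhat y=0$). Your own $\partial_p^2+\partial_q^2$ computation, carried out for all pairs of directions rather than only $(\mbhat v_1,\mbhat v_1)$, yields exactly this matrix identity, so the repair is short --- but as written the reduction to $\mb v_1$, and with it the statement being proved, is asserted rather than established.
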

\begin{proof}[Proof of \ref{lemitem:C2_prop}]
Differentiate under the integral sign in \eqref{eq:defn_of_extension_op}. 
\end{proof}

\begin{proof}[Proof of \ref{lemitem:hilbert_derivative_continuity}]

First we show \eqref{eq:continuity_using_hilbert_of_deriv} in $d = 1$. Let $\omega \in C^2_0(\R)$, and let $u = E \omega $ be the harmonic extension to $\H$. We have
\begin{equation*}
    u(x+iy) \leq u(x) + y \sup_{z \in \H} \partial_y u(z). 
\end{equation*}
The function $\partial_y u$ is harmonic on $\H$, so by the maximum principle
\begin{equation*}
    \sup_{z \in \C} \partial_y u(z) = \sup_{x \in \R} \partial_y u(x+i0). 
\end{equation*}
By Lemma \ref{lem:normal_derivative_harmonic_extension} we have $\sup_{x \in \R} \partial_y u(x+i0) \leq \| H[\omega'] \|_{\infty}$. Thus $u(x+iy) \leq u(x) + y \| H[\omega'] \|_{\infty}$. The same argument shows $u(x+iy) \geq u(x) - y \| H[\omega'] \|_{\infty}$.

Now let $\mb x + i \mbhat y \in \C^d \setminus \R^d$, $\mbhat y$ a unit vector. Let 
\begin{equation*}
    u(z) = E\omega(\mb x + z \mbhat y ),\quad z \in \C. 
\end{equation*}
Then $u(z)$ harmonically extends $\omega|_{\ell}(t) = \omega(\mb x + t \mbhat y)$, so 
\begin{equation*}
    |E\omega(\mb x + i r \mbhat y) - E\omega(\mb x)| =  |u(ir) - u(0)| \leq r\, \| H[\omega|_{\ell}'] \|_{\infty}.
\end{equation*}
\end{proof}

\begin{proof}[Proof of \ref{lemitem:deldelbar_Eomega}]
First of all, $\del \delbar E\omega$ has real coefficients because
\begin{align*}
    \Im \del_{z_j} \delbar_{z_k} E\omega &= \frac{1}{4}(\partial_{x_j} \partial_{y_k} - \partial_{x_k} \partial_{y_j}) E\omega \\ 
    &= \frac{1}{4}\int \frac{t (\partial_j \partial_k\omega)(\mb x + t \mb y) - t (\partial_k \partial_j\omega)(\mb x + t \mb y)}{1+t^2}\, \frac{dt}{\pi} = 0.
\end{align*}
It follows that $\del \delbar E\omega = \frac{1}{4}(D^2_x + D^2_y) E\omega$. 
For any $\mb x + i \mb y \in \C^d \setminus \R^d$ we have
\begin{align*}
    \del \delbar E\omega(\mb x + i \mb y) &= \frac{1}{4} (D_x^2+D_y^2) \int_{-\infty}^{\infty} \frac{\omega(\mb x + t \mb y)}{1+t^2}\, \frac{dt}{\pi} \\ 
    &= \frac{1}{4} \int_{-\infty}^{\infty} D^2\omega(\mb x + t \mb y)\, \frac{dt}{\pi}. 
\end{align*}
It is nice in this computation that the differentiation on $\mb x$ and $\mb y$ combine to give a $1+t^2$ factor, cancelling the $\frac{1}{1+t^2}$ factor in the Poisson measure. Notice $\del \delbar E\omega(\mb x, \mb y) = \frac{1}{|\mb y|} \del \delbar E\omega(\mb x, \mbhat y)$ by change of variables. Also notice that if $\mb v_1 \perp \mbhat y$ then 
\begin{equation*}
    \langle (\del \delbar E\omega(\mb x + i \mb y))\mb v_1, \mb v_1\rangle = \frac{|\mb v_1|^2}{4|\mb y|} \int_{-\infty}^{\infty} \langle (D^2\omega(\mb x + t \mbhat y))\mbhat v_1, \mbhat v_1\rangle\, \frac{dt}{\pi}
\end{equation*}
and \eqref{eq:deldelbar_Eomega} holds in this special case. To prove \eqref{eq:deldelbar_Eomega} in general, we will show that if $\mb v = \mb v_1 + r \mb y$ as in \eqref{eq:split_v_v1_y} then
\begin{equation}
    \langle (\del \delbar E\omega(\mb x + i \mb y))\mb v, \mb v\rangle = \langle (\del \delbar E\omega(\mb x + i \mb y))\mb v_1, \mb v_1\rangle. \label{eq:deldelbar_dep_v1}
\end{equation}
Define the X-ray transform by 
\begin{align*}
    X(f)(\mb x, \mbhat y) = \int_{-\infty}^{\infty} f(\mb x + t \mbhat y)\, dt,\quad (\mb x, \mbhat y) \in \R^d \times \mbS^{d-1}.
\end{align*}
We have
\begin{align*}
    \del \delbar E\omega(\mb x + i \mbhat y) = \frac{1}{4}X(D^2 \omega)(\mb x, \mbhat y) = \frac{1}{4} D_x^2 (X\omega)(\mb x, \mbhat y).
\end{align*}
The X-ray transform is constant along lines, meaning $X\omega(\mb x + a \mbhat y, \mbhat y) = X\omega(\mb x, \mbhat y)$. It follows from this property that
\begin{align*}
    D_x^2(X \omega)(\mb x, \mbhat y)\mbhat y = 0
\end{align*}
where $D_x^2 (X\omega)$ is viewed as a linear map. Equation \eqref{eq:deldelbar_dep_v1} follows.
\end{proof}

The following lemma isn't used in the proof of Proposition \ref{prop:Ew_is_psh}, but will be used in the application of Proposition \ref{prop:Ew_is_psh} to Proposition \ref{prop:BM_step1_prop}. Note that this Lemma is not necessary for the application to fractal uncertainty because the weights we construct to prove Theorem \ref{thm:FUP_higher_dim} are compactly supported. 

\begin{lemma}\label{lem:Ew_conv_cpct_subsets}
Let $\omega_j \in C(\R^d)$, $j\geq 1$ be a sequence converging to $\omega \in C(\R^d)$ uniformly on compact subsets. Suppose $\{\omega_j\}$ is uniformly Lipschitz,
\begin{equation}
    |\omega_j(\mb x_1) - \omega_j(\mb x_2)| \leq C_{\Lip}|\mb x_1 - \mb x_2|\quad \text{for all $j\geq 1$, all $\mb x_1, \mb x_2 \in \R^d$},
\end{equation}
and satisfies the uniform growth condition
\begin{equation}
\begin{split}
        G^*(r) &= \sup_{j\geq 1} \sup_{|\mb y| = r} \int_{1/2}^2 |\omega_j(s \mb y)|\, ds, \\ 
    \int_0^{\infty} \frac{G^*(r)}{1+r^2}\, dr &< \infty.
\end{split}
\label{eq:uniform_growth_cond_conv_cpct}
\end{equation}
Then $E\omega_j \to E\omega$ uniformly on compact subsets. 
\end{lemma}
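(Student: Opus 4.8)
\textbf{Proof proposal for Lemma \ref{lem:Ew_conv_cpct_subsets}.}

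The plan is to show that the pointwise integrals defining $E\omega_j$ converge to the integral defining $E\omega$, uniformly for $\mb x + i\mb y$ in a fixed compact set $K \subset \C^d$, by splitting the $t$-integral into a bounded part $|t| \leq T$ and a tail $|t| > T$, and showing the tail is uniformly small while the bounded part converges by uniform convergence on compact sets. The key observation is that the formula (\ref{eq:defn_of_extension_op}) already exhibits $E\omega$ as an honest absolutely convergent integral (not merely a boundary value), so convergence is a matter of dominated-convergence-style bookkeeping with the $t$-dependence of the Poisson weight $\tfrac{1}{\pi(1+t^2)}$ doing most of the work.

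First I would reduce to a uniform tail estimate. Fix a compact set $K$; by translating and rescaling it suffices to bound, for $\mb x + i\mb y \in K$,
\begin{equation*}
    \left| \int_{|t| > T} \frac{\omega_j(\mb x + t\mb y) - \omega(\mb x + t\mb y)}{1+t^2}\, \frac{dt}{\pi} \right|.
\end{equation*}
Since $|\omega_j| , |\omega| \leq C_{\gr}'\langle \cdot \rangle$ (the case $a = 0$ of the Lipschitz/growth hypotheses — indeed $|\omega_j(\mb x + t\mb y)| \lesssim_K 1 + |t|$ on $K$), the integrand is $\lesssim_K \tfrac{1+|t|}{1+t^2}$, whose tail over $|t| > T$ is $\lesssim_K T^{-1}\log T$ uniformly in $j$. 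Wait — more care is needed: $\tfrac{1+|t|}{1+t^2}$ is not integrable near infinity in a way that decays, its tail integral is $\sim \log$, so a bare linear-growth bound gives only boundedness, not smallness of the tail. This is the point where the growth condition (\ref{eq:uniform_growth_cond_conv_cpct}) must be used properly: one rewrites the tail as an integral over dyadic annuli in $t$, and on the annulus $|t| \sim 2^k$ the relevant quantity $\int_{1/2}^2 |\omega_j(s \cdot t\mb y)|\, ds$ (after a change of variables matching the definition of $G^*$) is controlled by $G^*(c_K 2^k)$, so the tail is bounded by $\sum_{2^k > T} 2^{-k} G^*(c_K 2^k) \lesssim_K \int_{T'}^\infty \tfrac{G^*(r)}{1+r^2}\, dr$, which tends to $0$ as $T \to \infty$ by the convergence of the integral in (\ref{eq:uniform_growth_cond_conv_cpct}), uniformly in $j$. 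The same bound applies with $\omega$ in place of $\omega_j$ (pass $G^*$ to the limit, or note $\omega$ inherits the same growth bound pointwise). This handles the tail uniformly in both $j$ and in $\mb x+i\mb y \in K$.

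For the bounded part $|t| \leq T$, the set $\{\mb x + t\mb y : \mb x + i\mb y \in K,\ |t| \leq T\}$ is a compact subset of $\R^d$, so $\omega_j \to \omega$ uniformly there; hence $\int_{|t|\leq T} \tfrac{|\omega_j - \omega|}{1+t^2}\tfrac{dt}{\pi} \leq \|\omega_j - \omega\|_{L^\infty(\text{that compact set})} \to 0$. Combining: given $\varepsilon > 0$, choose $T$ so the tail is $< \varepsilon/2$ for all $j$, then choose $j$ large so the bounded part is $< \varepsilon/2$; this gives $\|E\omega_j - E\omega\|_{L^\infty(K)} < \varepsilon$. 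The main obstacle is the one flagged above — a crude linear growth bound is \emph{not} enough to make the $t$-tail small, and the lemma genuinely needs hypothesis (\ref{eq:uniform_growth_cond_conv_cpct}) to be invoked via a dyadic decomposition that relates the tail of the $t$-integral to the convergent integral $\int_0^\infty \tfrac{G^*(r)}{1+r^2}\,dr$; getting the change of variables between "$t$ large, $\mb y$ fixed in $K$" and "the radial variable $r = |t\mb y|$ appearing in $G^*$" lined up correctly (including the harmless factors depending on $K$) is the one spot requiring attention, everything else being routine.
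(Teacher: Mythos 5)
Your overall strategy matches the paper's proof: split the $t$-integral at a threshold $T$, handle $|t|\le T$ by uniform convergence of $\omega_j\to\omega$ on the compact set $\{\mb x+t\mb y\,:\,\mb x+i\mb y\in K,\ |t|\le T\}$, and make the tail small uniformly in $j$ via the growth condition; you also correctly identify that a bare linear-growth bound cannot make the tail small, so (\ref{eq:uniform_growth_cond_conv_cpct}) must be invoked.

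There is, however, one genuine gap in the tail step as written. The function $G^*$ only controls averages of $\omega_j$ along rays through the origin, whereas your integrand is $\omega_j(\mb x+t\mb y)$, and for generic $\mb x$ the points $\mb x+t\mb y$ do not lie on a line through $0$. Your dyadic estimate silently replaces $\omega_j(\mb x+t\mb y)$ by $\omega_j(s\,t\mb y)$, and the factor $c_K$ cannot absorb this: the discrepancy is an additive translation by $\mb x$, not a rescaling of the radial variable $r=|t\mb y|$. This is exactly where the uniform Lipschitz hypothesis must be used (and it is how the paper's tail bound uses it): write $|\omega_j(\mb x+t\mb y)|\le|\omega_j(t\mb y)|+C_{\Lip}|\mb x|\le|\omega_j(t\mb y)|+C_{\Lip}M$ for $|\mb x|\le M$, so the translation error contributes at most $2MC_{\Lip}\int_{|t|\ge T}\frac{dt}{\pi(1+t^2)}\lesssim MC_{\Lip}/T$, which is small uniformly in $j$; only after this reduction may one compare $\int_{|t|\ge T}\frac{|\omega_j(t\mb y)|}{1+t^2}\,dt$ (and the same integral with $\omega$, which inherits the $G^*$ bound by pointwise convergence) with the tail of $\int_0^{\infty}\frac{G^*(r)}{1+r^2}\,dr$, as you propose. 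In your write-up the Lipschitz hypothesis appears only in the discarded linear-growth remark, so this step is missing rather than merely implicit. With that one line added, your argument coincides with the paper's; the remaining bookkeeping you flag (lining up the dyadic scale of $t$ with $r=|t\mb y|$ uniformly over $\mb y$ in $K$, including small $|\mb y|$) is treated at the same level of detail in the paper.
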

\begin{proof}
Let 
\begin{equation*}
    G_1^*(r) =  \sup_{|\mb y| = r} \int_{1/2}^2 |\omega(s \mb y)|\, ds
\end{equation*}
be the growth function of $\omega$. 
Because this integral is over a compact region, $G_1^*(r) \leq G^*(r)$. Let 
\begin{align*}
    E_R \omega_j(\mb x + i \mb y) = \int_{|t|\leq R} \frac{\omega_j(\mb x + t \mb y)|}{1+t^2}\, \frac{dt}{\pi}
\end{align*}
and similarly for $\omega$. Let $\varepsilon > 0$ and $M > 0$ be given. By Lemma \ref{lem:ext_lip_func_with_growth}, if $R \geq R_0(\varepsilon, M)$ then for all $|\mb x|, |\mb y| \leq M$ we have
\begin{align*}
    |E_R\omega_j(\mb x + i \mb y) - E\omega_j(\mb x + i \mb y)| &\leq \varepsilon, \\ 
    |E_R\omega(\mb x + i \mb y) - E\omega(\mb x + i \mb y)| &\leq \varepsilon.
\end{align*}
If $j\geq j_0(\varepsilon)$ then for all $|\mb x|, |\mb y| \leq M$ we have. 
\begin{align*}
    |E_R\omega_j(\mb x + i \mb y) - E_R\omega(\mb x + i \mb y)| \leq \varepsilon. 
\end{align*}
Combining these we see $|E\omega_j(\mb x + i \mb y) - E\omega(\mb x + i \mb y)| \leq \varepsilon$ in the same region.
\end{proof}

\subsection{Proof of Proposition \ref{prop:Ew_is_psh}}\label{subsec:pf_prop_Ew_psh}
Let $U \subset \C^d$ be an open set. In \S\ref{sec:BM_problem_intro} we defined a function $u: U \to \R$ to be plurisubharmonic if it is upper semicontinuous and every restriction to a complex line is subharmonic, meaning the Laplacian is non-negative in the distributional sense. An equivalent condition is that $u$ is upper semicontinuous and satisfies the sub-mean value property
\begin{equation}\label{eq:psh_rad_dep_z}
    u(\mb z) \leq \fint_0^{2\pi} u(\mb z + e^{i\theta}\mb v)\, d\theta \quad \text{ for all $|\mb v| < r_0(\mb z)$}
\end{equation}
where $r_0(\mb z) > 0$ may depend arbitrarily on $\mb z$. For a proof see~\cite{HormanderVol1}*{Theorem 4.1.11}.  

The upshot is that the proof of Proposition \ref{prop:Ew_is_psh} can be split into two parts. Let $C > \max(C_1, C_2)$. We show $u = E\omega + C|\mb y|$
is plurisubharmonic, and it follows from continuity that we can take $C = \max(C_1, C_2)$ as well.
First we prove $u$ is plurisubharmonic on $\C^d \setminus \R^d$ using our computation of $\del \delbar E\omega$. Then we prove \eqref{eq:psh_rad_dep_z} holds for all $\mb z \in \R^d$ using our estimates on $E\omega$ near $\R^d$ \eqref{eq:continuity_using_hilbert_of_deriv}. It is in this step that we use $C > \max(C_1, C_2)$ rather than $C \geq \max(C_1, C_2)$.

Before proving plurisubharmonicity we show \eqref{eq:u_bounds_both_dir_Ew_psh}. By \eqref{eq:continuity_using_hilbert_of_deriv} we have
\begin{align*}
    \omega(\mb x) - C_1 |\mb y| \leq E\omega(\mb x + i \mb y) \leq \omega(\mb x) + C_1 |\mb y|
\end{align*}
and because $C \geq C_1$, we have
\begin{equation*}
    \omega(\mb x) \leq E\omega(\mb x + i \mb y) + C|\mb y| \leq \omega(\mb x) + 2C|\mb y|
\end{equation*}
as desired.

\subsubsection{Plurisubharmonicity on $\C^d \setminus \R^d$}
We start with a Lemma. 
\begin{lemma}\label{lem:C2_then_psh_deldelbar}
Let $U \subset \C^d$ be an open set. If $v \in C^2(U)$, then $v$ is pluri\-subharmonic on $U$ if and only if $\del \delbar v(\mb z)$ is positive semidefinite for every $\mb z \in U$. 
\end{lemma}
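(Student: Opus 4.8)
The plan is to reduce the statement to the one–variable fact that a $C^2$ function on an open subset of $\C$ is subharmonic exactly when its Laplacian is nonnegative, and to transport this across complex lines via a chain–rule identity.

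\textbf{Step 1: reduce to complex lines.} By the local characterization of plurisubharmonicity recalled just before the lemma (around \eqref{eq:psh_rad_dep_z}), a continuous $v$ is plurisubharmonic on $U$ if and only if for every $\mb z_0 \in U$ and every $\mb w \in \C^d$ the function $g_{\mb z_0,\mb w}(\zeta) = v(\mb z_0 + \zeta \mb w)$ is subharmonic on the open set $\Omega_{\mb z_0,\mb w} = \{\zeta \in \C : \mb z_0 + \zeta\mb w \in U\}$. (Only small disks are needed in \eqref{eq:psh_rad_dep_z}, which is exactly what matters since $U$ need not be all of $\C^d$.) So it suffices to (a) recall that for $g \in C^2(\Omega)$, $\Omega \subset \C$ open, $g$ is subharmonic iff $\Delta g \geq 0$ on $\Omega$, and (b) compute $\Delta g_{\mb z_0,\mb w}$ in terms of $\del\delbar v$.

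\textbf{Step 2: the chain–rule identity.} Writing $\zeta = s + it$ and using $\del_\zeta = \tfrac12(\del_s - i\del_t)$, $\delbar_\zeta = \tfrac12(\del_s + i\del_t)$, the chain rule applied to $g_{\mb z_0,\mb w}(\zeta) = v(\mb z_0 + \zeta\mb w)$ gives
\[
  \del_\zeta\delbar_\zeta g_{\mb z_0,\mb w}(\zeta) = \sum_{j,k} w_j \bar w_k \, \frac{\del^2 v}{\del z_j\,\delbar z_k}(\mb z_0 + \zeta\mb w) = \langle (\del\delbar v)(\mb z_0 + \zeta\mb w)\,\mb w, \mb w\rangle
\]
in the notation of the paper. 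Since $\Delta = 4\,\del_\zeta\delbar_\zeta$ on $\C$, this reads $\Delta g_{\mb z_0,\mb w}(\zeta) = 4\,\langle (\del\delbar v)(\mb z_0 + \zeta\mb w)\,\mb w,\mb w\rangle$. Combining with Step 1: $\del\delbar v(\mb z)$ is positive semidefinite for every $\mb z \in U$ iff $\Delta g_{\mb z_0,\mb w} \geq 0$ along every complex line iff every such $g_{\mb z_0,\mb w}$ is subharmonic iff $v$ is plurisubharmonic.

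\textbf{Step 3: the one–variable equivalence.} This is the only step with any content, and it is standard; I would either cite it (e.g. \cite{HormanderVol1}, \cite{FritzJohn}) or give the two–line argument via circular means. For $g \in C^2(\Omega)$ and $\zeta_0 \in \Omega$, the mean $M(r) = \fint_0^{2\pi} g(\zeta_0 + re^{i\theta})\,d\theta$ satisfies $r M'(r) = \tfrac{1}{2\pi}\int_{|\zeta - \zeta_0| \le r}\Delta g\, dA$ by the divergence theorem; hence if $\Delta g \geq 0$ then $M$ is nondecreasing and $M(r) \geq M(0) = g(\zeta_0)$ (the sub–mean value inequality, so $g$ is subharmonic), while if $\Delta g(\zeta_0) < 0$ then $M'(r) < 0$ for small $r > 0$ and the sub–mean value inequality fails. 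I do not expect any genuine obstacle: the lemma is classical, and the substantive point to isolate is just the identity $\Delta_\zeta[v(\mb z_0 + \zeta\mb w)] = 4\langle(\del\delbar v)\mb w,\mb w\rangle$ that bridges the several–variable and one–variable pictures.
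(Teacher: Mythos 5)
Your proof is correct. The paper does not prove this lemma itself but simply cites \cite{HormanderConvexity}*{Corollary 4.1.5}, and your argument --- reduction to complex lines, the chain-rule identity $\Delta_\zeta\, v(\mb z_0+\zeta\mb w)=4\langle(\del\delbar v)(\mb z_0+\zeta\mb w)\,\mb w,\mb w\rangle$, and the classical one-variable criterion that a $C^2$ function is subharmonic iff its Laplacian is nonnegative --- is precisely the standard proof of that cited result, so it matches the intended route.
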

See \cite{HormanderConvexity}*{Corollary 4.1.5} for a proof.
\medskip

By Lemma \ref{lem:props_Eomega}\ref{lemitem:C2_prop}, $u \in C^2(\C^d \setminus \R^d)$, so it suffices to show $\del \delbar u(\mb x + i \mb y)$ is positive semidefinite for all $\mb x + i \mb y \in \C^d \setminus \R^d$ in order to establish \eqref{eq:psh_rad_dep_z} on $\C^d \setminus \R^d$. 
For $\mb y \neq 0$ we have 
\begin{equation}\label{eq:deldelbar_norm_y}
    \del \delbar |\mb y| = \frac{1}{4|\mb y|}(I - \hat {\mb y} \hat {\mb y}^t) = \frac{1}{4|\mb y|} \pi^{\perp}_{\mb y}
\end{equation}
as a Hermitian form. That is, $\del \delbar |\mb y|$ orthogonally projects away from $\mb y$ and then scales by $\frac{1}{4|\mb y|}$. If $\mb v = \mb v_1 + r \mb y$ with $\mb v_1 \perp \mb y$, then
\begin{equation}\label{eq:deldelbar_norm_y_quadr_form}
    \langle (\del \delbar |\mb y|)\mb v, \mb v\rangle = \frac{|\mb v_1|^2}{4|\mb y|}.
\end{equation}
Because $\del\delbar E\omega$ is real-linear, the goal is to show that for any $\mb v \in \R^d$, 
\begin{equation}\label{eq:psh_away_from_Rd}
    \langle \del \delbar (E\omega + C|\mb y|)\mb v, \mb v\rangle \geq 0. 
\end{equation}
Write $\mb v = \mb v_1 + r\mb y$, $\mb v_1 \perp \mb y$. Combining \eqref{eq:deldelbar_Eomega} and \eqref{eq:deldelbar_norm_y_quadr_form}, we have 
\begin{align*}
    \langle (\del \delbar E\omega + C|\mb y|)\mb v, \mb v\rangle &= \frac{|\mb v_1|^2}{4|\mb y|}\int_{-\infty}^{\infty} \langle (D^2 \omega(\mb x + t\mbhat y))\mbhat v_1, \mbhat v_1\rangle\, \frac{dt}{\pi} + C\frac{|\mb v_1|^2}{4|\mb y|} \\ 
    &= \frac{|\mb v_1|^2}{4|\mb y|}\left(C + \int_{-\infty}^{\infty} \langle (D^2 \omega(\mb x + t\mbhat y))\mbhat v_1, \mbhat v_1\rangle\, \frac{dt}{\pi}\right)
\end{align*}
so if $C \geq C_2$ then \eqref{eq:psh_away_from_Rd} holds.

\subsubsection{Plurisubharmonicity on $\R^d$}

This part is analogous to the 1D argument. Let $C \geq C_1 + \varepsilon$. 
Let $\mb x \in \R^d$, $\mb v \in \C^d \setminus \{0\}$.
By \eqref{eq:continuity_using_hilbert_of_deriv}, 
\begin{equation}
    u(\mb x + e^{i\theta}\mb v) \geq \omega(\mb x + \Re (e^{i\theta}\mb v)) + \varepsilon |\Im (e^{i\theta} \mb v)|.
\end{equation}
Because $\omega \in C_0^2(\R^d)$ there is a constant $\lambda > 0$ so that
\begin{equation*}
    |\omega(\mb x + \mb h) - \omega(\mb x) - \nabla \omega(\mb x) \cdot \mb h| \leq \lambda |\mb h|^2
\end{equation*}
for all $\mb x, \mb h \in \R^d$. Integrating, we find
\begin{equation*}
    \fint_0^{2\pi} \omega(\mb x + \Re (e^{i\theta} \mb v))\, d\theta \geq \omega(\mb x) - \lambda |\mb v|^2
\end{equation*}
for all $\mb x \in \R^d$, $\mb v \in \C^d$. On the other hand, 
\begin{align*}
    \fint_0^{2\pi} |\Im (e^{i\theta} \mb v)|\, d\theta 
    &\geq |\mb v|\fint_0^{2\pi} |\Im (e^{i\theta}\mbhat v)|^2\, d\theta \\
    &= \frac{|\mb v|}{2} \fint_0^{2\pi} (|\Im (e^{i\theta}\mbhat v)|^2+|\Re (e^{i\theta}\mbhat v)|^2)\, d\theta = \frac{|\mb v|}{2},
\end{align*}
so
\begin{align*}
    \fint_0^{2\pi} u(\mb x + e^{i\theta}\mb v)\, d\theta &\geq \fint_0^{2\pi} \omega(\mb x + \Re(e^{i\theta}\mb v))\, d\theta  + \varepsilon \fint_0^{2\pi} |\Im(e^{i\theta}\mb v)|\, d\theta \\
    &\geq \omega(\mb x) - \lambda |\mb v|^2 + \frac{\varepsilon}{2} |\mb v|. 
\end{align*}
If $|\mb v| \leq \frac{\varepsilon}{2\lambda}$ then the sub-mean value property holds.

\section{Modifying weight functions}\label{sec:modifying_weight_funcs}

In this section we prove Proposition \ref{prop:BM_step1_prop}.
Suppose $\omega: \R^d \to \R^d_{\leq 0}$ satisfies the hypotheses of Theorem \ref{thm:higher_dim_BM}. It would be nice if $\omega$ also satisfied the hypotheses of Proposition \ref{prop:Ew_is_psh}, because then we could complete the \ref{BM_higher_dim_step:1} problem. In general condition \ref{item_Ew_psh:hilbert} will be satisfied, but condition \ref{item_Ew_psh:second_deriv} on the integral of the second derivative over lines \eqref{eq:second_deriv_int_over_line} will not. Using regularity of $D^2 \omega$ \eqref{eq:regularity_condition_0to3} we can get a decent estimate for \eqref{eq:second_deriv_int_over_line} on each dyadic scale by putting absolute values inside the integral, but these contributions will not be summable. 
To fix this issue we modify the weight $\omega$ to a new weight $\widetilde \omega \leq \omega$ which has a lot of cancellation in \eqref{eq:second_deriv_int_over_line}. In our estimates we will not put absolute values inside the integral.

An important observation is that when we zoom out far enough all lines look like they pass through the origin. To be a bit more precise, using the regularity hypothesis \eqref{eq:regularity_condition_0to3} it suffices to estimate \eqref{eq:second_deriv_int_over_line} for lines through the origin. For a function $f: \R^d \to \R$, let  
\begin{equation*}
    \pi_{\mbS^{d-1}}f(\mbhat v) = \int_0^{\infty} f(t\mbhat v)\, t^{-2} dt
\end{equation*}
be a weighted spherical projection of $f$. The factor $t^{-2}$ allows us to compare the translational derivative to the rotational derivative.

\begin{lemma}\label{lem:second_deriv_spherical_prj}
Suppose $f: \R^d \to \R$ is a $C^2$ function compactly supported and supported away from the origin. Let $\mbhat y \perp \mbhat v$. Then 
\begin{equation}
    \int_0^{\infty} \langle (D^2f(t\mbhat y))\mbhat v, \mbhat v\rangle\, dt = \pi_{\mbS^{d-1}} f(\mbhat y) + \frac{d^2}{d\theta^2}\Big|_{\theta = 0} \pi_{\mbS^{d-1}} f(\mbhat y \cos \theta + \mbhat v \sin \theta  ).
\end{equation}
\end{lemma}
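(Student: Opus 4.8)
The plan is to reduce the identity to a one-variable computation along the great circle through $\mbhat y$ in the direction $\mbhat v$. Since $\mbhat y \perp \mbhat v$ are orthonormal, the curve $\gamma(\theta) = \mbhat y \cos\theta + \mbhat v \sin\theta$ is a unit-speed geodesic on $S^{d-1}$, with $\gamma(0) = \mbhat y$, $\gamma'(0) = \mbhat v$, and $\gamma''(\theta) = -\gamma(\theta)$, so in particular $\gamma''(0) = -\mbhat y$. Set
\begin{equation*}
    g(\theta) = \pi_{S^{d-1}}f(\gamma(\theta)) = \int_0^\infty f(t\gamma(\theta))\, t^{-2}\, dt .
\end{equation*}
Then the right-hand side of the claimed identity is exactly $\pi_{S^{d-1}}f(\mbhat y) + g''(0)$, so it suffices to compute $g''(0)$.

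I would then differentiate twice under the integral sign, which is legitimate because $f$ is $C^2$ and supported away from the origin: for $\theta$ near $0$ the integrand $f(t\gamma(\theta))\,t^{-2}$ is, in the $t$ variable, supported in a fixed compact subset of $(0,\infty)$ on which everything is smooth. The chain and product rules give
\begin{equation*}
    g'(\theta) = \int_0^\infty \langle \nabla f(t\gamma(\theta)), \gamma'(\theta)\rangle\, t^{-1}\, dt ,
\end{equation*}
and differentiating once more,
\begin{equation*}
    g''(\theta) = \int_0^\infty \langle (D^2 f(t\gamma(\theta)))\gamma'(\theta), \gamma'(\theta)\rangle\, dt + \int_0^\infty \langle \nabla f(t\gamma(\theta)), \gamma''(\theta)\rangle\, t^{-1}\, dt .
\end{equation*}
Here one sees the point of the weight $t^{-2}$ in the definition of $\pi_{S^{d-1}}$: differentiating $\nabla f$ a second time produces a factor of $t$ that cancels into the clean measure $dt$ in the first term, while the $\gamma''$-term retains a single factor $t^{-1}$. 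Evaluating at $\theta = 0$ with $\gamma'(0) = \mbhat v$ and $\gamma''(0) = -\mbhat y$ yields
\begin{equation*}
    g''(0) = \int_0^\infty \langle (D^2 f(t\mbhat y))\mbhat v, \mbhat v\rangle\, dt - \int_0^\infty \langle \nabla f(t\mbhat y), \mbhat y\rangle\, t^{-1}\, dt .
\end{equation*}

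Finally I would rewrite the last integral: since $\langle \nabla f(t\mbhat y), \mbhat y\rangle = \tfrac{d}{dt} f(t\mbhat y)$, integration by parts gives
\begin{equation*}
    \int_0^\infty \frac{d}{dt}\big[f(t\mbhat y)\big]\, t^{-1}\, dt = \Big[f(t\mbhat y)\, t^{-1}\Big]_0^\infty + \int_0^\infty f(t\mbhat y)\, t^{-2}\, dt = \pi_{S^{d-1}} f(\mbhat y),
\end{equation*}
the boundary term vanishing at $t=0$ because $f$ is supported away from the origin and at $t=\infty$ by the support (and decay) of $f$. Substituting this back produces
\begin{equation*}
    \int_0^\infty \langle (D^2 f(t\mbhat y))\mbhat v, \mbhat v\rangle\, dt = \pi_{S^{d-1}} f(\mbhat y) + g''(0),
\end{equation*}
which is the assertion. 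I do not anticipate a genuine obstacle: this is a short computation, and the only step requiring a moment of thought is recognizing that the $t^{-2}$ weight defining $\pi_{S^{d-1}}$ is precisely the one for which the geodesic-curvature term $\gamma''(0) = -\mbhat y$ contributes exactly $-\pi_{S^{d-1}}f(\mbhat y)$ after one integration by parts; the hypothesis $\mbhat y \perp \mbhat v$ is used only to guarantee that $\gamma$ is a unit-speed great circle on $S^{d-1}$, so that $g(\theta)$ is literally $\pi_{S^{d-1}}f$ restricted to that circle.
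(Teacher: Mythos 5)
Your argument is correct and is essentially the paper's proof: both compute the second $\theta$-derivative of $f(t(\mbhat y\cos\theta+\mbhat v\sin\theta))$ (equivalently, differentiate the weighted projection under the integral sign), pick up the term $-t\,\partial_{\mbhat y}f(t\mbhat y)$ from $\gamma''(0)=-\mbhat y$, and convert $\int_0^\infty t^{-1}\tfrac{d}{dt}f(t\mbhat y)\,dt$ into $\pi_{S^{d-1}}f(\mbhat y)$ by the same integration by parts. No substantive difference.
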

\begin{proof}
We have 
\begin{align*}
    \frac{d^2}{d\theta^2}\Big|_{\theta = 0} f(t\mbhat y \cos \theta + t\mbhat v \sin \theta) &= t^2 \langle (D^2f(t\mbhat y))\mbhat v, \mbhat v\rangle - t (\partial_{\mbhat y}f)(t\mbhat y).
\end{align*}
Integrating,
\begin{align*}
    \frac{d^2}{d\theta^2}\Big|_{\theta=0} \pi_{\mbS^{d-1}} f(\mbhat y \cos \theta + \mbhat v \sin \theta) &= \int_0^{\infty} \frac{d^2}{d\theta^2}\Big|_{\theta=0} f(t\mbhat y \cos \theta + t\mbhat v \sin \theta)\, t^{-2}dt \\ 
    &= \int_0^{\infty} \langle (D^2f(t\mbhat y))\mbhat v, \mbhat v\rangle\, dt - \int_0^{\infty} t^{-1}\frac{d}{dt}f(t\mbhat y)\, dt \\ 
    &= \int_0^{\infty}\langle (D^2f(t\mbhat y))\mbhat v, \mbhat v\rangle\, dt  - \int_0^{\infty} f(t\mbhat y)\, t^{-2}dt  
\end{align*}
using integration by parts in the last step.
\end{proof}

We write the modified weight as a sum of dyadic pieces, $\widetilde \omega = \sum_k \widetilde \omega_k$. The idea is to design each piece $\widetilde \omega_k$ so that $\pi_{\mbS^{d-1}} \widetilde \omega_k \equiv q_k = \text{const}$. Lemma \ref{lem:second_deriv_spherical_prj} then gives 
\begin{equation}
    \int_0^{\infty} \langle (D^2\widetilde \omega_k(t\mbhat y))\mbhat v, \mbhat v\rangle\, dt = q_k,
\end{equation}
and as long as $\sum_k |q_k| < \infty$ we obtain a favorable estimate. 

We implement this plan with the following two Lemmas. The first Lemma modifies the weight $\omega \to \widetilde \omega$.
\begin{lemma}\label{lem:modify_weight_lemma}
Suppose that $\omega: \R^d \to \R_{\leq 0}$ satisfies (\ref{eq:higher_dim_BM_weight_zero_small_x},\ref{eq:regularity_condition_0to3},\ref{eq:higher_dim_BM_growth_cond}), the conditions of Theorem \ref{thm:higher_dim_BM}, with constants $C_{\reg}$ and $C_{\gr}$. Then there exists a weight $\widetilde \omega = \sum_{k\geq 0} \widetilde \omega_k$ such that
\begin{enumerate}[leftmargin=*,labelindent=\parindent,=itemsep=0pt,label=(\roman*), ref=(\roman*)]    
    \item $\widetilde\omega (\mb x)  \leq \omega(\mb x)$ for all $\mb x\in \R^d$,  \label{lemitem:tilde_omega_less_omega}
    \item $\widetilde \omega(\mb x) = 0$ for $|\mb x| \leq 2$, \label{lemitem:tilde_omega_0_x_small}
    \item $\supp \widetilde \omega_0 \subset \{\mb x\, :\, |\mb x| \leq 5\}$ and $\supp \widetilde \omega_k \subset \{\mb x\, :\, 2^{k-1} \leq |\mb x| \leq 2^{k+2}\}$ for $k\geq 1$, \label{lemitem:supp_cond_tilde_omega}
    \item We have
    \begin{equation}
        \|D^a \widetilde \omega_k\|_{\infty} \leq C_{\reg}' 2^{(1-a)k}\quad \text{for $0 \leq a \leq 3$},\label{eq:omega_reg_pieces}
    \end{equation} \label{lemitem:tilde_omega_regularity}
    \item For $k \geq 5$, $\pi_{\mbS^{d-1}} \widetilde \omega_k = q_k$ is constant over the unit sphere and 
    \begin{equation}
        \sum_{k} |q_k| \leq C_{\gr}'. \label{eq:sum_qk_omega_tilde}
    \end{equation} \label{lemitem:tilde_omega_spherical_proj}
\end{enumerate}
We may take $C_{\reg}' \leq C_d C_{\reg}$ and $C_{\gr}' \leq C_dC_{\gr}$.
\end{lemma}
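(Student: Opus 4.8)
The plan is to decompose $\omega$ into dyadic shells and then correct each far-out shell so that its weighted spherical projection $\pi_{S^{d-1}}$ becomes constant over the sphere. Fix a standard radial smooth dyadic partition of unity $1=\sum_{k\ge0}\chi_k$, with $\chi_0$ supported in $\{|\mb x|\le5\}$, $\chi_k$ supported in $\{2^{k-1}\le|\mb x|\le2^{k+1}\}$ for $k\ge1$, and $|D^a\chi_k|\lesssim 2^{-ak}$. Set $\omega_k=\chi_k\omega$, so $\omega=\sum_k\omega_k$ and $\omega_k\le0$; since $\langle\mb x\rangle\sim2^k$ on $\supp\chi_k$ for $k\ge1$, the Leibniz rule together with (\ref{eq:regularity_condition_0to3}) gives $\|D^a\omega_k\|_\infty\lesssim C_{\reg}2^{(1-a)k}$ for $0\le a\le3$. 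For $k<5$ I take $\widetilde\omega_k=\omega_k$. For $k\ge5$, let $p_k(\mbhat v)=\pi_{S^{d-1}}\omega_k(\mbhat v)=\int_0^\infty\chi_k(t\mbhat v)\,\omega(t\mbhat v)\,t^{-2}\,dt\le0$, set $q_k=\inf_{|\mbhat v|=1}p_k(\mbhat v)$ and $\epsilon_k=p_k-q_k\ge0$, fix once and for all a bump $\phi\in C_c^\infty((1,2))$ with $\phi\ge0$ and $\int_0^\infty\phi(s)s^{-2}\,ds=1$, and define
\[
g_k(\mb x)=2^k\,\epsilon_k(\mb x/|\mb x|)\,\phi(|\mb x|/2^k),\qquad \widetilde\omega_k=\omega_k-g_k,\qquad \widetilde\omega=\textstyle\sum_k\widetilde\omega_k .
\]
A change of variables gives $\pi_{S^{d-1}}g_k=\epsilon_k$ (this is where the normalization of $\phi$ enters), hence $\pi_{S^{d-1}}\widetilde\omega_k=q_k$ is constant on the sphere. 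Since each $g_k\ge0$ is supported in $\{2^k\le|\mb x|\le2^{k+1}\}$, we get $\widetilde\omega=\omega-\sum_{k\ge5}g_k\le\omega$, $\widetilde\omega=0$ on $\{|\mb x|\le2\}$, and the support condition; so properties (i)--(iii) and the first half of (v) hold.

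For the regularity (iv) it remains to bound $g_k$. As $q_k$ is constant, the tangential derivatives of $\epsilon_k$ equal those of $p_k$; and since $\chi_k$ is radial, a tangential derivative of $p_k$ only hits the factor $\omega(t\mbhat v)$, producing terms $t^c D^c\omega(t\mbhat v)$ with $c\le 3$, which obey $t^c|D^c\omega(t\mbhat v)|\lesssim C_{\reg}\,t$ on $\supp\chi_k$ by (\ref{eq:regularity_condition_0to3}); integrating $C_{\reg}t\cdot t^{-2}$ over $t\sim2^k$ shows that the tangential derivatives of $\epsilon_k$ up to order $3$ are $\lesssim C_{\reg}$. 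Expanding $D^a g_k$ in polar coordinates produces, up to bounded combinatorial constants, $2^k\cdot2^{-ak}$ times products of radial derivatives of $\phi$ and tangential derivatives of $\epsilon_k$ of total order $\le a$, evaluated where $|\mb x|\sim2^k$; hence $\|D^a g_k\|_\infty\lesssim C_{\reg}2^{(1-a)k}$ and $\|D^a\widetilde\omega_k\|_\infty\lesssim C_{\reg}2^{(1-a)k}$ for $0\le a\le3$. This is the only place the third-derivative hypothesis is used, and it gives $C_{\reg}'\lesssim C_dC_{\reg}$.

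It remains to prove the summability $\sum_k|q_k|\lesssim C_{\gr}$, which is the heart of the matter. Since $-\omega\ge0$ and $0\le\chi_k\le1$ is supported where $2^{k-1}\le|\mb x|\le2^{k+1}$ (so $t^{-2}\le4\cdot2^{-2k}$ there),
\[
|q_k|=\sup_{|\mbhat v|=1}\pi_{S^{d-1}}(-\omega_k)(\mbhat v)\le 4\cdot2^{-2k}\sup_{|\mbhat v|=1}\int_{2^{k-1}}^{2^{k+1}}|\omega(t\mbhat v)|\,dt .
\]
Writing $M_j=\sup_{|\mbhat v|=1}\int_{2^j}^{2^{j+1}}|\omega(t\mbhat v)|\,dt$, this gives $|q_k|\lesssim 2^{-2k}(M_{k-1}+M_k)$, so $\sum_k|q_k|\lesssim\sum_j2^{-2j}M_j$. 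Now I use the elementary inclusion $[2^j,2^{j+1}]\subseteq[r/2,2r]$, valid for every $r\in[2^j,2^{j+1}]$: substituting $u=sr$ in the definition of $G^*$ gives $r\,G^*(r)=\sup_{|\mbhat w|=1}\int_{r/2}^{2r}|\omega(u\mbhat w)|\,du\ge M_j$, so $G^*(r)\ge M_j/r$ on all of $[2^j,2^{j+1}]$, whence $\int_{2^j}^{2^{j+1}}\frac{G^*(r)}{1+r^2}\,dr\gtrsim 2^{-2j}M_j$. Summing over $j$ and using the growth condition (\ref{eq:higher_dim_BM_growth_cond}), $\sum_k|q_k|\lesssim\sum_j2^{-2j}M_j\lesssim\int_0^\infty\frac{G^*(r)}{1+r^2}\,dr\le C_{\gr}$, which completes (v) with $C_{\gr}'\lesssim C_dC_{\gr}$.

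I expect this last step to be the main obstacle, as it is where the hypotheses must interlock: one has to pass from the ray-by-ray dyadic masses $M_j$ to the single one-dimensional integral $\int_0^\infty G^*(r)(1+r^2)^{-1}\,dr$, which forces the dyadic and radial structure into the normalization of $g_k$ and, earlier, into the decision not to put absolute values inside line integrals. By contrast, the choice of partition of unity, the identity $\pi_{S^{d-1}}g_k=\epsilon_k$, and the polar-coordinate bookkeeping for the derivative bounds are routine once the shape of the correction $g_k$ has been guessed.
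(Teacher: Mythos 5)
Your proposal is correct and takes essentially the same approach as the paper: a dyadic shell decomposition, a nonpositive correction on each shell $k\ge 5$ whose weighted spherical projection cancels the angular variation of $\pi_{S^{d-1}}\omega_k$ and leaves the constant $q_k=\inf_{\mbhat v}\pi_{S^{d-1}}\omega_k(\mbhat v)$, Leibniz/chain-rule bookkeeping for the derivative bounds, and summation of $\sum_k|q_k|$ via the growth condition (\ref{eq:higher_dim_BM_growth_cond}). The only cosmetic differences are the radial profile of the correction (the paper reuses $\psi_k$ normalized by $p_k=\pi_{S^{d-1}}\psi_k$, you use a fixed bump $\phi$ with $\int_0^\infty\phi(s)s^{-2}\,ds=1$) and the final summation (the paper passes through $G^*(2^k)$ and a dyadic averaging bound, while you lower-bound $\int_{2^j}^{2^{j+1}}G^*(r)(1+r^2)^{-1}\,dr$ directly by $2^{-2j}M_j$), both of which are equivalent.
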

The condition \eqref{eq:omega_reg_pieces} is just another way of writing the regularity condition \eqref{eq:regularity_condition_0to3}, and \eqref{eq:sum_qk_omega_tilde} is another way of writing the growth condition \eqref{eq:higher_dim_BM_growth_cond}.

The second lemma analyzes the modified weight $\widetilde \omega$ and shows it is admissible for Proposition \ref{prop:Ew_is_psh}.

\begin{lemma}\label{lem:modified_weight_good_lemma}
Suppose that $\widetilde \omega = \sum_{k\geq 0} \widetilde \omega_k$ satisfies the conditions Lemma \ref{lem:modify_weight_lemma}\ref{lemitem:tilde_omega_0_x_small}-\ref{lemitem:tilde_omega_spherical_proj}. 
Let $C = C_d\max(C_{\reg}', C_{\gr}')$.
Then $u = E\widetilde \omega + C|\mb y|$ is continuous and plurisubharmonic on $\C^d$ and satisfies 
\begin{equation}\label{eq:u_bdd_above_below_modified_weight}
    u(\mb x) \leq u(\mb x + i \mb y) \leq u(\mb x) + 2 C |\mb y|. 
\end{equation}
\end{lemma}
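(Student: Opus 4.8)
The plan is to apply Proposition~\ref{prop:Ew_is_psh}, not to $\widetilde\omega$ itself (which is an infinite sum and not compactly supported), but to the partial sums $\omega^{(N)} = \sum_{k=0}^{N}\widetilde\omega_k$, and then to pass to the limit $N\to\infty$ using Lemma~\ref{lem:Ew_conv_cpct_subsets}. Each $\omega^{(N)}$ is $C^2$ and compactly supported, and by the bounded overlap of supports in \ref{lemitem:supp_cond_tilde_omega} together with the dyadic derivative bounds \eqref{eq:omega_reg_pieces} in \ref{lemitem:tilde_omega_regularity}, the partial sums satisfy the Kohn--Nirenberg bounds $|D^a\omega^{(N)}(\mb x)| \lesssim C_{\reg}'\langle \mb x\rangle^{1-a}$ for $0\le a\le 3$, with constant independent of $N$. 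I would then verify that $\omega^{(N)}$ satisfies hypotheses \ref{item_Ew_psh:hilbert} and \ref{item_Ew_psh:second_deriv} of Proposition~\ref{prop:Ew_is_psh} with constants $C_1,C_2 \lesssim \max(C_{\reg}',C_{\gr}')$ uniform in $N$; note that the hypothesis $\omega\le 0$ in that proposition is not actually used in establishing plurisubharmonicity or the two-sided bound \eqref{eq:u_bounds_both_dir_Ew_psh}, so it is harmless that $\omega^{(N)}$ need not be nonpositive.

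Condition \ref{item_Ew_psh:hilbert}, the uniform bound $\|H[(\omega^{(N)}|_\ell)']\|_\infty \le C_1$, should follow from the uniform Kohn--Nirenberg regularity by a standard pointwise estimate: integrating by parts, $H[g'](x) = \int_{\R}\frac{g(x)-g(x-t)}{\pi t^2}\,dt$ for $g = \omega^{(N)}|_\ell$, and one bounds the contribution of bounded $|t|$ by a second-order Taylor expansion together with the decay $|g''|\lesssim C_{\reg}'\langle x\rangle^{-1}$, and the contribution of large $|t|$ using $|g|\lesssim C_{\reg}'\langle\cdot\rangle$ and the vanishing of $\widetilde\omega$ near the origin. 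Alternatively, as Dyatlov observes in \S\ref{subsec:hilbert_from_second_deriv}, in dimension $d\ge 2$ condition \ref{item_Ew_psh:hilbert} follows from condition \ref{item_Ew_psh:second_deriv}, so little is lost by treating this step lightly.

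The heart of the matter is condition \ref{item_Ew_psh:second_deriv}: I must bound $\int_{-\infty}^{\infty}\langle (D^2\omega^{(N)}(\mb x+t\mbhat y))\mbhat v,\mbhat v\rangle\,\frac{dt}{\pi}$ from below, uniformly in $N$, for $\mbhat y\perp\mbhat v$. Translating $\mb x$ along $\mbhat y$, assume $\mb x\perp\mbhat y$, so $b=|\mb x|$ is the impact parameter. Write the integral as $\sum_k\Phi_k$ with $\Phi_k = \int_{-\infty}^{\infty}\langle (D^2\widetilde\omega_k(\mb x+t\mbhat y))\mbhat v,\mbhat v\rangle\,\frac{dt}{\pi}$ and split the scales into three regimes. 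If $2^k\ll b$ the line misses $\supp\widetilde\omega_k$ and $\Phi_k=0$. There are only $O(1)$ scales $k$ with $2^k\sim b$ (or with $k$ below a fixed threshold), and for each the crude bound $|\Phi_k|\lesssim \|D^2\widetilde\omega_k\|_\infty\cdot 2^k \lesssim C_{\reg}'$ holds since the line meets $\supp\widetilde\omega_k$ in length $\lesssim 2^k$. The main case is $2^k\gg b$: here I rescale $\widetilde\omega_k$ to unit scale, so the line passes within distance $b/2^k\ll 1$ of the origin, and Taylor-expand the rescaled X-ray transform about the origin. The zeroth-order term is $\int_{-\infty}^{\infty}\langle (D^2\widetilde\omega_k(t\mbhat y))\mbhat v,\mbhat v\rangle\,\frac{dt}{\pi}$, which by Lemma~\ref{lem:second_deriv_spherical_prj} (applied on $[0,\infty)$ and $(-\infty,0]$) equals $2q_k$ for $k\ge 5$, because $\pi_{S^{d-1}}\widetilde\omega_k\equiv q_k$ is constant by \ref{lemitem:tilde_omega_spherical_proj}, so its angular second derivatives vanish; the first-order correction is $\lesssim C_{\reg}'\,b/2^k$ by the third-derivative bound in \eqref{eq:omega_reg_pieces}. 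Summing, $\sum_{2^k\gg b}\Phi_k = \sum_k 2q_k + O\bigl(C_{\reg}'\,b\sum_{2^k\gg b}2^{-k}\bigr)$, which by \eqref{eq:sum_qk_omega_tilde} and the geometric sum is $\ge -2C_{\gr}' - C_dC_{\reg}'$. Combining the three regimes gives condition \ref{item_Ew_psh:second_deriv} with $C_2 \lesssim \max(C_{\reg}',C_{\gr}')$. This interplay of dyadic scales for a general line --- isolating the ``line through the origin'' main term $2q_k$ and summing the impact-parameter error geometrically, rather than naively putting absolute values inside the integral --- is the step I expect to demand the most care.

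Finally, with $C = C_d\max(C_{\reg}',C_{\gr}')$, Proposition~\ref{prop:Ew_is_psh} gives that each $u_N = E\omega^{(N)} + C|\mb y|$ is continuous and plurisubharmonic on $\C^d$ with $u_N(\mb x)\le u_N(\mb x+i\mb y)\le u_N(\mb x)+2C|\mb y|$, uniformly in $N$. To pass to the limit, observe that $\omega^{(N)}\to\widetilde\omega$ locally uniformly (indeed $\omega^{(N)}$ coincides with $\widetilde\omega$ on any fixed ball once $N$ is large, by \ref{lemitem:supp_cond_tilde_omega}), that $\{\omega^{(N)}\}$ is uniformly Lipschitz (the $a=1$ Kohn--Nirenberg bound gives $|D\omega^{(N)}|\lesssim C_{\reg}'$), and that $\{\omega^{(N)}\}$ satisfies the uniform growth hypothesis \eqref{eq:uniform_growth_cond_conv_cpct} of Lemma~\ref{lem:Ew_conv_cpct_subsets}; for the last point one estimates $G^*$ dyadically, using that for $r\sim 2^k$ one has $\int_{1/2}^{2}|\widetilde\omega_k(sr\mbhat y)|\,ds \lesssim 2^k|q_k|$ (plus a bounded contribution from the $O(1)$ small-$k$ pieces), so $\int_0^{\infty}\frac{G^*(r)}{1+r^2}\,dr \lesssim \sum_k|q_k| + C_{\reg}' < \infty$. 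Lemma~\ref{lem:Ew_conv_cpct_subsets} then yields $E\omega^{(N)}\to E\widetilde\omega$ locally uniformly, and plurisubharmonicity, continuity, and the two-sided bound all pass to the limit $u = E\widetilde\omega + C|\mb y|$, establishing \eqref{eq:u_bdd_above_below_modified_weight}.
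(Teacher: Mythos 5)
Your architecture is the paper's: apply Proposition \ref{prop:Ew_is_psh} to the compactly supported partial sums, verify its two hypotheses with constants uniform in $N$ of size $\lesssim C_{\reg}'+C_{\gr}'$, and pass to the limit with Lemma \ref{lem:Ew_conv_cpct_subsets}. Your handling of condition \ref{item_Ew_psh:second_deriv} --- zero contribution for $2^k\ll b$, the crude bound $2^k\|D^2\widetilde\omega_k\|_\infty\lesssim C_{\reg}'$ at the $O(1)$ scales near $b$, and for $2^k\gg b$ translating to the parallel line through the origin with error $C_{\reg}'\,b\,2^{-k}$ from the third-derivative bound and identifying the main term as $2q_k$ via Lemma \ref{lem:second_deriv_spherical_prj} and the constancy of $\pi_{S^{d-1}}\widetilde\omega_k$ --- is exactly the paper's argument and is correct, as is the limiting step (uniform Lipschitz from (\ref{eq:omega_reg_pieces}), uniform growth from (\ref{eq:sum_qk_omega_tilde})).

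The gap is your treatment of condition \ref{item_Ew_psh:hilbert}. The primary claim, that the uniform bound on $\|H[(\omega^{(N)}|_\ell)']\|_\infty$ follows from the Kohn--Nirenberg regularity alone, is false. Regularity only gives $|g(x)-g(x\pm t)|\lesssim C_{\reg}'|t|$, so in the representation $H[g'](x)=\frac1\pi\int_0^\infty\bigl(2g(x)-g(x-t)-g(x+t)\bigr)t^{-2}\,dt$ the tail contributes a logarithm of the support radius, i.e.\ a bound growing linearly in $N$ for the partial sums. Concretely, a weight equal to $-|\mb x|$ outside $\B_4$ and zero on $\B_2$ satisfies (\ref{eq:regularity_condition_0to3}) (hence (\ref{eq:omega_reg_pieces})) with $C_{\reg}'=O(1)$, yet for its truncation at radius $2^N$ the restriction to a line through the origin has $H[g'](0)\sim \frac{2\log 2}{\pi}\,N$. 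So this step genuinely needs the growth input: in the paper's corresponding estimate the scales $k$ far above $|\mb x|$ contribute $C_{\reg}'2^{-k}|\mb x|+|q_k|$, and it is (\ref{eq:sum_qk_omega_tilde}) that makes the sum finite. Your fallback --- Dyatlov's observation in \S\ref{subsec:hilbert_from_second_deriv} that \ref{item_Ew_psh:hilbert} follows from \ref{item_Ew_psh:second_deriv} --- can close the gap, but only with two caveats you currently gloss over: that lemma needs a \emph{two-sided} bound on the line integrals of $\langle D^2\omega\,\mbhat v,\mbhat v\rangle$ (condition \ref{item_Ew_psh:second_deriv} as stated is only a lower bound; fortunately your regime analysis, because it uses $\sum_k|q_k|\le C_{\gr}'$, does produce the absolute-value bound, so say so explicitly), and it requires $d\ge2$, so it does not cover $d=1$. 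Either make that precise, or prove \ref{item_Ew_psh:hilbert} directly by the same dyadic splitting you used for \ref{item_Ew_psh:second_deriv}: integrate by parts at the scales away from $|\mb x|$ and invoke $|q_k|$ at the large scales, which is what the paper does.
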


Combining these two lemmas proves Proposition \ref{prop:BM_step1_prop} and completes the \ref{BM_higher_dim_step:1} problem. Note that the Lipschitz condition \eqref{eq:u_Lip_step1prop} in the conclusion of Proposition \ref{prop:BM_step1_prop} follows from \eqref{eq:omega_reg_pieces} and the fact that $u = \widetilde \omega$ on $\R^d$. 

\subsection{Proof of Lemma \ref{lem:modify_weight_lemma}: Modifying the weight}

Let $\omega$ satisfy the conditions of Theorem \ref{thm:higher_dim_BM} with constants $C_{\reg}, C_{\gr}$. Let
\begin{equation}
    1 = \sum_{k=0}^{\infty} \psi_k
\end{equation}
be a partition of unity of $\R_{\geq 0}$ where $\supp \psi_k \subset A_k$, 
\begin{alignat}{2}
    A_0 &= [0,5], \\ 
    A_k &= [2^{k-1}, 2^{k+2}]&\qquad \text{ for $k\geq 1$}. 
\end{alignat}
We may choose $\psi_k(\mb x) = \psi_1(2^{1-k}\mb x)$ giving a derivative estimate $|D^a \psi_k(\mb x)| \leq C_a 2^{-ak}$ for all $a \geq 0$. Write $\psi_k(\mb x) = \psi_k(|\mb x|)$ for $\mb x \in \R^d$. 
Let 
\begin{equation}
    \pi_{\mbS^{d-1}}\psi_k(\mbhat v) = p_k,\quad p_k \sim 2^{-k}\text{ up to universal constants}.
\end{equation}
Write 
\begin{equation}
    \omega = \sum_{k\geq 0} \omega_k,\quad \omega_k(\mb x) = \psi_k(\mb x) \omega(\mb x). 
\end{equation}
For $k \geq 1$, let 
\begin{equation}
    q_k = \inf_{\mbhat v \in \mbS^{d-1}} \pi_{\mbS^{d-1}}\omega_k(\mbhat v).
\end{equation}
Recall that $\omega \leq 0$, so $|q_k| = \sup_{\mbhat v \in \mbS^{d-1}} \pi_{\mbS^{d-1}}|\omega_k(\mbhat v)|$. 
Now set
\begin{equation}\label{eq:defn_of_gk}
    g_k(\mb x) = p_k^{-1} \psi_k(\mb x)\, (q_k - (\pi_{\mbS^{d-1}} \omega_k)(\mbhat x)),\quad k\geq 1.  
\end{equation}
Notice that by the definition of $q_k$, we have $g_k\leq 0$.
We define 
\begin{align}
    \widetilde \omega_k &= \begin{cases}
        \omega_k & 0 \leq k < 5,  \\ 
        \omega_k + g_k & k \geq 5,
    \end{cases} \\ 
    \widetilde \omega &= \sum_{k \geq 0} \widetilde \omega_k.
\end{align}
Certainly $\widetilde \omega \leq \omega$ because $g_k \leq 0$ for all $k$. Also, because we only add the modification $g_k$ for $k\geq 5$, we have $\widetilde \omega(\mb x) = \omega(\mb x) = 0$ for $|\mb x| \leq 2$. 
By construction,
\begin{equation}\label{eq:proj_of_gk_plus_omegak_const}
    \pi_{\mbS^{d-1}}\widetilde \omega_k = q_k\quad \text{for $k\geq 5$}.
\end{equation}
We have 
\begin{align*}
    |q_k| &= \sup_{\mbhat v \in \mbS^{d-1}} \int_0^{\infty}|\omega_k(t\mbhat v)|\, t^{-2}dt \lesssim  2^{-2k} \sup_{\mbhat v \in \mbS^{d-1}} \int_{2^{k-1}}^{2^{k+2}} |\omega(t\mbhat v)|\, dt \\ 
    &\lesssim 2^{-k}(G^*(2^k) + G^*(2^{k+1})).
\end{align*}
Choose $\mb x \in \R^d$ with $|\mb x| = r$ so that $G^*(r) = G(\mb x)$. We have 
\begin{align*}
    G^*(r) = \int_{1/2}^2 |\omega(s\mb x)|\, ds  \lesssim \int_{1/2}^2 \int_{1/2}^2 |\omega(st\, \mb x)|\, ds dt \lesssim \int_{1/2}^2 G^*(tr)\, dt
\end{align*}
leading to the pointwise bound 
\begin{equation*}
    G^*(2^j) \lesssim 2^{-j}\int_{2^{j-1}}^{2^{j+1}} G^*(r)\, dr
\end{equation*}
which gives
\begin{equation*}
    \sum_k |q_k| \lesssim \sum_k 2^{-k} G(2^k) \lesssim \int_0^{\infty} \frac{G^*(r)}{1+r^2}\, dr
\end{equation*}
as needed. 

Finally, we must show that $\widetilde \omega$ satisfies the regularity condition \eqref{eq:omega_reg_pieces}. 
Let $0\leq a \leq 3$. By the Leibniz rule,
\begin{align*}
    \| D^a \omega_k \|_{\infty} &\lesssim \sum_{0\leq b \leq a} \| D^{a-b}\psi_k\|_{\infty} \sup_{|\mb x| \in A_k} |D^b\omega(\mb x)|\\
    &\lesssim C_{\reg}\sum_{0\leq b\leq a} 2^{-(a-b)k}2^{(1-b)k} \lesssim C_{\reg} 2^{(1-a)k}.
\end{align*}
Let $h_k(\mb x) = \pi_{\mbS^{d-1}}\omega_k(\mbhat x)$. We have 
\begin{align*}
    h_k(\mb x) &= \int_{2^{k-1}}^{2^{k+2}} \omega_k(t\mbhat x)\, t^{-2}dt \\ 
    &= |\mb x|^{-1} \int_{0}^{\infty} \omega_k(s\mb x)\, s^{-2}ds,\\
    g_k(\mb x) &= p_k^{-1} \psi_k(\mb x) (q_k - h_k(\mb x))
\end{align*}
Thus
\begin{align*}
    \|D^a g_k \|_{\infty} &\lesssim 2^k \sum_{0\leq b \leq a} \| D^{a-b} \psi_k\|_{\infty}\, \sup_{|\mb x| \in A_k } |D^b (q_k-h_k)(\mb x)| \\
    &\lesssim \sum_{0\leq b \leq a} 2^{-(a-b)k+k}\sup_{|\mb x| \in A_k } |D^b h_k(\mb x)|. 
\end{align*}
Let $|\mb x| \in A_k$, $0\leq b \leq 3$. We have
\begin{align*}
    |D^b h_k(\mb x)| &\lesssim \sum_{0\leq c \leq b} |D^{b-c} |\mb x|^{-1}|\, \int_{1/10}^{10}  |D^c \omega_k(s\mb x)|\, s^{c-2}\, ds \\ 
    &\lesssim C_{\reg}\sum_{0\leq c \leq b} 2^{-(1+(b-c))k} 2^{(1-c)k} \lesssim C_{\reg}2^{-bk}.
\end{align*}
Combining these estimates we obtain that for $0 \leq a \leq 3$,
\begin{align*}
    \| D^a g_k \|_{\infty} &\lesssim C_{\reg}2^{(1-a)k}\\
    \| D^a \widetilde \omega_k \|_{\infty} &\lesssim C_{\reg}2^{(1-a)k}
\end{align*}
as needed.

\subsection{Proof of Lemma \ref{lem:modified_weight_good_lemma}: Analyzing the modified weight}

We would like to apply Proposition \ref{prop:Ew_is_psh} to $\widetilde \omega$. Let $\widetilde \omega = \sum_{k\geq 0} \widetilde \omega_k$ satisfy the conditions Lemma~\ref{lem:modify_weight_lemma}\ref{lemitem:tilde_omega_0_x_small}-\ref{lemitem:tilde_omega_spherical_proj}. 
First we prove an estimate on the Hilbert transform of the derivative of $\widetilde \omega$ restricted to lines.
\begin{lemma}
Let $\ell = \{\mb x + t \mbhat y\, :\, t \in \R\}$ be a line. Let $\widetilde \omega_k|_{\ell}(t) = \widetilde \omega_k(\mb x + t \mbhat y)$ be the restriction of $\widetilde \omega_k$ to this line. For all such lines, we have
\begin{equation}
    \sum_{k\geq 0} |H[\widetilde \omega_k|_{\ell}'](0)| \lesssim C_{\reg}'+C_{\gr}' \label{eq:estimate_H_tilde_omega_k_sum}.
\end{equation}
\end{lemma}
\begin{proof}
Let $r = 0$ if $|\mb x| \leq 4$, and otherwise let $r\geq 1$ be such that $|\mb x| \in [2^{r-1}, 2^r)$. 

For any $k, r$ we have the following estimate, although we only use it when $r-5 \leq k \leq r+5$:
\begin{align*}
    | H[\widetilde \omega_k|_{\ell}'](0)| &= \left|\int_{0}^{\infty} \frac{\partial_{\mbhat y}\widetilde \omega_k(\mb x + t \mbhat y) - \partial_{\mbhat y}\widetilde \omega_k(\mb x - t \mbhat y)}{t}\, \frac{dt}{\pi}\right| \\ 
    &\lesssim 2^k \| D^2 \widetilde \omega_k \|_{\infty} \leq C'_{\reg}.
\end{align*}
For $k < r-5$, $\widetilde \omega_k$ is supported away from $\mb x$, and we have
\begin{align*}
    | H[\widetilde \omega_k|_{\ell}'](0)| &= \left|\int_{-\infty}^{\infty} \frac{1}{t}\, \frac{d}{dt}\widetilde \omega_k(\mb x+ t\mbhat y)\, \frac{dt}{\pi}\right| \\ 
    &=  \left|\int_{-\infty}^{\infty} \frac{\widetilde \omega_k(\mb x+ t\mbhat y)}{t^2}\, \frac{dt}{\pi}\right| && \text{by integration by parts,}\\ 
    &\lesssim 2^{-2r} 2^k\| \widetilde \omega_k \|_{\infty} \leq C_{\reg}'2^{2(k-r)}.
\end{align*}
Finally, for $k > r+5$, $\widetilde \omega_k$ is once again supported away from $\mb x$, and integrating by parts we have
\begin{align*}
    | H[\widetilde \omega_k|_{\ell}'](0)| &= \left|\int_{-\infty}^{\infty} \frac{\widetilde \omega_k(\mb x+ t\mbhat y)}{t^2}\, \frac{dt}{\pi}\right| \\ 
    &\lesssim 2^{-2 k} \int_{-\infty}^{\infty}|\widetilde \omega_k(\mb x + t \mbhat y)|\, dt \\ 
    &\lesssim C_{\reg}'2^{-k} |\mb x| + 2^{-2k} \int_{-\infty}^{\infty}|\widetilde \omega_k(t \mbhat y)|\, dt && \text{by Lipschitz regularity} \\ 
    &\lesssim C_{\reg}'2^{-k} |\mb x| + |q_k|.
\end{align*}
Summing these contributions, 
\begin{align*}
    \sum_{k\geq 0} | H[\widetilde \omega_k|_{\ell}'](0)| &\lesssim C_{\reg}' + C_{\reg}'\sum_{k < r-5} 2^{2(k-r)} + C_{\reg}'\sum_{k > r+5} 2^{r-k} + \sum_{k\geq 5} |q_k| \\
    &\lesssim C_{\reg}' + C_{\gr}'.
\end{align*}
\end{proof}

Now we prove an estimate on the integral of the second derivative of $\widetilde \omega$ over lines.
\begin{lemma}
Let $\ell = \{\mb x_0 + t \mbhat y\}$ be a line, where $\mb x_0$ is the closest point to the origin.  We have
\begin{equation}
    \sum_{k\geq 0} \left|\int_{-\infty}^{\infty} \langle (D^2 \widetilde \omega_k(\mb x_0 + t \mbhat y))\mbhat v, \mbhat v \rangle\, dt\right| \lesssim C_{\reg}'+C_{\gr}'\quad \text{for all $\mbhat v \perp \mbhat y$}.\label{eq:estimate_second_deriv_line_tilde_omega_k_sum}
\end{equation}
\end{lemma}
\begin{proof}
Let $\mbhat v \perp \mbhat y$. 
Let $r = 0$ if $|\mb x_0| \leq 4$, and otherwise let $r\geq 1$ be so that $|\mb x_0| \in [2^{r-1}, 2^r)$. 
For $k < r - 5$ the support of $\widetilde \omega_k$ does not intersect $\ell$ and 
\begin{align*}
    \int_{-\infty}^{\infty} \langle (D^2 \widetilde \omega_k(\mb x_0 + t \mbhat y))\mbhat v, \mbhat v\rangle\, dt = 0.
\end{align*}
For $r-5 < k < r+5$ we put the absolute values inside the integral and use the second derivative regularity condition,
\begin{align*}
    \Bigl|\int_{-\infty}^{\infty}  \langle (D^2 \widetilde \omega_k(\mb x_0 + t \mbhat y))\mbhat v, \mbhat v\rangle\, dt\Bigr| &\lesssim \int_{-\infty}^{\infty}  |D^2 \widetilde \omega_k(\mb x_0 + t \mbhat y)|\, dt \\ 
    &\lesssim 2^k \| D^2 \widetilde \omega_k \|_{\infty} \leq C_{\reg}'.
\end{align*}
Next, let $k > r+5$. We translate the integral to a line through the origin using the third derivative regularity condition,
\begin{align*}
    \left|\int_{-\infty}^{\infty} \langle (D^2 \widetilde \omega_k(\mb x_0 + t \mbhat y))\mbhat v, \mbhat v\rangle\, dt\right| \leq \left|\int_{-\infty}^{\infty} \langle (D^2 \widetilde \omega_k(t \mbhat y))\mbhat v, \mbhat v\rangle\, dt\right| + C_{\reg}'|\mb x_0|2^{-k}.
\end{align*}
By the hypothesis that $\pi_{S^{d-1}} \omega_k = q_k$ and Lemma \ref{lem:second_deriv_spherical_prj} on the second derivative of spherical projections,
\begin{align*}
    \int_{-\infty}^{\infty} \langle (D^2 \widetilde \omega_k(t \mbhat y))\mbhat v, \mbhat v\rangle\, dt = 2q_k.
\end{align*}
Thus
\begin{align*}
    \sum_k \left|\int_{-\infty}^{\infty} \langle (D^2 \widetilde \omega_k(\mb x_0 + t \mbhat y))\mbhat v, \mbhat v\rangle\, dt\right| &\lesssim C_{\reg}' + \sum_{2^k \geq |\mb x_0|} \left(C_{\reg}'|\mb x_0|2^{-k} + |q_k|\right) \\
    &\lesssim C_{\reg}' + C_{\gr}'.
\end{align*}
\end{proof}

Finally, we finish the proof of Lemma \ref{lem:modified_weight_good_lemma}. 
\begin{proof}[Proof of Lemma \ref{lem:modified_weight_good_lemma}]
Let 
\begin{equation}
    \widetilde \omega_{\leq k} = \sum_{0 \leq j \leq k} \widetilde \omega_j.
\end{equation}
By \eqref{eq:estimate_H_tilde_omega_k_sum} and \eqref{eq:estimate_second_deriv_line_tilde_omega_k_sum} the compactly supported weights $\widetilde \omega_{\leq k}$ satisfy the hypotheses of Proposition \ref{prop:Ew_is_psh} uniformly in $k$, and there is some $C \lesssim C_{\reg}' + C_{\gr}'$ such that for all $k \geq 1$,
\begin{equation*}
    u_{\leq k} = E\widetilde \omega_{\leq k} + C|\mb y|
\end{equation*}
is plurisubharmonic and satisfies 
\begin{equation*}
    u_{\leq k}(\mb x) \leq u_{\leq k} (\mb x + i \mb y) \leq u_{\leq k}(\mb x) + 2C|\mb y|.
\end{equation*}
Notice that the sequence $\{\widetilde \omega_k\}_{k=1}^{\infty}$ is uniformly Lipschitz by \eqref{eq:omega_reg_pieces}, and satisfies the uniform growth condition \eqref{eq:uniform_growth_cond_conv_cpct} because of \eqref{eq:sum_qk_omega_tilde}. By Lemma  \ref{lem:Ew_conv_cpct_subsets}, $E\widetilde \omega_{\leq k} \to E\widetilde \omega$ uniformly on compact sets. It follows that 
\begin{equation*}
    u = E\widetilde \omega + C|\mb y|
\end{equation*}
is plurisubharmonic and satisfies
\begin{equation*}
    u(\mb x) \leq u(\mb x + i \mb y) \leq u(\mb x) + 2C|\mb y|. 
\end{equation*}
\end{proof}

\section{Bounded Fourier support from plurisubharmonic functions}\label{sec:constr_entire_from_psh}
In this section we prove Proposition \ref{prop:psh_to_bounded_fourier}. We are given a plurisubharmonic function $u: \C^d \to \R$ satisfying 
\begin{align}
    u(\mb x) &\leq 0 && \text{for all $\mb x \in \R^d$}, \\ 
    u(\mb x) &= 0 && \text{for $|\mb x| \leq 2$}, \\ 
    |u(\mb x_1) - u(\mb x_2)| &\leq C_{\Lip} |\mb x_1 - \mb x_2| && \text{for all $\mb x_1, \mb x_2 \in \R^d$},  \\ 
    u(\mb x) &\leq u(\mb x + i \mb y) \leq u(\mb x) + \rho |\mb y| && \text{for all $\mb x + i \mb y \in \C^d$}.
\end{align} 
We would like to construct an entire function $f: \C^d \to \C$ satisfying 
\begin{align}
    |f(\mb x + i \mb y)|&\leq A\,e^{2\rho |\mb y|}&& \text{for some $A > 0$},\label{eq:psh_to_bdd_fourier_f_growth_y} \\ 
    |f(\mb x)| &\geq \frac{1}{2} && \text{for all $\mb x \in \B_{r_{\min}}$}, \label{eq:prop_f_lower_bd}\\ 
    |f(\mb x)| &\leq C\, e^{u(\mb x)}&& \text{for $\mb x \in \R^d$}, \label{eq:prop_f_bdd_u} \\ 
    \int_{\R^d} |f(\mb x)|^2\, d\mb x &< \infty,
\end{align}
where
\begin{align}
    r_{\min} &= c_d\, \min(\rho, \rho^{-1}), \label{eq:defn_of_rmin}\\ 
    C &= C_d\, e^{C_{\Lip}}\, \max(\rho^{-C_d}, e^{2\rho}).
\end{align}
Following Bourgain we use H\"ormander's $L^2$ theory of the $\delbar$ equation to construct $f$.
\begin{theorem}[H\"ormander \cite{HormanderDelbar}*{Theorem 2.2.1'}]\label{thm:hormander_delbar}
Let $\varphi: \C^d \to \R$ be strictly plurisubharmonic with $\del \delbar \varphi(\mb z) \geq \kappa(\mb z) > 0$. Let $\eta$ be a $(0, 1)$ form on $\C^d$ with $\delbar \eta = 0$. Suppose that 
\begin{equation*}
    \int_{\C^d} |\eta(\mb z)|^2 \frac{e^{-\varphi(\mb z)}}{\kappa(\mb z)} < \infty
\end{equation*}
where we integrate with respect to the Lebesgue measure on $\C^d$. 
Then the equation $\delbar g = \eta$ has a solution $g$ satisfying 
\begin{equation}
    \int_{\C^d} |g(\mb z)|^2 e^{-\varphi(\mb z)} \leq \int_{\C^d} |\eta(\mb z)|^2 \frac{e^{-\varphi(\mb z)}}{\kappa(\mb z)}.\label{eq:hormander_thm_conclusion}
\end{equation}
\end{theorem}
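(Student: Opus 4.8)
The plan is to run the classical H\"ormander Hilbert-space argument. Introduce the three weighted spaces $H_j = L^2_{(0,j)}(\C^d, e^{-\varphi})$ of $(0,j)$-forms whose coefficients are square-integrable against the density $e^{-\varphi}$, for $j = 0,1,2$, and regard $\bar\partial$ as densely defined closed operators $T : H_0 \to H_1$ and $S : H_1 \to H_2$ with their maximal (distributional) domains, so that $ST = 0$ and hence $\overline{\mathrm{Ran}\,T} \subseteq \mathrm{Ker}\,S$. We may assume the right-hand side of~\eqref{eq:hormander_thm_conclusion} is finite, as otherwise there is nothing to prove; and since $\varphi$ is $C^2$ (the general case reducing to this by regularization), the goal becomes to produce $g \in H_0$ with $Tg = \eta$ and $\|g\|_{H_0}^2$ bounded by that quantity.

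First I would carry out the functional-analytic reduction to an a priori inequality. For $f \in \mathrm{Dom}(T^*)$ write $f = f' + f''$ where $f'$ is the orthogonal projection of $f$ onto the closed subspace $\mathrm{Ker}\,S$; then $f'' \perp \mathrm{Ker}\,S \supseteq \overline{\mathrm{Ran}\,T}$, so $f'' \in (\mathrm{Ran}\,T)^\perp = \mathrm{Ker}\,T^* \subseteq \mathrm{Dom}(T^*)$, whence $f' \in \mathrm{Dom}(T^*)$ with $T^*f' = T^*f$, $Sf' = 0$, and $\langle \eta, f\rangle = \langle \eta, f'\rangle$ because $\eta \in \mathrm{Ker}\,S$. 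Thus it suffices to prove the \emph{basic estimate}
\[
\int_{\C^d} \kappa\, |f|^2 e^{-\varphi} \;\le\; \|T^*f\|_{H_0}^2 + \|Sf\|_{H_2}^2 \qquad\text{for all } f \in \mathrm{Dom}(T^*) \cap \mathrm{Dom}(S).
\]
Applying it to $f'$ and using Cauchy--Schwarz gives $|\langle \eta, f\rangle| \le \bigl(\int |\eta|^2 e^{-\varphi}/\kappa\bigr)^{1/2}\,\|T^*f\|_{H_0}$ for every $f \in \mathrm{Dom}(T^*)$; then a Hahn--Banach/Riesz argument (the functional $T^*f \mapsto \langle f, \eta\rangle$ is well defined and bounded on $\mathrm{Ran}\,T^*$) yields $g \in H_0$ with $\|g\|_{H_0}^2 \le \int |\eta|^2 e^{-\varphi}/\kappa$ and $\langle g, T^*f\rangle = \langle \eta, f\rangle$ for all such $f$, i.e.\ $g \in \mathrm{Dom}(T)$ and $Tg = \eta$, which is the conclusion.

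The analytic core is the basic estimate. For a smooth compactly supported form $f = \sum_j f_j\, d\bar z_j$ the Hilbert-space adjoint coincides with the formal adjoint $\vartheta_\varphi f = -\sum_j \delta_j f_j$, where $\delta_j = \partial_{z_j} - (\partial_{z_j}\varphi)$, and a boundary-free integration by parts in the weighted inner product—whose only surviving commutator is $[\partial_{\bar z_k},\, \delta_j] = -\,\partial^2\varphi/\partial z_j\partial\bar z_k$—yields the Bochner--Kodaira--Nakano identity
\[
\|\vartheta_\varphi f\|_{H_0}^2 + \|\bar\partial f\|_{H_2}^2 \;=\; \sum_{j,k} \int \Bigl|\tfrac{\partial f_j}{\partial\bar z_k}\Bigr|^2 e^{-\varphi} \;+\; \sum_{j,k} \int \frac{\partial^2\varphi}{\partial z_j\,\partial\bar z_k}\, f_j\, \overline{f_k}\; e^{-\varphi}.
\]
Dropping the first (nonnegative) sum and invoking the hypothesis $\partial\bar\partial\varphi \ge \kappa\, I$ gives the basic estimate on $C^\infty_c$. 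To upgrade it to all of $\mathrm{Dom}(T^*)\cap\mathrm{Dom}(S)$ I would show that $C^\infty_c$ $(0,1)$-forms are dense there in the graph norm $f \mapsto \|f\| + \|T^*f\| + \|Sf\|$: multiply by cutoffs $\chi_R$ equal to $1$ on $\B_R$, supported in $\B_{2R}$, with $|\nabla\chi_R| \lesssim R^{-1}$, and note that the commutator errors (the wedge of $\bar\partial\chi_R$ with $f$ in $Sf$, and the contraction of $\partial\chi_R$ into $f$ in $T^*f$) are pointwise $\lesssim R^{-1}|f| \in L^2(e^{-\varphi})$, so $\chi_R f \to f$ in the graph norm; then mollify each compactly supported $\chi_R f$ to get smooth approximants and pass the inequality to the limit.

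As for difficulty: the Hilbert-space bookkeeping, the orthogonal decomposition of $\mathrm{Ker}\,S$, and the Hahn--Banach step are soft. The genuine content lies in two places. The first is the integration-by-parts identity above, where one must verify that the single curvature commutator is exactly what survives and that it enters with the positive sign against $e^{-\varphi}$—this positivity is the heart of H\"ormander's theorem and is the step I expect to be the main obstacle to get precisely right. The second is the density of $C^\infty_c$ in the graph norm: it is essential here that the ambient space is all of $\C^d$ with its complete flat K\"ahler metric, since the cutoff/mollification scheme breaks down without completeness—this is precisely why the analogous theorem on a bounded domain requires pseudoconvexity and, in H\"ormander's treatment, an auxiliary modification of the weight.
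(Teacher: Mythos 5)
This theorem is not proved in the paper---it is quoted directly from H\"ormander's work---and your argument is precisely the classical proof from that source: the duality reduction to the basic estimate $\int\kappa|f|^2e^{-\varphi}\le\|T^*f\|^2+\|Sf\\|^2$, the Morrey--Kohn--H\"ormander identity for compactly supported forms with the curvature commutator entering with the correct positive sign, and graph-norm density of $C^\infty_c$ via cutoffs, which works because $\C^d$ with the flat metric is complete. The outline is correct; the only step you gloss is the reduction from a general plurisubharmonic $\varphi$ with the distributional bound $\del\delbar\varphi\ge\kappa$ (as the paper intends) to the $C^2$ case, which is handled by solving with the mollified weights $\varphi*\rho_\varepsilon$ and passing to a weak limit---standard, but worth a line more than ``by regularization.''
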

We mean $\del \delbar \varphi(\mb z) \geq \kappa(\mb z)$ in the distributional sense ($\varphi$ can be an arbitrary plurisubharmonic function). 

The first important point is that the $L^2$ bound \eqref{eq:hormander_thm_conclusion} can be converted to a pointwise bound by subharmonicity.
\begin{lemma}\label{lem:pointwise_bd_subharmonicity}
Let $U \subset \C^d$ be an open set and $f: U \to \C$ analytic. If $\B_r(\mb z) \subset U$ then
\begin{equation}
    |f(\mb z)| \leq C_d r^{-d}\, \| f \|_{L^2(\B_r(\mb z))}.
\end{equation}
\end{lemma}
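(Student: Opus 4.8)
The plan is to deduce this from the solid sub-mean value property of subharmonic functions applied to $|f|^2$. Since $f$ is holomorphic on $U$, the function $|f|$ is plurisubharmonic, and since $t\mapsto t^2$ is convex and nondecreasing on $[0,\infty)$, the composition $|f|^2$ is plurisubharmonic as well; in particular, viewing $U$ as an open subset of $\C^d\cong\R^{2d}$, the function $|f|^2$ is subharmonic. (Alternatively one sees this directly from $\Delta_{\R^{2d}}|f|^2 = 4\sum_j |\partial_{z_j}f|^2\geq 0$.)

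Next I would invoke the solid sub-mean value inequality: if $v$ is subharmonic on $U$ and $\B_r(\mb z)\subset U$, then
\[
    v(\mb z) \leq \fint_{\B_r(\mb z)} v(\mb w)\, d\mb w,
\]
which follows from the usual spherical sub-mean value inequality by integrating over the spheres of radius $0<\rho<r$ against $\rho^{2d-1}\,d\rho$ and normalizing.

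Applying this with $v=|f|^2$ and using $\vol(\B_r(\mb z)) = c_d\, r^{2d}$ in $\R^{2d}$ gives
\[
    |f(\mb z)|^2 \leq \frac{1}{\vol(\B_r(\mb z))}\int_{\B_r(\mb z)} |f(\mb w)|^2\, d\mb w = \frac{C_d}{r^{2d}}\,\|f\|_{L^2(\B_r(\mb z))}^2 ,
\]
and taking square roots yields $|f(\mb z)|\leq C_d\, r^{-d}\,\|f\|_{L^2(\B_r(\mb z))}$, as claimed.

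There is no substantial obstacle here: the only points needing a word of justification are that $|f|^2$ is subharmonic on $\R^{2d}$ (immediate from holomorphy of $f$) and the passage from the spherical to the solid mean value inequality (a routine integration in the radial variable), both of which are standard.
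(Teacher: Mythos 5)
Your proof is correct and follows essentially the same route as the paper: observe that $|f|^2$ is plurisubharmonic (hence subharmonic on $\C^d\cong\R^{2d}$), apply the solid sub-mean value inequality over $\B_r(\mb z)$, and use $\vol(\B_r)\sim r^{2d}$ before taking square roots. The extra justifications you supply (the Laplacian computation and the passage from spherical to solid means) are standard and consistent with the paper's argument.
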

\begin{proof}
Because $f$ is analytic on $U$, $|f|^2$ is plurisubharmonic and thus subharmonic on $U$. It follows that 
\begin{align*}
    |f(\mb z)|^2 \leq \fint_{\B_r(\mb z)} |f(\mb w)|^2 \leq C_d r^{-2d}\, \| f \|_{L^2(\B_r(\mb z))}^2. 
\end{align*}
\end{proof}
If we ignore \eqref{eq:prop_f_lower_bd} for a moment and just want $f$ to satisfy \eqref{eq:psh_to_bdd_fourier_f_growth_y} and \eqref{eq:prop_f_bdd_u} then we could try applying H\"ormander's theorem to construct $f$ with $\eta = 0$. That doesn't work because the solution could be $f = 0$---remember that \eqref{eq:prop_f_lower_bd} quantifies the non-vanishing of $f$. To fix this we write $f = h - g$ where $h$ is a bump function in a neighborhood of the origin and $g$ solves the inhomogenous equation $\delbar g = \delbar h$. Now we can use Theorem \ref{thm:hormander_delbar} to construct $g$. By adding a new term to the plurisubharmonic weight $u$, we can force $g$ to be small near the origin and then get a lower bound on $f$ near the origin.

\subsection{Construction of the plurisubharmonic weight \texorpdfstring{$\varphi$}{phi}}
Let $\eta_{\geq 10}$ be a bump function supported on $\{\mb x\in \R^d\, :\, |\mb x| \geq 5\}$ and which takes the value $1$ for $|\mb x| \geq 10$. Let
\begin{equation}\label{eq:defn_of_omega_0}
    \omega_0 = -\eta_{\geq 10} \frac{|\mb x|}{(\log(2+|\mb x|))^2} 
\end{equation}
Then $\omega_0$ satisfies the hypotheses of Proposition \ref{prop:BM_step1_prop} so for some $c_d > 0$, $c_d\omega_0$ has a plurisubharmonic extension $u_0: \C^d \to \R$ which satisfies
\begin{align}
    |u_0(\mb x_1) - u_0(\mb x_2)| &\leq |\mb x_1 - \mb x_2| \quad \text{for all $\mb x_1, \mb x_2 \in \R^d$},\label{eq:u0_Lip} \\ 
    u_0(\mb x) &\leq u_0(\mb x + i \mb y) \leq u_0(\mb x) + \frac{1}{2}|\mb y|.   \label{eq:u0_bdd_y}
\end{align}
Let 
\begin{equation}
    \varphi =  2u + 20d\, \log |\mb z|_{\infty} + \rho\, u_0 + \frac{\rho}{2}(\langle \mb y \rangle - 1).
\end{equation}
We add the term $20d\, \log |\mb z|_{\infty}$ to get lower bounds on $f$ near the origin, we add $\rho\, u_0$ to balance out the prior term for $\mb x \in \R^d$ far from the origin, and we add $\frac{\rho}{2}(\langle \mb y \rangle - 1)$ to make $\varphi$ strictly plurisubharmonic.
Notice that 
\begin{align*}
    \log |\mb z|_{\infty} = \max_j \log |z_j|
\end{align*}
is plurisubharmonic because it is the maximum of a collection of plurisubharmonic functions. We compute that
\begin{equation}
    \del \delbar \langle \mb y \rangle = \frac{1}{4}\langle \mb y \rangle^{-3} (1 + |\mb y|^2 - \mb y \mb y^t)
\end{equation}
as a Hermitian matrix. The minimal eigenvector is $\mbhat y$, and 
\begin{equation}
    \langle (\del \delbar \langle \mb y \rangle)\mbhat y, \mbhat y \rangle = \frac{1}{4}\langle \mb y \rangle^{-3}
\end{equation}
so $(\del\delbar \langle \mb y \rangle) (\mb x + i \mb y) \geq \frac{1}{4}\langle \mb y \rangle^{-3}$. Because the other terms in $\varphi$ are also plurisubharmonic we have
\begin{equation}
    \del \delbar \varphi(\mb x + i \mb y) \geq \kappa(\mb z)    = \frac{\rho}{8}\, \langle \mb y \rangle^{-3}  \label{eq:varphi_deldelbar_bd}.
\end{equation}

\subsection{Proof of Proposition \ref{prop:psh_to_bounded_fourier}}
Let $h$ be a smooth bump function on $\C^d$ with $h = 1$ on $\B_{1/2}$ and $\supp h \subset \B_1$. Let $\eta = \delbar h$, so $\eta$ is supported on $\B_1 \setminus \B_{1/2}$. Notice that because 
\begin{equation*}
    2u(\mb x + i \mb y) + \rho u_0(\mb x + i \mb y) + \frac{\rho}{2}(\langle \mb y \rangle - 1) \geq 2u(\mb x) + \rho u_0(\mb x) = 0 \quad \text{for $|\mb x| \leq 2$}, 
\end{equation*}
we have 
\begin{equation}
    \varphi(\mb x + i \mb y) \geq  -20d\, \log 2\sqrt{2d} \quad \text{for $1/2 \leq |\mb x + i \mb y| \leq 2$.}
\end{equation}
It follows that 
\begin{align}
    \int_{\C^d} |\eta(\mb z)|^2 \frac{e^{-\varphi(\mb z)}}{\kappa(\mb z)} \leq C_d\, \rho^{-1},
\end{align}
and by Theorem \ref{thm:hormander_delbar} there is a smooth $g$ such that $\delbar g = \eta$ and 
\begin{equation}\label{eq:hormander_bound_g}
    \int_{\C^d} |g(\mb z)|^2 e^{-\varphi(\mb z)} \leq C_d\, \rho^{-1}. 
\end{equation}
Another way to write this is that $\| g\,e^{-\varphi/2} \|_{L^2(\C^d)} \leq C_d\, \rho^{-1/2}$. Define
\begin{equation}
    f = h - g.
\end{equation}
By construction, $\delbar f = 0$ so $f$ is entire. 

First we prove some upper bounds on $g$ near the origin. Because $\eta = 0$ on $\B_{1/2}$, $g$ is analytic on $\B_{1/2}$. Let $\mb x \in \R^d$ with $|\mb x| \leq 1/4$. Applying Lemma \ref{lem:pointwise_bd_subharmonicity} to $g$ with $r = |\mb x|$ we obtain
\begin{align*}
    |g(\mb x)| \leq C_d |\mb x|^{-d}\,  \| g\, e^{-\varphi/2}\|_{L^2(\B_{2|\mb x|})}\, \| e^{\varphi/2} \|_{L^{\infty}(\B_{2|\mb x|})}.
\end{align*}
We have 
\begin{equation*}
    \varphi(\mb x + i \mb y) \leq 3\rho |\mb y| + 20d \log |\mb x + i \mb y|_{\infty}\quad \text{for all $\mb x +i \mb y \in \C^d$},
\end{equation*}
so combining this with the $L^2$ estimate \eqref{eq:hormander_bound_g} we find
\begin{equation}
    |g(\mb x)| \leq C_d \rho^{-1/2}\, |\mb x|^{9d}\, e^{3\rho |\mb x|}\quad \text{for $|\mb x| \leq 1/4$}.\label{eq:bound_g_x_small}
\end{equation}
Thus $|g(\mb x)| \leq 1/2$ when $|\mb x| \leq r_{\min}$, as long as the constant $c_d$ in \eqref{eq:defn_of_rmin} is small enough. So $|f(\mb x)| \geq 1/2$ when $|\mb x| \leq r_{\min}$.

Now we prove that $f(\mb x)$ decays like $e^{u(\mb x)}$ on $\R^d$. To deal with the fact that $g$ is not analytic on $\B_1 \setminus \B_{1/2}$ we prove an $L^2$ bound for $f$ on the open set $U = \{\mb z \in \C^d \, :\, |\mb z| > 1/8\}$. We have 
\begin{equation}\label{eq:L2_bd_f}
    \| f\, e^{-\varphi/2} \|_{L^2(U)} \leq \| h\,e^{-\varphi/2} \|_{L^2(U)} + \| g\,e^{-\varphi/2}\|_{L^2(U)} \leq C_d \max(1,\rho^{-1/2}).  
\end{equation}
Let $\mb z \in \C^d$ with $|\mb z| \geq 1/4$. Apply Lemma \ref{lem:pointwise_bd_subharmonicity} to $f$ with $r = 1/8$. Then 
\begin{equation}\label{eq:pointwise_bd_f_geq18}
    |f(\mb z)| \leq C_d \| f\, e^{-\varphi/2}\|_{L^2(U)} \, \| e^{\varphi/2} \|_{L^{\infty}(\B_1(\mb z))}.
\end{equation}
For $\mb x \in \R^d$ with $|\mb x| \geq 1/4$ we have
\begin{align*}
    \sup_{|\mb w - \mb x| \leq 1} 2u(\mb w) &\leq 2u(\mb x) + 2C_{\Lip} + 2\rho, \\ 
    \sup_{|\mb w - \mb x| \leq 1} \rho u_0(\mb w) &\leq \rho u_0(\mb x) + \rho + \frac{1}{2}\rho, \\ 
    \sup_{|\mb x' + i \mb y' - \mb x| \leq 1} \frac{1}{2}\rho (\langle \mb y'\rangle - 1) &\leq \frac{1}{2}\rho, \\ 
    \sup_{|\mb w - \mb x| \leq 1} 20d\, \log |\mb w|_{\infty} &\leq 20d\, \log(1+|\mb x|) \leq -\frac{1}{2}\rho u_0(\mb x) + B.
\end{align*}
Here $B > 0$ is a constant and we may estimate
\begin{align*}
    B &= \sup_{r \geq 0} \left(20d\log (1+r) - \frac{1}{2}c_d \rho 1_{r \geq 10} \frac{r}{(\log (2+r))^2}\right) \\
    &\leq C_d \max(1, \log \rho^{-1}). 
\end{align*}
Combining these we obtain
\begin{equation*}
    |f(\mb x)| \leq C_d \, e^{C_{\Lip}}\, \max(\rho^{-C_d}, e^{2\rho})\, e^{u(\mb x)} e^{\frac{1}{2}c_d\rho \omega_0}\quad \text{for $\mb x \in \R^d$, $|\mb x| \geq 1/4$}.
\end{equation*}
Using the estimate \eqref{eq:bound_g_x_small} for $|\mb x| \leq 1/4$, the estimate \eqref{eq:prop_f_bdd_u} holds for all $\mb x \in \R^d$. Moreover, because of the term $e^{\frac{1}{2}c_d\rho \omega_0}$ in the upper bound, $f \in L^2(\R^d)$. 

Finally we show that $f$ has appropriate growth as $|\mb y| \to \infty$. We have
\begin{align*}
    20d \log |\mb x + i \mb y|_{\infty} + \rho u_0(\mb x + i \mb y) &\leq 20d \max(1,\log |\mb x|, \log |\mb y|) - \\
    &\qquad c_d \rho\, 1_{|\mb x|\geq 10} \frac{|\mb x|}{(\log (2+|\mb x|))^2} + \frac{\rho}{2} |\mb y| \\ 
    &\leq \frac{3}{2}\rho |\mb y| + A'
\end{align*}
for some constant $A' = A'(\rho, d) > 0$. So applying \eqref{eq:pointwise_bd_f_geq18} for $|\mb x+i\mb y| \geq 1/4$,
\begin{equation*}
    |f(\mb x + i \mb y)| \leq C_d \max(1, \rho^{-1/2}) \|e^{\varphi/2} \|_{L^{\infty}(\B_1(\mb x + i \mb y))} \leq C_d \max(1, \rho^{-1/2}) e^{A'/2+2\rho} e^{2\rho|\mb y|}.
\end{equation*}
Certainly $f$ is bounded for $|\mb z| \leq 1/4$ so \eqref{eq:psh_to_bdd_fourier_f_growth_y} holds for all $\mb z \in \C^d$.

\begin{remark*}
The quantitative bounds in Proposition \ref{prop:psh_to_bounded_fourier} can be improved in various ways, we don't try to optimize for this.
\end{remark*}

\section{Finishing the proof of the main theorem}\label{sec:finishing_pf_main_theorem}

\subsection{Proof of Proposition \ref{prop:existence_damping}}
We now construct weight functions adapted to line porous sets, and use Theorem \ref{thm:higher_dim_BM} to prove Proposition \ref{prop:existence_damping}. Let $\mb Y$ be $\nu$-porous on lines from scales $\mu$ to $h^{-1}$. Let $0 < \alpha < 1$ be the damping function parameter to be chosen later.

Consider the sequence of dyadic annuli $A_k = \{\mb x \in \R^d\, :\, 2^{k} \leq |\mb x| \leq 2^{k+1}\}$ for $k \geq 1$. Let 
\begin{equation}
    W_k = \frac{2^k}{k^s}
\end{equation}
where $s \in (0, 1)$ is a parameter to be chosen later (we will end up choosing $s = 0.2$). 
Let $\mc Q_k = \{\Q\}$ be a collection of finitely overlapping cubes of width $W_k$ so that $A_k \subset \bigcup_{\Q \in \mc Q_k} \frac{1}{2}\Q$, here $\frac{1}{2}\Q$ has the same center as $\Q$ and half the width. We require that
\begin{equation*}
    \bigcup_{\Q \in \mc Q_k} \Q \subset \{\mb x \in \R^d\, :\, 2^{k-1} \leq |\mb x| \leq 2^{k+2}\}.
\end{equation*}
For each $\Q \in \mc Q_k$, let $\eta_{\Q}$ be a bump function supported in $\Q$ and taking the value $1$ on $\frac{1}{2}\Q$. We construct $\eta_{\Q}$ by dilating a fixed bump function of width $1$, which gives the derivative estimate
\begin{align}
    \|D^a \eta_{\Q}\|_{\infty} &\lesssim_{d,a} W_k^{-a} \qquad \text{for all $a\geq 0$}. \label{eq:deriv_estimate_etaQ}
\end{align}
For all $ \mb x \in A_k$ we have 
\begin{equation*}
    \sum_{\Q \in \mc Q_k} \eta_{\Q}(\mb x) \in [1, C]
\end{equation*}
for some universal constant $C$. 
Let
\begin{align*}
    \mc S_{\mb Y,k} &= \{\Q \in \mc Q_k\, :\, \Q \cap (\mb Y\cap A_k) \neq \emptyset\}, \\ 
     \mb Y_k &= \bigcup_{\Q \in \mc S_{\mb Y,k}} \Q.
\end{align*}
Set 
\begin{align}\label{eq:defn_omega_j}
    \omega_k &= -\frac{2^k}{k^{\alpha}}\sum_{\Q \in \mc S_{\mb Y,k}} \eta_{\Q}.
\end{align}
Notice that $\supp \omega_k \subset \{\mb x \in \R^d\, :\, 2^{k-1}\leq |\mb x| \leq 2^{k+2}\}$, and that
\begin{equation}\label{eq:magnitude_omegak_in_A_k_Y}
    \omega_k(\mb x) \leq -\frac{2^k}{k^{\alpha}}\quad \text{for $\mb x \in \mb Y \cap A_k$.}
\end{equation}
The difference from Bourgain and Dyatlov's construction is that they take $\alpha = s$, and we allow for $\alpha$ to be much closer to $1$. Let $k_0 \geq 2$ be the smallest integer such that $W_{k_0} > \mu$ (this choice will be clear when we discuss the growth condition). Set 
\begin{equation}
    \omega = \sum_{k \geq k_0} \omega_k,
\end{equation}
notice that $\omega(\mb x) = 0$ for $|\mb x| \leq 2$. 
\begin{figure}
    \centering
    \includegraphics[width=0.5\linewidth]{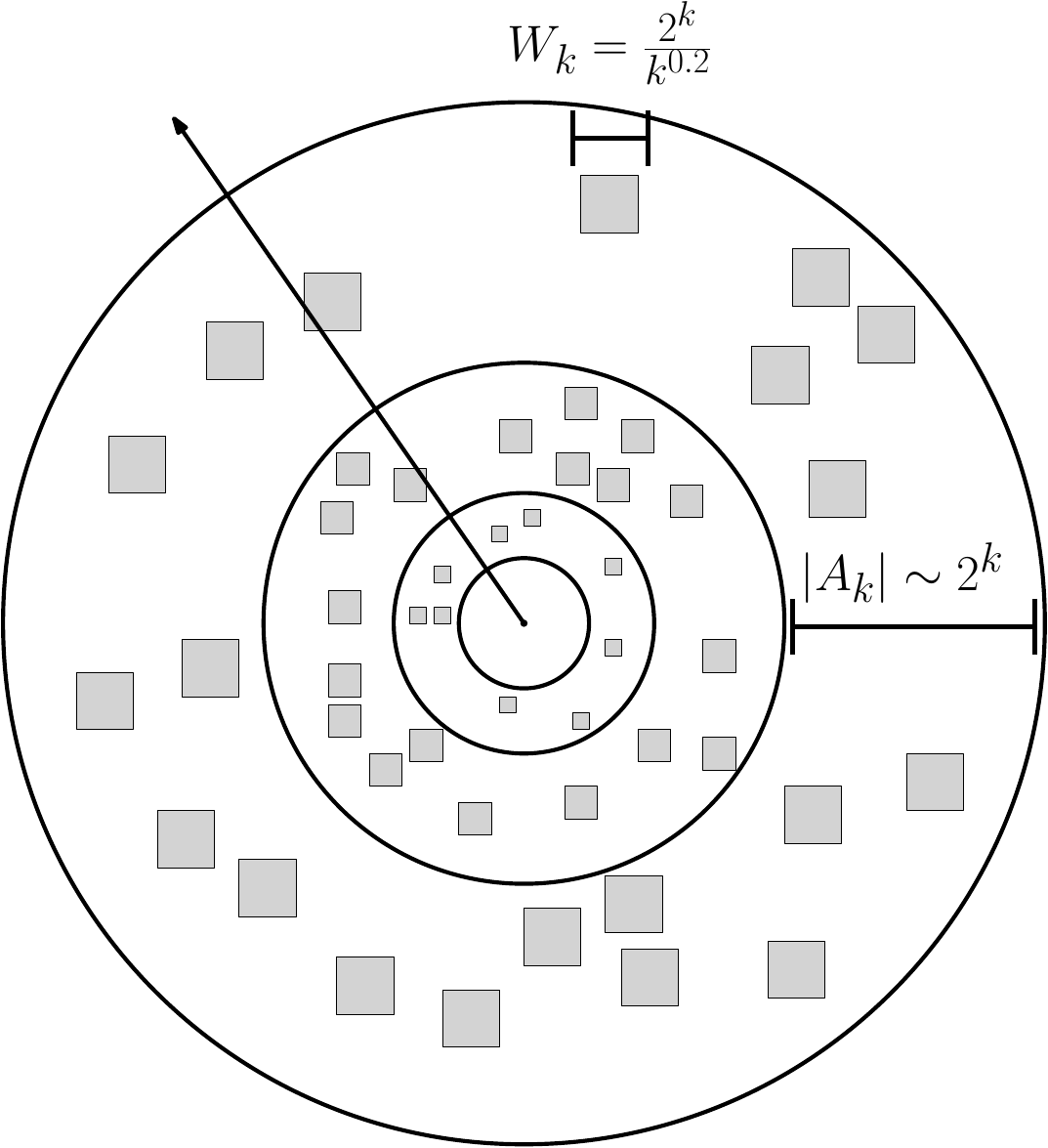}
    \caption{Within each dyadic annulus the weight is a sum of bump functions on boxes.}
    \label{fig:weight_func_constr}
\end{figure}
See Figure \ref{fig:weight_func_constr} for an image representing the weight. By \eqref{eq:magnitude_omegak_in_A_k_Y}, 
\begin{equation}\label{eq:lower_bd_omega_first}
    \omega(\mb x) \leq -\frac{1}{20}\frac{|\mb x|}{(\log (2+|\mb x|))^{\alpha}}\quad \text{for $|\mb x| > 2^{k_0}$ and $\mb x \in \mb Y$},
\end{equation}
so 
\begin{equation}\label{eq:lower_bd_omega}
    \omega(\mb x) \leq -\frac{1}{20}\frac{|\mb x|}{(\log (2+|\mb x|))^{\alpha}} + C(\mu) \quad \text{for all $\mb x \in \mb Y$}.
\end{equation}
Now we establish some regularity. For any $a\geq 0$, $k\geq 1$, we have 
\begin{equation}
    |D^a\omega_k| \lesssim_{a,d} W_k^{-a} 2^k k^{-\alpha} \sum_{\Q \in \mc S_{\mb Y,k}} 1_{\Q} \lesssim 
    2^{(1-a)k} k^{as-\alpha} 1_{\mb Y_k}
\end{equation}
where we use \eqref{eq:deriv_estimate_etaQ} for the first inequality and finite overlapping of the cubes in $\mc Q$ for the second inequality. 
As long as $3s < \alpha$, $\omega$ satisfies the regularity condition \eqref{eq:regularity_condition_0to3} with a constant $C_{\reg}$ that depends only on the dimension.

Next we discuss the growth condition. We have
\begin{equation}
    \mb Y_k \subset (\mb Y\cap A_k) + \B_{2W_k\sqrt{d}}.
\end{equation}
Because $\mb Y \subset [-3h^{-1},3h^{-1}]^d$, $\mb Y_k$ is empty if $2^{k}  > 3h^{-1}\sqrt{d}$ (this is the only place we use that $\mb Y \subset [-3h^{-1},3h^{-1}]^d$). Increasing $k_0$ if necessary by a value that only depends on $d$, we may assume $2W_k\sqrt{d} < h^{-1}$. 
If $k \geq k_0$ then $\mu < 2W_k\sqrt{d} < h^{-1}$ and by Lemma \ref{lem:basic_props_line_porous}\ref{lempart:nbhd_porous_set}, $\mb Y_k$ is $\nu/2$-porous on lines from scales $4W_k\sqrt{d}/\nu$ to $h^{-1}$ (this is a vacuous statement if $4W_k\sqrt{d}/\nu > h^{-1}$).

Let $\ell$ be any line. If $4W_k\sqrt{d}/\nu > 2^k/\sqrt{d}$ then $k^s < 4d/\nu$ and
\begin{align*}
    |\mb Y_k \cap \ell| \lesssim 2^k \lesssim_{\nu,d} 2^kk^{-s}. 
\end{align*}
Here $|\bullet|$ is the one-dimensional Lebesgue measure on $\ell$. 
Otherwise, we can split up $Y_k\cap \ell = \bigcup_{j} Y_k \cap \tau_j$ where each $\tau_j$ is a line segment on $\ell$ of length $2^k/\sqrt{d}$, and there are $\lesssim \sqrt{d}$-many line segments in the union.
Applying Corollary \ref{cor:line_porous_small_intersection} to each line segment and summing, 
\begin{equation}\label{eq:estimate_mass_Xj_l}
    |\mb Y_k \cap \ell| \lesssim_{\nu,d} 2^k\, k^{-s\gamma}\quad \text{ for all lines $\ell$}
\end{equation}
for some $\gamma = \gamma(\nu) > 0$. Thus if $\ell = \{t\mbhat y\, :\, t \in \R\}$ is a line through the origin, we see
\begin{align}\label{eq:int_over_lines_omega_k}
    2^{-k}\int_0^{\infty} |\omega_k(t\mbhat y)|\, dt \lesssim k^{-\alpha} |\mb Y_k \cap \ell| \lesssim 2^k\, k^{-(\alpha+s\gamma)}.
\end{align}
Let $G^*(r)$ be the growth function defined in \eqref{eq:defn_of_G*}. Let $r \in [2^k, 2^{k+1})$. We have the pointwise bound
\begin{align}
    G^*(r) &\lesssim \sup_{\mbhat y \in \mbS^{d-1}} 2^{-k} \int_{2^{k-1}}^{2^{k+2}} |\omega(t\mbhat y)|\, dt \nonumber\\ 
    &\lesssim \sup_{\mbhat y \in \mbS^{d-1}} 2^{-k}  \sum_{k-3 \leq j \leq k+3} \int_{0}^{\infty}|\omega_{j}(t\mbhat y)| \, dt \nonumber\\
    &\lesssim \frac{r}{(\log (2+r))^{\alpha+s\gamma}}. 
\end{align}
As long as $\alpha + s\gamma > 1$, the growth condition \eqref{eq:higher_dim_BM_growth_cond} is satisfied with a constant that depends on $\alpha+s\gamma$, $\nu$, and $d$.
We may choose $s = 0.2$ universally and $\alpha > 1 - 0.1\gamma(\nu)$. Then $-\alpha+3s < -0.3$ and $\alpha+s\gamma > 1+0.1\gamma$. 

The weight $\omega$ satisfies \eqref{eq:regularity_condition_0to3} and \eqref{eq:higher_dim_BM_growth_cond} with constants $C_{\reg}$ and $C_{\gr}$ that depend only on $\nu$ and $d$. We apply Theorem \ref{thm:higher_dim_BM} with spectral radius $\sigma/2 < 1$ to obtain a function $f \in L^2(\R^d)$ satisfying 
\begin{align*}
    \supp \hat f &\subset \B_{\sigma/2}, \\ 
    |f(\mb x)| &\geq \frac{1}{2} && \text{for $|\mb x| \leq c_d \sigma$}, \\ 
    |f(\mb x)| &\leq C(d,\mu)  \sigma^{-C_d} \exp\left(-c\, \sigma \frac{|\mb x|}{(\log (2+|\mb x|))^{\alpha}} \right) && \text{for $\mb x \in \mb Y$}, \\ 
    |f(\mb x) &\leq C(d,\mu) \sigma^{-C_d} && \text{for $\mb x \in \R^d$}.
\end{align*}
Here $c = c(\nu, d)$. 
Now let $\varphi: \R^d \to \R$ be a fixed Schwartz function with $\supp \hat \varphi \subset \B_1$ and $\int \varphi = 1$. Let $\varphi_{c_d\sigma/10}(\mb x) = \varphi((c_d\sigma/10)\mb x)$ so $\supp \hat \varphi \subset B_{c_d\sigma/10}$ and $\int \hat \varphi = 1$. Let $f_1 = f \varphi$. Then $\hat f_1 = \hat f * \hat \varphi$ and 
\begin{align*}
    \supp \hat f_1 &\subset \B_{\sigma}, \\ 
    |f_1(\mb x)| &\geq \frac{1}{2} && \text{for $|\mb x| \leq c_d \sigma/2$}, \\ 
    |f_1(\mb x)| &\leq C(d,\mu)  \sigma^{-C_d} \exp\left(-c\, \sigma \frac{|\mb x|}{(\log (2+|\mb x|))^{\alpha}} \right) && \text{for $\mb x \in \mb Y$}, \\ 
    |f_1(\mb x)| &\leq C(d,\mu) \sigma^{-C_d} \langle \mb x\rangle^{-d} && \text{for $\mb x \in \R^d$}.
\end{align*}
The last equation follows from $|\varphi(\mb x)| \leq C_d\langle \mb x\rangle^{-d}$.
Dividing through by $C(d,\mu) \sigma^{-C_d}$, we obtain a damping function with parameters $c_1 = \sigma$, $c_2 = c(d,\mu) \sigma^{C_d}$, $c_3 = c(\nu, d) \sigma$. 

\begin{remark*}
We may take 
\begin{equation}
    \alpha = 1 - 0.1\gamma(\nu) = 1 - c\frac{\nu}{|\log \nu|}
\end{equation}
for some absolute $c > 0$. 
\end{remark*}

\subsection{Proof of Theorem \ref{thm:FUP_higher_dim}}
Let 
\begin{itemize}
    \item $\mb X \subset [-1,1]^d$ be $\nu$-porous on balls from scales $h$ to 1,  
    \item $\mb Y \subset [-h^{-1}, h^{-1}]^d$ be $\nu$-porous on lines from scales $1$ to $h^{-1}$.
\end{itemize}
By Lemma \ref{lem:basic_props_line_porous}, for any $h < s < 1$ and $\eta \in [-h^{-1}s-5, h^{-1}s+5]^d$, the set 
\begin{equation*}
    s\mb Y + [-4,4]^d + \eta
\end{equation*}
is $\nu/2$-porous on lines from scale $10\sqrt{d}/\nu$ to $h^{-1}s$, and so by applying Proposition \ref{prop:existence_damping} with $\mu = 10\sqrt{d}/\nu$ it admits a damping function with parameters $c_1 = \nu/20\sqrt{d}$ and $c_2,c_3, \alpha \in (0,1)$ depending only on $\nu$ and $d$. Then by Theorem \ref{thm:FUP_conditional_damping}, there exists $\beta = \beta(\nu, d) > 0$ and $\widetilde C = \widetilde C(\nu, d) > 0$ so that for any $f \in L^2(\R^d)$ 
\begin{equation}
    \supp \hat f \subset \mb Y \, \Longrightarrow\, \| f1_{\mb X} \|_2 \leq \widetilde C h^{\beta}\, \| f\|_{2}.
\end{equation}

\appendix \section{Some miscellaneous pieces}\label{sec:loose_ends}
In this appendix we collect various technical statements used throughout the paper. Everything here is either standard or already in the literature. 
\subsection{Proof of Theorem \ref{thm:FUP_conditional_damping} from the version in \texorpdfstring{\cite{HanSchlag}}{Han-Schlag}}\label{sec:han_schlag_comparison}

We state Han and Schlag's theorem in their original terminology and prove that our version, Theorem \ref{thm:FUP_conditional_damping}, follows from their version. The differences are minor. 
First of all they have a slightly different definition of damping functions which is based on $\ell_1$ rather than $\ell_2$ norms. 
We denote the $\ell_1$ norm by $|\mb x|_1$ and the usual $\ell_2$ norm by $|\mb x|_2$.
\begin{definition}[\cite{HanSchlag}*{Definition 4.1}]\label{defn:damping_func_l1}
The set $\mb Y \subset \R^d$ \textit{admits an $\ell_1$ damping function} with parameters $c_1, c_2, c_3, \alpha\in (0, 1)$ if there exists a function $\psi \in L^2(\R^d)$ satisfying
\begin{align}
    \supp \hat  \psi &\subset [-c_1, c_1]^d, \\ 
    \|  \psi \|_{L^2([-1,1]^d)} &\geq c_2, \\ 
    | \psi(\mb x)| &\leq \langle \mb x \rangle^{-d} && \text{for all $\mb x \in \R^d$}, \\ 
    | \psi(\mb x)| &\leq \exp\left(-c_3 \frac{|\mb x|_1}{(\log (2+|\mb x|_1))^{\alpha}}\right)&& \text{for all $\mb x \in \mb Y$}.
\end{align}
\end{definition}
We note that in their paper they instead take $\supp \psi \subset [-c_1,c_1]^d$ and look at decay on the Fourier side but this is equivalent by taking a Fourier transform.
Because $|\mb x|_1 \leq \sqrt{d}\, |\mb x|_2$,
\begin{equation*}
    \frac{|\mb x|_1}{(\log (2+|\mb x|_1))^{\alpha}} \leq \frac{\sqrt{d}|\mb x|_2}{(\log (2+\sqrt{d}|\mb x|_2))^{\alpha}} \leq \sqrt{d}\frac{|\mb x|_2}{(\log (2+|\mb x|_2))^{\alpha}},
\end{equation*}
so an $\ell_2$ damping function with parameters $c_1,c_2,c_3,\alpha$ is an $\ell_1$ damping function with parameters $c_1,c_2,c_3/\sqrt{d},\alpha$.

Han and Schlag also use a slightly different definition of porosity, which we call box porosity.

\begin{definition}[\cite{HanSchlag}*{Definition 5.1}]\label{defn:box_porous}
Say that $\mb X \subset [-1,1]^d$ is \textit{box porous at scale $L \geq 3$ with depth $n$}, where $L$ is an integer, if the following holds. Denote by $\mc C_n$ the cubes obtained from $[-1,1]^d$ by partitioning it into congruent cubes of side length $L^{-n}$. The condition on $\mb X$ is that for all $\Q \in \mc C_n$ with $\Q \cap \mb X \neq \emptyset$, there exists $\Q' \in \mc C_{n+1}$ so that $\Q' \subset \Q$ and $\Q' \cap \mb X = \emptyset$. 
\end{definition}

Now we show that a $\nu$-porous set is also box porous. 
\begin{lemma}\label{lem:compare_porosity}
Let $\mb X \subset [-1,1]^d$ be $\nu$-porous on balls from scale $h$ to $1$. Then $\mb X$ is box porous at scale $L = \lceil \nu^{-1}\sqrt{d}\rceil$ with depth $n$ for all $n\geq 0$ with $L^{-n} \geq h$. 
\end{lemma}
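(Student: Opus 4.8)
The plan is to unwind both definitions and check the required porosity property directly on a single cube. Fix $n \geq 0$ with $L^{-n} \geq h$, and let $\Q \in \mc C_n$ be one of the subcubes of $[-1,1]^d$ of side length $L^{-n}$ with $\Q \cap \mb X \neq \emptyset$. I want to produce a child cube $\Q' \in \mc C_{n+1}$, of side length $L^{-n-1}$, with $\Q' \subset \Q$ and $\Q' \cap \mb X = \emptyset$. The idea is to apply the $\nu$-porosity of $\mb X$ on balls to a well-chosen ball. First I would take a point $\mb p \in \Q \cap \mb X \subset [-1,1]^d$ and consider the ball $\B$ of diameter $R := L^{-n} \in [h, 1]$ centered appropriately — more precisely, centered so that $\B$ is contained in (a slight enlargement of) $\Q$, or better, simply take $\B$ to be the ball of diameter $R$ that is inscribed-ish around $\Q$; the cleanest choice is the ball concentric with $\Q$ of diameter equal to the side length $L^{-n}$, which is contained in $\Q$. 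Then $h < R \leq 1$ so $\nu$-porosity applies: there is $\mb x \in \B$ with $\B_{\nu R}(\mb x) \cap \mb X = \emptyset$.

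Next I would pass from this empty ball $\B_{\nu R}(\mb x)$ to an empty child cube. The child cubes in $\mc C_{n+1}$ have side length $L^{-n-1} = L^{-1} R$, so their diameter is $\sqrt{d}\, L^{-1} R$. By the choice $L = \lceil \nu^{-1}\sqrt{d} \rceil \geq \nu^{-1}\sqrt{d}$, we have $\sqrt{d}\, L^{-1} \leq \nu$, so each child cube has diameter at most $\nu R$. Hence any child cube $\Q' \in \mc C_{n+1}$ that contains the point $\mb x$ (or more safely, whose intersection with $\B_{\nu R/2}(\mb x)$ is nonempty) is entirely contained in $\B_{\nu R}(\mb x)$, and therefore $\Q' \cap \mb X = \emptyset$. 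The one wrinkle is that I also need $\Q' \subset \Q$: since $\B \subset \Q$ and $\mb x \in \B$, the point $\mb x$ lies in $\Q$; but a child cube containing $\mb x$ need not lie in $\Q$ if $\mb x$ is near $\partial \Q$. To handle this I would instead pick $\B$ slightly smaller, say of diameter $R/2$ concentric with $\Q$, so that $\B_{\nu R/2}(\mb x) \subset \Q$ as well; then choose $\Q'$ to be a child cube meeting $\B_{\nu R/4}(\mb x)$, which forces both $\Q' \subset \B_{\nu R/2}(\mb x) \subset \Q$ and $\Q' \cap \mb X = \emptyset$ provided $\sqrt{d}\,L^{-1} \leq \nu/4$; this only changes $L$ by a constant factor, so one should simply set $L = \lceil 4\nu^{-1}\sqrt d\rceil$ or track the constant carefully. (If the paper insists on $L = \lceil \nu^{-1}\sqrt d\rceil$ exactly, a more careful packing argument shows a suitable child cube always exists, but the constant-factor enlargement is the clean route.)

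The main obstacle, such as it is, is purely bookkeeping: making sure the child cube one selects simultaneously lies inside $\Q$ and inside the porosity hole, which is a matter of choosing the auxiliary ball a constant factor smaller than $\Q$ and choosing $L$ a constant factor larger than $\nu^{-1}\sqrt d$. There is no analytic content. I would organize the write-up as: (1) fix $\Q$, pick $\mb p \in \Q \cap \mb X$ and the concentric ball $\B$ of radius $\sim R$; (2) apply Definition of $\nu$-porosity on balls at scale $R = L^{-n} \in [h,1]$ to get an empty ball $\B_{\nu R}(\mb x)$ with $\mb x$ well inside $\Q$; (3) observe child cubes have diameter $\leq \nu R$ by the definition of $L$, so a child cube meeting a slightly shrunken empty ball is contained in both $\Q$ and the hole, giving the desired $\Q'$; (4) conclude box porosity at scale $L$ with depth $n$ for every admissible $n$.
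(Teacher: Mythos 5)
Your core argument is the same as the paper's: inscribe in $\Q$ a ball $\B$ of diameter $R=L^{-n}\in[h,1]$, apply ball porosity to get $\mb x\in\B$ with $\B_{\nu R}(\mb x)\cap\mb X=\emptyset$, and observe that cubes of $\mc C_{n+1}$ have diameter $\sqrt{d}\,L^{-n-1}\leq \nu L^{-n}$, so a child cube containing $\mb x$ sits inside the hole. The one place you diverge is the ``wrinkle'' about $\Q'\subset\Q$, and that wrinkle is illusory. Since $L$ is an integer, $\mc C_{n+1}$ is a refinement of $\mc C_n$: every coarse gridline is a fine gridline, so each cube of $\mc C_{n+1}$ is contained in exactly one cube of $\mc C_n$ (a child cube can never straddle $\partial\Q$, which is what your worry implicitly assumes), and $\Q$ is precisely the union of the cubes of $\mc C_{n+1}$ contained in it. Since $\mb x\in\B\subset\Q$, you may simply choose $\Q'$ to be a child of $\Q$ containing $\mb x$; then $\Q'\subset\Q$ is automatic and $\Q'\subset\B_{\nu R}(\mb x)$ by the diameter bound. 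This is exactly the paper's proof, with the stated $L=\lceil\nu^{-1}\sqrt d\rceil$ and no shrinking of the ball.

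Because of this, your ``clean route'' (ball of diameter $R/2$, $L=\lceil 4\nu^{-1}\sqrt d\rceil$) should not be the main argument: as written it proves a different statement from the lemma, whose specific constant is used downstream (the deduction of Theorem \ref{thm:FUP_conditional_damping} from Theorem \ref{thm:han_schlag_original} needs $\nu/(20\sqrt d)\leq (2L)^{-1}$, so enlarging $L$ is not automatically free, although a factor of $4$ happens to still fit). Shrinking the ball to diameter $R/2$ also requires $h<R/2$, so it misses the bottom scales $h\leq L^{-n}<2h$ that the lemma covers. Your fallback remark that ``a more careful packing argument'' handles the stated $L$ is true but is only asserted; the needed argument is exactly the nestedness observation above, so state it rather than gesture at it.
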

\begin{proof}
Let $L$, $n$ be as above and let $\mc C_n$, $\mc C_{n+1}$ be as in the definition of box porosity. Let $\Q \in \mc C_n$.  Let $\B \subset \Q$ be a ball of diameter $L^{-n}$. By the definition of porosity, there is some $\mb x \in \B$ such that $\B_{\nu L^{-n}}(\mb x) \cap \mb X = \emptyset$. Let $\Q' \in \mc C_{n+1}$ be a cube containing $\mb x$ and $\Q' \subset \Q$. Then because $\nu L^{-n} \geq L^{-n-1}\sqrt{d}$, we have $\Q' \subset \B_{\nu L^{-n}}(\mb x)$ and $\Q' \cap \mb X = \emptyset$ as needed.
\end{proof}

We can now state Han and Schlag's theorem exactly as it appears in \cite{HanSchlag}. 
\begin{theorem}[\cite{HanSchlag}*{Theorem 5.1}]\label{thm:han_schlag_original}
Suppose that
\begin{itemize}
    \item $\mb X \subset [-1,1]^d$ is box porous at scale $L \geq 3$ with depth $n$, for all $n \geq 0$ with $L^{n+1} \leq N$.
    \item $\mb Y \subset [-N, N]^d$ is such that for all $n \geq 0$ with $L^{n+1} \leq N$ one has that for all $\eta \in [-NL^{-n} - 3, NL^{-n}+3]^d$
    the set 
    \begin{equation*}
        L^{-n} \mb Y + [-4,4]^d + \eta
    \end{equation*}
    admits an $\ell_1$ damping function with parameters $c_1 = (2L)^{-1} \in (0, \frac{1}{2}]$ and $c_2, c_3 \in (0, 1)$.
\end{itemize}
Assume $0 < c_3 < c_3^*(d)$. Then there exists $\beta = \beta(L, c_2, c_3, d, \alpha) > 0$ and $\widetilde C = \widetilde C(L, c_2, c_3, d, \alpha) > 0$ so that any $f \in L^2(\R^d)$ with $\supp \hat f \subset \mb Y$ satisfies 
\begin{equation*}
    \| f 1_{\mb X}\|_2 \leq \widetilde C N^{-\beta} \| f \|_2
\end{equation*}
for all $N \geq N_0(L, c_2, c_3, d, \alpha)$. 
\end{theorem}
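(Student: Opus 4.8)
The statement is Theorem~5.1 of \cite{HanSchlag}; the strategy is the iterative scheme that Bourgain and Dyatlov developed in one dimension, and I would carry it out in $\R^d$ in three steps. Throughout write $h=N^{-1}$ and $K(r)=r/(\log(2+r))^{\alpha}$ for the sublinear rate appearing in the damping hypothesis. The first and decisive step is a \emph{quantitative unique continuation principle}: if $\{\Q_{\mb n}\}_{\mb n\in\Z^d}$ is a family of cubes of width $w$, one inside each unit cube $\mb n+[0,1]^d$, and $U=\bigcup_{\mb n}\Q_{\mb n}$, then $\supp\hat f\subset\mb Y$ should force $\|f1_U\|_2\ge c\|f\|_2$ for some $c=c(w,c_2,c_3,d,\alpha)>0$.

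To prove this principle I would first use the $\ell_1$ damping functions for the relevant dilates and translates of $\mb Y$ to pass from $f$ to auxiliary functions whose Fourier transforms decay subexponentially, of the form $\lesssim\exp(-cK(|\xi|))$, while changing the $L^2$ mass on the holes $U$ by at most a bounded factor (the polynomial weight $\langle\mb x\rangle^{-d}$ in the definition of a damping function is what keeps this loss controlled when the damping is applied across the $\sim\log N$ dyadic scales inside $[-N,N]^d$). I would then prove a propagation-of-smallness estimate for a function $g$ with $|\hat g(\xi)|\lesssim\exp(-cK(|\xi|))$: split $g=g_{\le D}+g_{>D}$ with $g_{\le D}$ frequency-localized to $|\xi|\le D$ and $\|g_{>D}\|_{\infty}\lesssim\exp(-cK(D))$; the band-limited piece obeys a Remez--Tur\'an-type inequality on each unit cube, so smallness of $g$ on the density-$\delta$ subset $U$ of that cube yields smallness of $g$ on the whole cube up to a factor $(C/\delta)^{CD}$, and choosing $D$ as a suitable function of the ambient scale balances $(C/\delta)^{CD}$ against $\exp(-cK(D))$. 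Summing over unit cubes in the relevant box gives the principle. The hypotheses $c_3<c_3^{*}(d)$ and $N\ge N_0$ are exactly the technical conditions that make this balancing close and keep the logarithmic factors $(\log(2+r))^{\alpha}$ negligible at the scales $r\le N$ that occur.

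The second step is a \emph{single-scale estimate}. Box porosity of $\mb X$ at scale $L$ with depth $n$ says precisely that every cube of $\mc C_n$ meeting $\mb X$ contains a sub-cube of $\mc C_{n+1}$ disjoint from $\mb X$. Rescaling a cube $\Q\in\mc C_n$ to the unit cube carries $\hat f$ to a function whose Fourier support lies in a translate of $L^{-n}\mb Y+[-4,4]^d$, which by hypothesis admits an $\ell_1$ damping function with $c_1=(2L)^{-1}$, so the unique continuation principle applies on $\Q$ with the empty sub-cube in the role of $U$ (hence $w\sim L^{-1}$). Summing the resulting lower bounds over $\Q\in\mc C_n$ produces, for every $n$ with $L^{n+1}\le N$,
\begin{equation*}
    \|f1_{\mb X+\B_{L^{-n-1}}}\|_2\le(1-c)\,\|f1_{\mb X+\B_{L^{-n}}}\|_2,
\end{equation*}
with $c$ depending only on $L,c_2,c_3,d,\alpha$.

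Finally I would iterate this inequality from $n=0$ down to the largest $n_{*}$ with $L^{n_{*}+1}\le N$. Since $\mb X\subset[-1,1]^d$ and $L^{-n_{*}-1}\gtrsim N^{-1}$, this gives $\|f1_{\mb X}\|_2\le(1-c)^{n_{*}}\|f\|_2$, and $(1-c)^{n_{*}}\le\widetilde C N^{-\beta}$ with $\beta=-\log(1-c)/\log L>0$ and $\widetilde C,N_0$ of the required form. The main obstacle is the first step: establishing quantitative unique continuation for functions whose Fourier transform merely decays subexponentially rather than being compactly supported. This is what replaces the Beurling--Malliavin/Paley--Wiener input available in one dimension, and it is the genuine technical contribution of \cite{HanSchlag}; Steps~2 and~3 are a direct transcription of the Bourgain--Dyatlov argument.
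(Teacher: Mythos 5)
You should first note the status of this statement in the paper: it is quoted verbatim from Han and Schlag and is used as a black box; the paper never proves it, it only deduces Theorem \ref{thm:FUP_conditional_damping} from it. So your write-up is being measured against the three-step strategy the paper sketches in the introduction (and against Han--Schlag's actual argument), and at that level your Steps 2 and 3 (single-scale gain from box porosity plus damping, then iteration over $\sim \log_L N$ scales to get $N^{-\beta}$) are the right transcription of the Bourgain--Dyatlov scheme.

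The genuine gap is in Step 1, which you yourself identify as the entire content of the theorem but then sketch with an argument that does not close. You propose to split $g=g_{\le D}+g_{>D}$, use a Remez/Tur\'an/Logvinenko--Sereda inequality for the band-limited piece on a width-$w$ cube, and balance the loss $(C/\delta)^{CD}$ against the tail $\exp(-cK(D))$ with $K(D)=D/(\log(2+D))^{\alpha}$. This balancing is quantitatively impossible: the Remez-type loss is exponential in $D$, namely $e^{CD\log(C/\delta)}$, while the available gain is only subexponential, $e^{-cD/(\log D)^{\alpha}}$ with $\alpha<1$, so for every large $D$ the discarded tail dominates the retained lower bound; and for bounded $D$ the low-frequency piece $g_{\le D}$ may carry essentially none of the mass of $f$, since $\hat f$ can live at frequencies of size $\sim N$. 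No choice of $c_3<c_3^*(d)$ or $N\ge N_0$ repairs this --- those constants are not what makes a truncation argument close, because no truncation argument closes. The actual quantitative unique continuation for functions with merely subexponential Fourier decay (the quasianalytic regime $\alpha<1$, where $\int K(r)/(1+r^2)\,dr=\infty$) is proved in \cite{HanSchlag} by a genuinely different, multi-scale/complex-analytic argument, following \cite{BourgainDyatlov} and \cite{JinZhang} in one dimension; deferring it with the remark that it is ``the genuine technical contribution'' of that paper means your proposal is an outline of the known strategy rather than a proof of the stated theorem.
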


We prove that Theorem \ref{thm:han_schlag_original} implies Theorem \ref{thm:FUP_conditional_damping}. 

\begin{proof}[Proof of Theorem \ref{thm:FUP_conditional_damping} from Theorem \ref{thm:han_schlag_original}]
Suppose that the hypotheses of Theorem \ref{thm:FUP_conditional_damping} are satisfied with parameters $\nu$, $h$ and $c_1 = \nu/(20\sqrt{d})$, $c_2,c_3,\alpha \in (0,1)$. 
Let $L = \lceil \nu^{-1}\sqrt{d}\rceil$ and let $N = \lceil h^{-1} \rceil$. Notice that $c_1 < \frac{1}{2L}$ as needed.

For all $h < s < 1$ and $\eta \in [-h^{-1}s - 5, h^{-1}s+5]^d$ the set 
\begin{equation*}
    s\mb Y + [-4,4]^d + \eta
\end{equation*}
admits an $\ell_1$ damping functions with parameters $c_1 = \nu/(20\sqrt{d})$ and $c_2,c_3/\sqrt{d},\alpha$. Because it is a strictly stronger property for $\mb Y$ to admit a damping function with a larger $c_3$ parameter, we can assume for free that $c_3 < c_3^*(d)$. 

We have $\mb Y \subset [-N,N]^d$. Let $n\geq 0$ be such that $L^{n+1} \leq N$. By Lemma \ref{lem:compare_porosity}, $\mb X$ is box porous at scale $L$ with depth $n$. Also, $h < L^{-n} \leq 1$. We have
\begin{equation}
    [-NL^{-n} - 3, NL^{-n}+3]^d \subset [-h^{-1}L^{-n} - 5, h^{-1} L^{-n} + 5]^d, 
\end{equation}
so for any $\eta \in [-NL^{-n} - 3, NL^{-n}+3]^d$, the set $L^{-n} \mb Y + [-4,4]^d + \eta$ admits an $\ell_1$ damping function with parameters $c_1 = (2L)^{-1}$, $c_2 \in (0, 1)$, and $0 < c_3 < c_3^*(d)$. By Theorem \ref{thm:han_schlag_original} there exists 
\begin{align*}
    \beta &= \beta(L, c_2, c_3, d, \alpha) = \beta(\nu, c_2, c_3, d, \alpha) > 0, \\ 
    \widetilde C &= \widetilde C(L, c_2, c_3, d, \alpha) = \widetilde C(\nu, c_2, c_3, d, \alpha) > 0
\end{align*} 
such that any $f \in L^2(\R^d)$ with $\supp \hat f \subset \mb Y$ satisfies 
\begin{equation*}
    \| f1_{\mb X}\|_2 \leq \widetilde C h^{\beta} \|f \|_2
\end{equation*}
for all $h < 1/100$. We absorbed the condition that $N > N_0$ in Theorem \ref{thm:han_schlag_original} into the constant $\widetilde C$. 
\end{proof}

\subsection{Basic properties of line porous sets}

\begin{lemma}\label{lem:basic_props_line_porous}
Let $\mb X \subset \R^d$ be $\nu$-porous on lines from scales $\alpha_0$ to $\alpha_1$.
\begin{enumerate}[leftmargin=*,labelindent=
\parindent,=itemsep=0pt,label=(\alph*), ref=(\alph*)]
    \item Let $\alpha_0 < r < \alpha_1$ and let $\nu' < \nu$.
    Then $\mb X + \B_r$ is $\nu'$-porous on lines from scales $r/(\nu - \nu')$ to $\alpha_1$.\label{lempart:nbhd_porous_set}

    \item For any $s > 0$, the dilate $s \cdot \mb X$ is $\nu$-porous on lines from scales $s\, \alpha_0$ to $s\, \alpha_1$. 
    
    \item Let $\ell \subset \R^d$ be a line. Let $\mb X|_{\ell} = \mb X \cap \ell$, and view $\mb X|_{\ell}$ as a subset of $\R$. Then $\mb X|_{\ell}$ is $\nu$-porous from scales $\alpha_0$ to $\alpha_1$. \label{lemitem:line_porous_restriction}
\end{enumerate}
\end{lemma}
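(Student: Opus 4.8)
The plan is to prove the three parts by directly unwinding the definition of line porosity; each amounts to bookkeeping with scales, and I do not expect a genuine obstacle.

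For part \ref{lempart:nbhd_porous_set}, fix a line segment $\tau$ of length $R$ with $r/(\nu-\nu') < R < \alpha_1$. Since $\nu - \nu' < \nu \le 1/3 < 1$ we have $R > r/(\nu-\nu') > r > \alpha_0$, so $\mb X$ is $\nu$-porous on the scale-$R$ segment $\tau$: there is $\mb x \in \tau$ with $\B_{\nu R}(\mb x) \cap \mb X = \emptyset$. I claim $\B_{\nu' R}(\mb x) \cap (\mb X + \B_r) = \emptyset$. Indeed, if $\mb z \in \B_{\nu' R}(\mb x)$ and $\mb z = \mb w + \mb b$ with $\mb w \in \mb X$, $|\mb b| < r$, then $|\mb x - \mb w| \le |\mb x - \mb z| + |\mb z - \mb w| < \nu' R + r < \nu R$, since $R > r/(\nu - \nu')$ rearranges to $\nu' R + r < \nu R$; this contradicts $\B_{\nu R}(\mb x) \cap \mb X = \emptyset$. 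As $\mb x \in \tau$, this exhibits the required hole, so $\mb X + \B_r$ is $\nu'$-porous on lines from scales $r/(\nu-\nu')$ to $\alpha_1$.

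Part (b) is pure rescaling. Given a segment $\tau$ of length $R$ with $s\alpha_0 < R < s\alpha_1$, the segment $\tau/s$ has length $R/s \in (\alpha_0, \alpha_1)$, so line porosity of $\mb X$ yields $\mb x \in \tau/s$ with $\B_{\nu R/s}(\mb x) \cap \mb X = \emptyset$. Then $s\mb x \in \tau$ and $\B_{\nu R}(s\mb x) \cap (s\cdot \mb X) = s\bigl(\B_{\nu R/s}(\mb x) \cap \mb X\bigr) = \emptyset$, which is exactly what is needed.

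For part \ref{lemitem:line_porous_restriction}, fix an isometric identification $\ell \cong \R$. A ball (interval) $\B \subset \R$ of diameter $R \in (\alpha_0, \alpha_1)$ corresponds to a length-$R$ segment $\tau \subset \ell$, and line porosity of $\mb X$ produces $\mb x \in \tau$ with $\B_{\nu R}(\mb x) \cap \mb X = \emptyset$. Since $\mb x \in \ell$, the slice $\B_{\nu R}(\mb x) \cap \ell$ is precisely the radius-$\nu R$ interval about $\mb x$ inside $\ell$; it is disjoint from $\mb X \cap \ell = \mb X|_\ell$, so transporting back to $\R$ gives a point in $\B$ with a radius-$\nu R$ interval disjoint from $\mb X|_\ell$. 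This is the definition of $\nu$-porosity on balls in $\R$. The only mild point of care is in part \ref{lempart:nbhd_porous_set}: one must check that the claimed starting scale $r/(\nu-\nu')$ simultaneously forces $R$ into the porosity range of $\mb X$ and makes the triangle-inequality estimate $\nu' R + r < \nu R$ hold; both follow from the single inequality $R > r/(\nu-\nu')$.
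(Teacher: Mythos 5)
Your proof is correct and follows essentially the same route as the paper's: in each part you locate the hole guaranteed by line porosity of $\mb X$ and transfer it to the modified set, with (a) handled by the same triangle-inequality/scale bookkeeping (the paper phrases it as $\B_{\nu' R}(\mb x) \cap (\mb X + \B_{(\nu-\nu')R}) = \emptyset$ with $(\nu-\nu')R > r$, which is equivalent to your estimate $\nu'R + r < \nu R$). No changes needed.
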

\begin{proof}~
\begin{enumerate}[label=(\alph*)]
    \item Let $\tau$ be a line segment of length $R$ with $r/(\nu - \nu') < R < \alpha_1$. Let $\mb x \in \tau$ be such that $\B_{\nu R}(\mb x) \cap \mb X = \emptyset$. Then $\B_{\nu' R}(\mb x) \cap (\mb X + \B_{(\nu - \nu')R}) = \emptyset$ as well. By the choice of $R$, $(\nu - \nu')R > r$ as needed.
    
    \item Let $\tau$ be a segment of length $R$ with $s\, \alpha_0 < R < s\,\alpha_1$. There is some $\mb x \in s^{-1}\cdot \tau$ such that $\B_{s^{-1}\nu R}(\mb x) \cap \mb X = \emptyset$. Then $\B_{\nu R}(s\mb x) \cap (s \cdot \mb X) = \emptyset$. 

    \item Let $\tau \subset \ell$ be a segment of length $R$. There is some $\mb x \in \tau$ such that $\B_{\nu R}(\mb x) \cap \mb X = \emptyset$. Then $(\B_{\nu R}(\mb x) \cap \ell) \cap \mb X|_{\ell} = \emptyset$.
\end{enumerate}
\end{proof}

\begin{lemma}\label{lem:relationship_box_porosity_regularity}
Let $\mb X \subset [-1,1]^d$ be box porous at scale $L$ with depth $n$ for all $0\leq n < N$. Then with $|\mb X|$ the Lebesgue measure,
\begin{equation}
    |\mb X| \leq 2^d\, (1-L^{-d})^N.
\end{equation}
\end{lemma}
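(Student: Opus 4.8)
The plan is to track, scale by scale, the union of the partition cubes that meet $\mb X$, and to show this union loses at least a fraction $L^{-d}$ of its measure each time we refine. For $0 \le n \le N$ set
\[
    A_n = \bigcup\{\Q \in \mc C_n : \Q \cap \mb X \neq \emptyset\}.
\]
Since the cubes of $\mc C_n$ partition $[-1,1]^d$ and therefore cover $\mb X$, we have $\mb X \subset A_n \subset [-1,1]^d$ for every $n$; in particular $|A_0| \le 2^d$. (If $\mb X = \emptyset$ the claimed inequality is trivial, so we may assume $\mb X \neq \emptyset$.)

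The heart of the argument is the one-step bound $|A_{n+1}| \le (1 - L^{-d})|A_n|$ for $0 \le n < N$. Fix $\Q \in \mc C_n$ with $\Q \cap \mb X \neq \emptyset$. It is subdivided into $L^d$ congruent cubes of $\mc C_{n+1}$, each of measure $L^{-d}|\Q|$, and box porosity at depth $n$ provides some sub-cube $\Q' \subset \Q$, $\Q' \in \mc C_{n+1}$, with $\Q' \cap \mb X = \emptyset$; hence at most $L^d - 1$ of the $L^d$ sub-cubes of $\Q$ meet $\mb X$. Up to the cube boundaries, which have measure zero, $A_{n+1} \cap \Q$ is exactly the union of those sub-cubes of $\Q$ that meet $\mb X$, so $|A_{n+1} \cap \Q| \le (L^d - 1)L^{-d}|\Q| = (1 - L^{-d})|\Q|$. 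Every cube of $\mc C_{n+1}$ meeting $\mb X$ lies inside some cube of $\mc C_n$ meeting $\mb X$, so summing over all $\Q \in \mc C_n$ with $\Q \cap \mb X \neq \emptyset$ gives $|A_{n+1}| \le (1 - L^{-d})\sum_\Q |\Q| = (1 - L^{-d})|A_n|$.

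Iterating this bound from $n = 0$ to $n = N-1$ (which is exactly where the hypothesis "box porous at scale $L$ with depth $n$ for all $0 \le n < N$" is used) yields $|A_N| \le (1 - L^{-d})^N |A_0| \le 2^d(1 - L^{-d})^N$, and since $\mb X \subset A_N$ we conclude $|\mb X| \le 2^d(1 - L^{-d})^N$. I do not expect a genuine obstacle here; the only point needing slight care is the elementary bookkeeping that refining a cube meeting $\mb X$ produces at most $L^d - 1$ sub-cubes meeting $\mb X$, which is precisely the box porosity hypothesis, and that the cube boundaries contribute nothing to the measure. The rest is a geometric-series induction.
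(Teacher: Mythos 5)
Your proof is correct and rests on the same observation as the paper's: each refinement from $\mc C_n$ to $\mc C_{n+1}$ costs a factor $(1-L^{-d})$ because box porosity guarantees at least one empty sub-cube out of the $L^d$ children. The paper packages this by iterating the supremum $J(n) = \sup_{\Q \in \mc C_n}|\mb X \cap \Q|$ via $J(n) \leq (L^d-1)J(n+1)$ and plugging in the trivial bound $J(N) \leq L^{-Nd}$ at the end, whereas you iterate the measure of the covering $A_n$ directly; these are the same argument with slightly different bookkeeping.
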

\begin{proof}
We proceed by induction. Define
\begin{equation*}
    J(n) = \sup_{\Q \in \mc C_n} |\mb X \cap \Q|. 
\end{equation*}
where $\mc C_n$ is the family of congruent $L^{-n}$ cubes described in Definition \ref{defn:box_porous}. If $\mb X$ is box porous at scale $n$, then 
\begin{align*}
    J(n) &\leq (L^d - 1)J(n+1),\\
    |\mb X| &\leq 2^d J(0) \leq 2^d (L^d-1)^N J(N). 
\end{align*}
Using the trivial bound $J(N) \leq L^{-Nd}$, 
\begin{equation*}
    |\mb X| \leq 2^d (1 - L^{-d})^N.
\end{equation*}
\end{proof}

\begin{lemma}\label{lem:relationship_porosity_regularity}
Let $\mb X \subset \R^d$ be $\nu$-porous on balls from scales $\alpha_0$ to $\alpha_1$. Then there is some $C, \gamma > 0$ depending only on $\nu$ and $d$ such that for any ball $\B$ of radius $\alpha_0 < R < \alpha_1$, 
\begin{equation*}
    |\mb X \cap B| \leq C R^d \left(\frac{\alpha_0}{R}\right)^{\gamma}.
\end{equation*}
\end{lemma}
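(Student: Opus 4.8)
The plan is to prove Lemma~\ref{lem:relationship_porosity_regularity} by reducing it to the box-porosity volume bound already established in Lemma~\ref{lem:relationship_box_porosity_regularity}, via the comparison Lemma~\ref{lem:compare_porosity}. So this is essentially a bookkeeping argument that converts the geometric decay $(1-L^{-d})^N$ into a power of $\alpha_0/R$.

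\textbf{Step 1: Rescale to a unit box.} Given a ball $\B$ of radius $R$ with $\alpha_0 < R < \alpha_1$, enclose $\B$ in a cube $\Q_0$ of side length $2R$ and consider the rescaled set $\mb X' = \frac{1}{2R}(\mb X \cap \Q_0 - (\text{center}))$, which sits inside $[-1,1]^d$. By Lemma~\ref{lem:basic_props_line_porous}(b) (applied to the ball-porous analogue, which holds by the same one-line argument, or just by rescaling the porosity hypothesis directly), $\mb X'$ is $\nu$-porous on balls from scales $\alpha_0/(2R)$ to $\alpha_1/(2R)$; in particular it is $\nu$-porous from scales $\alpha_0/(2R)$ to $1$.

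\textbf{Step 2: Apply box porosity.} Set $L = \lceil \nu^{-1}\sqrt d\rceil$ as in Lemma~\ref{lem:compare_porosity}. That lemma tells us $\mb X'$ is box porous at scale $L$ with depth $n$ for every $n \ge 0$ with $L^{-n} \ge \alpha_0/(2R)$, i.e.\ for every $n < N$ where $N = \lfloor \log_L (2R/\alpha_0)\rfloor$. Lemma~\ref{lem:relationship_box_porosity_regularity} then gives $|\mb X'| \le 2^d (1 - L^{-d})^N$. Undoing the scaling, $|\mb X \cap \B| \le |\mb X \cap \Q_0| = (2R)^d\,|\mb X'| \le (2R)^d\, 2^d\,(1-L^{-d})^N$.

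\textbf{Step 3: Convert geometric decay to a power.} Write $(1-L^{-d})^N = \exp(-c_1 N)$ with $c_1 = -\log(1-L^{-d}) > 0$ depending only on $\nu,d$. Since $N \ge \log_L(2R/\alpha_0) - 1 = \frac{1}{\log L}\log(R/\alpha_0) + \text{O}(1)$, we get $(1-L^{-d})^N \le C_2 (\alpha_0/R)^{\gamma}$ with $\gamma = c_1/\log L > 0$ and $C_2$ depending only on $\nu,d$. Absorbing the $2^d$ and $C_2$ factors into a single constant $C = C(\nu,d)$, and noting the bound is trivial (take $C$ large) when $N \le 0$, i.e.\ when $R$ is within a bounded factor of $\alpha_0$, yields
\[
    |\mb X \cap \B| \le C R^d\left(\frac{\alpha_0}{R}\right)^{\gamma}
\]
as claimed.

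\textbf{Main obstacle.} There is no real obstacle here --- the content is entirely in the two lemmas already proved. The only things to be slightly careful about are (i) making sure the ball-porosity-to-box-porosity comparison survives the rescaling cleanly (one should state the rescaling invariance of ball porosity, which is immediate), and (ii) handling the degenerate regime $R \asymp \alpha_0$ where $N$ is not large, which is disposed of by choosing the constant $C$ large enough. The rest is the standard "geometric decay over dyadic-type scales becomes a power of the scale ratio" computation.
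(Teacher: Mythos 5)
Your proof is correct and follows the same route as the paper: rescale a $2R$-cube containing $\B$ to the unit cube, apply Lemma~\ref{lem:compare_porosity} to pass to box porosity, invoke Lemma~\ref{lem:relationship_box_porosity_regularity} for the volume bound, and convert the geometric decay $(1-L^{-d})^N$ into a power of $\alpha_0/R$. The only cosmetic difference is your slightly more explicit handling of the degenerate regime where $N$ is not large, which the paper absorbs silently into the constant.
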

\begin{proof}
We use Lemma \ref{lem:compare_porosity} to reduce to proving the statement for box porous sets. Let $\Q$ be a $2R$-cube containing $\B$. Let $\mb X' = R^{-1}\cdot (\Q\cap \mb X) - \mb v \subset [-1,1]^d$ be a translated and rescaled copy of $\Q\cap \mb X$. Then $\mb X'$ is $\nu$-porous on balls from scales $\frac{\alpha_0}{R}$ to $1$, so it is also box porous at scale $L = \lceil \nu^{-1}\sqrt{d}\rceil$ with depth $n$ for all $n\geq 0$ with $L^{-n} \geq \frac{\alpha_0}{R}$. Let $N > 0$ be the smallest integer so that $L^{-N} < \frac{\alpha_0}{R}$. By Lemma \ref{lem:relationship_box_porosity_regularity},
\begin{equation*}
    |\mb X \cap \B| \leq R^d |\mb X'| \leq 2^d R^d\, (1-L^{-d})^{N}.
\end{equation*}
Let $\gamma = \gamma(\nu, d) > 0$ be such that $L^{-\gamma} = 1 - L^{-d}$. Then 
\begin{equation*}
    |\mb X \cap \B| \leq 2^dR^d L^{-N\gamma} \leq 2^d\, R^d\, \left(\frac{\alpha_0}{R}\right)^{\gamma}.
\end{equation*}
\end{proof}

\begin{remark*}
We can take 
\begin{equation}
    \gamma \geq c\frac{L^{-d}}{\log L} = c_d \frac{\nu^d}{|\log \nu|}.
\end{equation}
\end{remark*}

By combining Lemma \ref{lem:relationship_porosity_regularity} and Lemma \ref{lem:basic_props_line_porous}\ref{lemitem:line_porous_restriction}, we find that line porous sets have small intersections with lines. 
\begin{corollary}\label{cor:line_porous_small_intersection}
Let $\mb Y \subset \R^d$ be $\nu$-porous on lines from scales $\alpha_0$ to $\alpha_1$. Let $\tau$ be a line segment of length $\alpha_0 < R < \alpha_1$. Then there is some $C, \gamma > 0$ depending only on $\nu$ such that
\begin{align*}
    |\tau \cap \mb Y| \leq C R\, \left(\frac{\alpha_0}{R}\right)^{\gamma}.
\end{align*}
Here $|\bullet|$ is the one-dimensional Lebesgue measure on $\tau$. 
\end{corollary}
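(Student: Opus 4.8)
The plan is to reduce to the one-dimensional regularity estimate for porous sets, Lemma~\ref{lem:relationship_porosity_regularity}, applied in $\R$. Let $\ell \subset \R^d$ be the line through $\tau$, and parametrize $\ell$ by arclength, so that $\mb Y|_{\ell} = \mb Y \cap \ell$ and $\tau$ become subsets of $\R$, with $\tau$ an interval of length $R$.

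First I would apply Lemma~\ref{lem:basic_props_line_porous}\ref{lemitem:line_porous_restriction}: since $\mb Y$ is $\nu$-porous on lines from scales $\alpha_0$ to $\alpha_1$, its restriction $\mb Y|_{\ell}$ is $\nu$-porous on balls from scales $\alpha_0$ to $\alpha_1$ when viewed as a subset of $\R$. Next, take $\B \subset \R$ to be the interval of radius $R$ centered at the midpoint of $\tau$; then $\tau \subset \B$ and the radius $R$ lies in the admissible range $(\alpha_0,\alpha_1)$ by hypothesis. Applying Lemma~\ref{lem:relationship_porosity_regularity} with ambient dimension $1$ to $\mb Y|_{\ell}$ and the ball $\B$ yields constants $C,\gamma > 0$ depending only on $\nu$ (the parameter $L = \lceil \nu^{-1}\rceil$ appearing in that lemma depends only on $\nu$ when $d = 1$) such that
\begin{equation*}
    |\mb Y|_{\ell} \cap \B| \leq C\, R\, \left(\frac{\alpha_0}{R}\right)^{\gamma}.
\end{equation*}
Since $\tau \cap \mb Y \subset \mb Y|_{\ell} \cap \B$, the same bound holds for $|\tau \cap \mb Y|$, which is the assertion of the corollary.

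There is no genuine obstacle here: the statement is a direct combination of two earlier lemmas. The only points needing a moment's care are the bookkeeping that realizes the length-$R$ segment $\tau$ inside a ball whose radius still lies in $(\alpha_0,\alpha_1)$, so that Lemma~\ref{lem:relationship_porosity_regularity} applies with the stated range of scales, and the observation that the constants $C,\gamma$ produced by that lemma in dimension one depend only on $\nu$ (and in particular one may take $\gamma \geq c\,\nu/|\log \nu|$).
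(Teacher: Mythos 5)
Your proposal is correct and follows the same route as the paper: restrict $\mb Y$ to the line through $\tau$ via Lemma \ref{lem:basic_props_line_porous}\ref{lemitem:line_porous_restriction}, then invoke Lemma \ref{lem:relationship_porosity_regularity} in dimension one. Your extra bookkeeping (placing $\tau$ inside a radius-$R$ interval and noting that $L=\lceil\nu^{-1}\rceil$ depends only on $\nu$) just makes explicit what the paper leaves implicit.
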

\begin{proof}
Let $\tau$ lie on the line $\ell$. By Lemma \ref{lem:basic_props_line_porous}\ref{lemitem:line_porous_restriction}, $\mb Y|_{\ell}$ is $\nu$-porous. By Lemma \ref{lem:relationship_porosity_regularity} in $d = 1$ we obtain the result. 
\end{proof}
\begin{remark*}
We can take $\gamma \geq c \frac{\nu}{|\log \nu|}$.
\end{remark*}

\subsection{The Paley--Wiener criterion}\label{subsec:PaleyWienerProof}
We sketch a proof of Theorem \ref{thm:paley_wiener}, the Paley--Wiener criterion for functions with bounded Fourier support. See \cite{HormanderVol1}*{Theorem 7.3.1} for a full proof. 

Suppose $f \in L^2(\R^d)$ and $\supp \hat f \subset \B_{\sigma/2\pi}$. Then we have 
\begin{equation*}
    f(\mb x) = \int_{|\xi| \leq \sigma/2\pi} \hat f(\xi) e^{2\pi i\, \mb x\cdot \xi}\, d\xi. 
\end{equation*}
We can define
\begin{equation*}
    \tilde f(\mb z) = \int_{|\xi| \leq \sigma/2\pi} \hat f(\xi) e^{2\pi i\, \mb z\cdot \xi}\, d\xi. 
\end{equation*}
for any $\mb z \in \C^d$, and we find
\begin{equation*}
    \tilde f(\mb z) \leq C_{\sigma,d}\| f\|_2 \sup_{|\xi| \leq \sigma/2\pi} e^{2\pi i\, \mb z \cdot \xi} = C_{\sigma,d} \| f \|_2\, e^{\sigma |\Im \mb z|}
\end{equation*}
as desired. 

Now suppose $\tilde f$ is analytic on $\C^d$, $f = \tilde f|_{\R^d}$ is a Schwartz function, and
\begin{equation*}
    \tilde f(\mb x + i \mb y) \leq C e^{\sigma |\mb y|}.
\end{equation*}
We have
\begin{equation*}
    \hat f(\xi) = \int_{\R^d} f(\mb x) e^{-2\pi i\, \mb x\cdot \xi}\, d\mb x.
\end{equation*}
Fix $\xi$ and suppose $|\xi| > \sigma/2\pi$.
Let 
\begin{align*}
    G_{\mb x_0, \mbhat \xi}(z) &= \tilde f(\mb x_0 + z\mbhat \xi) e^{-2\pi i\, (\mb x_0 + z\hat \xi) \cdot \xi}  
\end{align*}
Notice $G_{\mb x_0, \mbhat \xi}(z)$ is an analytic function of $z$, and because $|\tilde f(\mb x_0 + a\hat \xi + ib \hat \xi)| \leq Ce^{\sigma |b|}$, we have 
\begin{equation*}
    G_{\mb x_0, \mbhat \xi}(a-ib) \leq C e^{-(2\pi |\xi|-\sigma)b}
\end{equation*}
and so $|G(z)|$ exponentially decays as $\Im z \to -\infty$. By contour integration,
\begin{equation*}
     \int_{-\infty}^{\infty} G_{\mb x_0, \mbhat \xi}(t)\, dt = \lim_{b \to \infty} \int_{-\infty}^{\infty} G_{\mb x_0, \mbhat \xi}(t - ib)\, dt = 0.
\end{equation*}
We write a general $\mb x \in \R^d$ as $\mb x_0 + t\mbhat \xi$ where $\mb x_0 \in \mb \xi^{\perp}$, and integrate over $\xi^{\perp}$ to conclude that $\hat f(\mb \xi) = 0$. 

\subsection{Hilbert transform of the derivative from integrals over lines}\label{subsec:hilbert_from_second_deriv}

The following Lemma is due to Semyon Dyatlov. 
We prove it directly, but Dyatlov originally discovered it indirectly by proving via distributions that Proposition \ref{prop:Ew_is_psh}\ref{item_Ew_psh:second_deriv} suffices for $E\omega + C|\mb y|$ to be plurisubharmonic.

\begin{lemma}
Let $d \geq 2$, $\omega \in C_0^2(\R^d)$. 
Suppose that for all $\ell = \{\mb x + t \mbhat y\} \subset \R^d$ a line and $\mbhat v \perp \mbhat y$ we have
\begin{equation*}
    \left|\int_{-\infty}^{\infty} \langle (D^2\omega(\mb x + t \mbhat y))\mbhat v, \mbhat v\rangle\, \frac{dt}{\pi}\right| \leq C. 
\end{equation*}
Then for any such line $\ell$, we have
\begin{equation*}
    \| H[\omega|_{\ell}'] \|_{\infty} \leq C. 
\end{equation*}
\end{lemma}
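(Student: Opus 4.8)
The plan is to use the one extra real dimension available when $d\ge 2$ to realize the planar harmonic extension of $\omega|_{\ell}$ along a genuine transverse direction inside $\R^d$, rather than along an imaginary direction. Comparing this harmonic extension with $\omega$ itself on a $2$-plane containing $\ell$ will produce a ``filtered backprojection''-type identity that writes $H[\omega|_{\ell}']$ as a weighted average, over all lines in that plane, of the X-ray transforms of second derivatives of $\omega$ --- exactly the quantities the hypothesis controls. Applying the hypothesis term by term then closes the estimate, and the numerology works out so that the output constant is exactly $C$.

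In detail, I would fix the line $\ell$ and, after translating the parameter, bound $|H[\omega|_{\ell}'](0)|$ for $\ell = \{\mb x_0 + t\mbhat y\}$. Using $d\ge 2$, pick a unit vector $\mbhat v\perp\mbhat y$ and work in coordinates $(a,b)$ on the real $2$-plane $P = \{\mb x_0 + a\mbhat y + b\mbhat v : a,b\in\R\}$, so $\ell = \{b=0\}$ and $\phi := \omega|_{\ell}\in C_0^2(\R)$. Let $E'\phi(a,b)$ be the bounded harmonic extension of $\phi$ to $\{b>0\}$, extended evenly to $\{b<0\}$; by Lemma \ref{lem:normal_derivative_harmonic_extension}, $\partial_b E'\phi(a,0^{\pm}) = \mp H[\phi'](a)$. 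Put $g(a,b) = \omega(\mb x_0 + a\mbhat y + b\mbhat v) - E'\phi(a,b)$ and $\rho(a,b) = \Delta_{(a,b)}\,\omega(\mb x_0 + a\mbhat y + b\mbhat v)$; then $\rho\in C_0^0(P)$, and $g$ is bounded, vanishes on $\ell$ and at infinity, and satisfies $\Delta g = \rho$ on each half-plane $\{b>0\}$, $\{b<0\}$. Representing $g$ there via the Dirichlet Green's functions of the two half-planes and differentiating at the boundary (the inward normal derivative of the Green's function being the Poisson kernel $P_c(s) = \frac{1}{\pi}\frac{c}{s^2+c^2}$) gives
\begin{equation*}
    \partial_b g(a,0^+) = -\int_{b'>0} P_{b'}(a-a')\,\rho(a',b')\,da'\,db', \qquad \partial_b g(a,0^-) = \int_{b'<0} P_{|b'|}(a-a')\,\rho(a',b')\,da'\,db'.
\end{equation*}
Since also $\partial_b g(a,0^{\pm}) = \partial_{\mbhat v}\omega(\mb x_0 + a\mbhat y) \pm H[\phi'](a)$ by the definition of $g$, subtracting cancels the first-order term and yields the key identity
\begin{equation*}
    2\,H[\phi'](a) = -\int_{\R^2} P_{|b'|}(a-a')\,\rho(a',b')\,da'\,db'.
\end{equation*}

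Next I would set $a=0$ and pass to polar coordinates $(a',b') = r(\cos\theta,\sin\theta)$, $r\ge 0$, $\theta\in[0,2\pi)$: since $P_{|b'|}(-a') = \frac{1}{\pi}\frac{|\sin\theta|}{r}$ and $da'\,db' = r\,dr\,d\theta$ the radial singularity cancels, and pairing $\theta$ with $\theta+\pi$ to join opposite rays from $\mb x_0$ into full lines gives
\begin{equation*}
    H[\phi'](0) = -\frac{1}{2\pi}\int_0^{\pi} \sin\theta \int_{-\infty}^{\infty} \rho(\mb x_0 + r\,\mbhat w_\theta)\,dr\,d\theta, \qquad \mbhat w_\theta := \cos\theta\,\mbhat y + \sin\theta\,\mbhat v.
\end{equation*}
On the line through $\mb x_0$ in direction $\mbhat w_\theta$, rotation invariance of the Laplacian on $P$ gives $\rho = \partial_{\mbhat w_\theta}^2\omega + \partial_{\mbhat w_\theta^{\perp}}^2\omega$ with $\mbhat w_\theta^{\perp} := -\sin\theta\,\mbhat y + \cos\theta\,\mbhat v$, and the tangential part integrates to zero over the full line (fundamental theorem of calculus plus compact support), leaving
\begin{equation*}
    \int_{-\infty}^{\infty} \rho(\mb x_0 + r\,\mbhat w_\theta)\,dr = \pi \int_{-\infty}^{\infty} \langle (D^2\omega(\mb x_0 + r\,\mbhat w_\theta))\,\mbhat w_\theta^{\perp}, \mbhat w_\theta^{\perp}\rangle\,\frac{dr}{\pi},
\end{equation*}
whose absolute value is at most $\pi C$ by the hypothesis applied to the line $\{\mb x_0 + r\,\mbhat w_\theta\}$ with perpendicular unit vector $\mbhat w_\theta^{\perp}$. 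Hence $|H[\phi'](0)| \le \frac{1}{2\pi}(\pi C)\int_0^{\pi}\sin\theta\,d\theta = C$, and since $\ell$ and the base point are arbitrary, $\|H[\omega|_{\ell}']\|_{\infty}\le C$.

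The hard part will be the bookkeeping: verifying the standard fact that the solution of $\Delta g = \rho$ on a half-plane with zero boundary data has inward boundary normal derivative obtained by integrating $\rho$ against the Poisson kernel, and tracking all signs and numerical factors so that the final constant lands exactly on $C$ rather than a multiple of it. Everything else --- the $2$-plane reduction, the polar-coordinate manipulation, and the cancellation of the tangential derivative --- is routine.
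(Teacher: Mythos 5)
Your proposal is correct, and it follows the same essential route as the paper: reduce to a $2$-plane containing $\ell$, relate $H[\omega|_{\ell}']$ at a point to the pairing of the Poisson kernel with $\Delta\omega$ on that plane, pass to polar coordinates so that the $1/r$ singularity of the Poisson kernel cancels the Jacobian $r$, and observe that the tangential second derivative integrates to zero over each full line so that only the transverse second derivative (the quantity the hypothesis controls) survives. The one genuine difference is in how the key identity
\begin{equation*}
    2\,H[\omega|_{\ell}'](0) = -\int_{\R^2} P(a',b')\,\Delta\omega(a',b')\,da'\,db'
\end{equation*}
is obtained: the paper invokes it in a single line as a consequence of Lemma \ref{lem:normal_derivative_harmonic_extension}, implicitly treating $\Delta P$ distributionally, while you rederive it from scratch by subtracting the even harmonic extension $E'\phi$ from $\omega|_P$ and representing the difference via the Dirichlet Green's function of each half-plane, then taking the jump of the normal derivative across $\ell$. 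This is more explicit and self-contained; the price is that you must justify the Green's representation (decay of $g$ at infinity, uniqueness of bounded decaying solutions on the half-plane), which you correctly flag as the bookkeeping to be checked. Both versions produce exactly the constant $C$ after the $\int_0^{\pi}\sin\theta\,d\theta = 2$ cancels the factor of $2$ from the jump of the normal derivative.
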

\begin{proof}
It suffices to prove this when $d = 2$ because we can always restrict to a plane containing $\ell$ when $d > 2$. 
Let $P(x,y) = \frac{1}{\pi}\frac{|y|}{x^2+y^2}$ be the Poisson kernel. We have
\begin{align*}
    \Delta P (x,y) = 2\partial_y P(x, 0+)\delta_{\ell_1}
\end{align*}
where $\ell_1 = \{(t, 0)\, :\, t \in \R\}$ is the $x$-axis and the derivative is in the upward pointing normal direction to $\ell_1$. The right hand side is the distribution $\partial_y P(x, 0+)$ on $\R$ pushed forward to the line $\ell_1$. By Lemma \ref{lem:normal_derivative_harmonic_extension}, 
\begin{align*}
    \partial_y P(x, 0+) = -2\frac{d}{dx}H[\delta_0]
\end{align*}
viewed as a distribution on $\R$. Taking adjoints,
\begin{equation*}
    \langle  P, \Delta \omega\rangle = \langle \Delta P, \omega\rangle = -2H[\omega|_{\ell_1}'](0).
\end{equation*}
We have
\begin{equation*}
    \int_{-\infty}^{\infty} \langle (D^2\omega(\mb x + t \mbhat y))\mbhat v, \mbhat v\rangle\, dt = \int_{-\infty}^{\infty} \Delta \omega(\mb x + t \mbhat y)\, dt.
\end{equation*}
In radial coordinates the Poisson kernel is given by $P(r, \theta) = \frac{|\sin \theta|}{\pi |r|}$, so
\begin{align*}
 \int P(x,y)\, \Delta \omega(x,y)\, dxdy &= \int_0^{\pi} \int_{-\infty}^{\infty} P(r,\theta)\, \Delta \omega(r, \theta)\, |r| drd\theta  \\ 
 &= \frac{1}{\pi} \int_0^{\pi} \int_{-\infty}^{\infty} |\sin \theta|\, \Delta \omega(r, \theta)\,  drd\theta  \leq 2C.
\end{align*} 
\end{proof}
A subtle distinction is that we required upper and lower bounds in Proposition \ref{prop:Ew_is_psh}\ref{item_Ew_psh:hilbert} but only lower bounds in \ref{item_Ew_psh:second_deriv}, so \ref{item_Ew_psh:hilbert} does not technically follow from \ref{item_Ew_psh:second_deriv} as stated. We could just as well require upper and lower bounds in \ref{item_Ew_psh:second_deriv}, and then \ref{item_Ew_psh:hilbert} would follow. 

\bibliography{references}

\end{document}